\definecolor{darkgreen}{rgb}{0, 0.5, 0}
\newtheorem{theorem}{Theorem}
\newtheorem{lemma}{Lemma}
\newtheorem{definition}{Definition}
\newtheorem{Ex}{Example}
\newtheorem*{theorem*}{Theorem}
\newtheorem{remark}{Remark}
\newcommand{\ind}{{\rm ind \hspace{.1cm}}}
\begin{document}

\title{Toral posets and the binary spectrum property}

\author[*]{Vincent E. Coll, Jr.}
\author[*]{Nicholas W. Mayers}

\affil[*]{Department of Mathematics, Lehigh University, Bethlehem, PA, 18015}

\maketitle


\begin{abstract}
\noindent
We introduce a family of posets which generate Lie poset subalgebras of $A_{n-1}=\mathfrak{sl}(n)$ whose index
can be realized topologically.  In particular, if $\mathcal{P}$ is such a \textit{toral poset}, then it has a simplicial realization which is homotopic to a wedge sum of $d$ one-spheres, where $d$ is the index of the corresponding type-A Lie poset algebra $\mathfrak{g}_A(\mathcal{P})$. Moreover, when $\mathfrak{g}_A(\mathcal{P})$ is Frobenius, its spectrum is \textit{binary}; that is, consists of an equal number of 0's and 1's. We also find that all Frobenius, type-A Lie poset algebras corresponding to a poset whose largest totally ordered subset is of cardinality at most three have a binary spectrum.
\end{abstract}

\noindent
\textit{Mathematics Subject Classification 2010}: 17B20, 05E15

\noindent 
\textit{Key Words and Phrases}: Frobenius Lie algebra,  poset algebra, spectrum, index

\section{Introduction}

In \textbf{\cite{CG}}, Coll and Gerstenhaber introduce the notion of a ``Lie poset algebra".  These Lie algebras are subalgebras of $A_{n-1}=\mathfrak{sl}(n)$ which lie between the subalgebras of upper-triangular and diagonal matrices.  We refer to such Lie algebras as \textit{type-A Lie poset algebras} and find that they are naturally associated to the incidence algebras of posets \textbf{\cite{Ro}} (see Section~\ref{sec:lieposet}). In \textbf{\cite{CM}}, the current authors establish formulas for the index of type-A Lie poset algebras corresponding to posets whose largest totally ordered subset is of cardinality at most three. The authors further characterize such posets which correspond to type-A Lie poset algebras with index zero. In this article, we initiate an investigation into the form of the spectrum of such Lie algebras.

\bigskip
Formally, the index of a Lie algebra $\mathfrak{g}$ is defined as 
\[\ind \mathfrak{g}=\min_{F\in \mathfrak{g^*}} \dim  (\ker (B_F)),\]

\noindent where $B_F$ is the skew-symmetric \textit{Kirillov form} defined by $B_F(x,y)=F([x,y])$, for all $x,y\in\mathfrak{g}$. Of particular interest are those Lie algebras which have index zero, and are called \textit{Frobenius}.\footnote{Frobenius algebras are of  special interest in deformation and quantum group theory stemming from their connection with the classical Yang-Baxter equation (see \textbf{\cite{G1,G2}}).} A functional $F\in\mathfrak{g}^*$ for which $\dim\ker(B_F)=\ind(\mathfrak{g})=0$ is likewise called \textit{Frobenius}. Given a Frobenius Lie algebra $\mathfrak{g}$ and a Frobenius functional $F\in\mathfrak{g}^*$, the map $\mathfrak{g}\to\mathfrak{g}^*$ defined by $x\mapsto B_F(x,-)$ is an isomorphism. The inverse image of $F$ under this isomorphism, denoted $\widehat{F}$, is called a \textit{principal element} of $\mathfrak{g}$. In \textbf{\cite{Ooms}}, Ooms shows that the eigenvalues (and multiplicities) of $ad(\widehat{F})=[\widehat{F},-]:\mathfrak{g}\to\mathfrak{g}$ do not depend on the choice of principal element $\widehat{F}$ (see also \textbf{\cite{Prin}}). It follows that the spectrum of $ad(\widehat{F})$ is an invariant of $\mathfrak{g}$, which we call the \textit{spectrum} of $\mathfrak{g}$.\footnote{Our investigation into the spectral theory of type-A Lie poset algebras is largely motivated by the corresponding theory for seaweed algebras. \textit{Seaweed} (or \textit{biparabolic}) subalgebras of a complex semi-simple Lie algebra $\mathfrak{g}$ are intersections of two parabolic subalgebras whose sum is $\mathfrak{g}$ (see \textbf{\cite{Joseph,Panyushev1}}). It has been shown that the spectrum of seaweed subalgebras of the classical families of Lie algebras consists of an unbroken sequence of integers where the multiplicities of the eigenvalues form a symmetric distribution about one half (see \textbf{\cite{specD, specAB,unbroken}}).}

In this article, we introduce a family of posets which generate type-A Lie poset algebras whose index can be realized topologically. In particular, if $\mathcal{P}$ is such a ``toral poset" (see Definition~\ref{def:toral}), then it has a simplicial realization which is homotopic to a wedge sum of $\ind\mathfrak{g}_A(\mathcal{P})$ one-spheres (see Theorem~\ref{thm:wedge}). Moreover, when $\mathfrak{g}_A(\mathcal{P})$ is Frobenius, its spectrum is \textit{binary}; that is, consists of an equal number of 0's and 1's (see Theorem~\ref{thm:toralspec}). We also find that all Frobenius, type-A Lie poset algebras corresponding to a poset whose largest totally ordered subset is of cardinality at most three have a binary spectrum (see Theorem~\ref{thm:end}). Extensive calculations suggest that all Frobenius, type-A Lie poset algebras have a binary spectrum.

The structure of the paper is as follows.  In Section~\ref{sec:poset} we set the combinatorial definitions and notation related to posets and in Section~\ref{sec:lieposet} we formally introduce type-A Lie poset algebras.  Section~\ref{sec:indspec} deals with the determination of the form of a principal element for certain Frobenius, type-A Lie poset algebras.  Sections \ref{sec:toralpair}, \ref{sec:toralpos}, and \ref{sec:spec} deal with the main objects of interest: toral posets -- and their associated index and spectral theories. 

\section{Posets}\label{sec:poset}

A \textit{finite poset} $(\mathcal{P}, \preceq_{\mathcal{P}})$ consists of a finite set $\mathcal{P}$ together with a binary relation $\preceq_{\mathcal{P}}$ which is reflexive, anti-symmetric, and transitive. When no confusion will arise, we simply denote a poset $(\mathcal{P}, \preceq_{\mathcal{P}})$ by $\mathcal{P}$, and $\preceq_{\mathcal{P}}$ by $\preceq$. Throughout, we let $\le$ denote the natural ordering on $\mathbb{Z}$. Two posets $\mathcal{P}$ and $\mathcal{Q}$ are \textit{isomorphic} if there exists an order-preserving bijection $\mathcal{P}\to\mathcal{Q}$.

Given a finite poset $\mathcal{P}$, let $Rel(\mathcal{P})$, $Ext(\mathcal{P})$, and $Rel_E(\mathcal{P})$ denote, respectively, 
the set of strict relations between elements of $\mathcal{P}$, the set of minimal and maximal elements of $\mathcal{P}$, and the set of strict relations between elements of $Ext(\mathcal{P})$. If $x,y\in\mathcal{P}$, $x\preceq y$ and there exists no $z\in \mathcal{P}$ satisfying $x,y\neq z$ and $x\preceq z\preceq y$, then $x\preceq y$ is a \textit{covering relation}. Covering relations are used to define a visual representation of $\mathcal{P}$ called the \textit{Hasse diagram} -- a graph whose vertices correspond to elements of $\mathcal{P}$ and whose edges correspond to covering relations (see, for example,  Figure~\ref{fig:Hasse} (a)). Extending the Hasse diagram of $\mathcal{P}$ by allowing all elements of $Rel(\mathcal{P})$ to define edges results in the \textit{comparability graph} of $\mathcal{P}$ (see, for example, Figure~\ref{fig:Hasse} (b)).

A totally ordered subset $S\subset\mathcal{P}$ is called a \textit{chain}. The \textit{height} of $\mathcal{P}$ is one less than the largest cardinality of a chain in $\mathcal{P}$. One can define a simplicial complex $\Sigma(\mathcal{P})$ by having chains of cardinality $n-1$ in $\mathcal{P}$ define the $n$-dimensional faces of $\Sigma(\mathcal{P})$ (see, for example, Figure~\ref{fig:Hasse} (c)). A subset $I\subset\mathcal{P}$ is an \textit{order ideal} if given $y\in\mathcal{P}$ such that there exists $x\in I$ satisfying $y\preceq x$, then $y\in I$. Similarly, a subset $F\subset\mathcal{P}$ is a \textit{filter} if given $y\in\mathcal{P}$ such that there exists $x\in F$ satisfying $x\preceq y$, then $y\in F$.


\begin{Ex}\label{ex:not}
Consider the poset $\mathcal{P}=\{1,2,3,4\}$ with $1\preceq 2\preceq 3,4$, then we have $$Rel(\mathcal{P})=\{1\preceq 2,1\preceq 3,1\preceq 4,2\preceq 3,2\preceq 4\},~~ Ext(\mathcal{P})=\{1,3,4\},~~\text{and}~~ Rel_E(\mathcal{P})=\{1\preceq 3, 1\preceq 4\}.$$

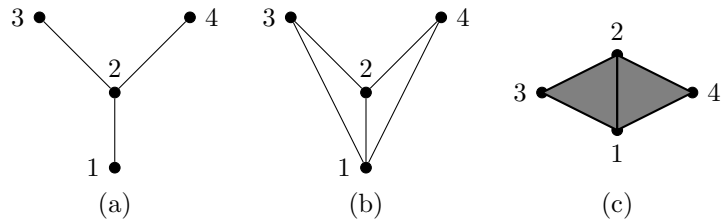
\begin{figure}[H]
$$\begin{tikzpicture}
	\node (1) at (0, 0) [circle, draw = black, fill = black, inner sep = 0.5mm, label=left:{$1$}]{};
	\node (2) at (0, 1)[circle, draw = black, fill = black, inner sep = 0.5mm, label=above:{$2$}] {};
	\node (3) at (-1, 2) [circle, draw = black, fill = black, inner sep = 0.5mm, label=left:{$3$}] {};
	\node (4) at (1, 2) [circle, draw = black, fill = black, inner sep = 0.5mm, label=right:{$4$}] {};
	\node (5) at (0,-0.5) {(a)};
    \draw (1)--(2);
    \draw (2)--(3);
    \draw (2)--(4);
\end{tikzpicture}\quad\begin{tikzpicture}
	\node (1) at (0, 0) [circle, draw = black, fill = black, inner sep = 0.5mm, label=left:{$1$}]{};
	\node (2) at (0, 1)[circle, draw = black, fill = black, inner sep = 0.5mm, label=above:{$2$}] {};
	\node (3) at (-1, 2) [circle, draw = black, fill = black, inner sep = 0.5mm, label=left:{$3$}] {};
	\node (4) at (1, 2) [circle, draw = black, fill = black, inner sep = 0.5mm, label=right:{$4$}] {};
	\node (5) at (0,-0.5) {(b)};
    \draw (1)--(2);
    \draw (2)--(3);
    \draw (2)--(4);
    \draw (1)--(3);
    \draw (1)--(4);
\end{tikzpicture}\quad\begin{tikzpicture}
	\node (1) at (0, 0) [circle, draw = black, fill = black, inner sep = 0.5mm, label=below:{$1$}]{};
	\node (2) at (-1, 0.5)[circle, draw = black, fill = black, inner sep = 0.5mm,label=left:{$3$}] {};
    \node (3) at (0, 1) [circle, draw = black, fill = black, inner sep = 0.5mm,label=above:{$2$}]{};
    \node (4) at (1, 0.5) [circle, draw = black, fill = black, inner sep = 0.5mm,label=right:{$4$}]{};
    \node (5) at (0,-1) {(c)};
    \draw [fill=gray,thick] (0, 0)--(-1, 0.5)--(0, 1)--(0, 0);
    \draw [fill=gray,thick] (0, 0)--(1, 0.5)--(0, 1)--(0, 0);
\end{tikzpicture}$$
\caption{Hasse diagram of $\mathcal{P}$, comparability graph of $\mathcal{P}$, and $\Sigma(\mathcal{P})$}\label{fig:Hasse}
\end{figure}

\noindent
Note that $\{1,2\}\subset \mathcal{P}$ is both an order ideal and a chain of $\mathcal{P}$, but not a filter, while 
$\{3,4\}\subset \mathcal{P}$ is a filter, but neither an order ideal nor a chain. 
\end{Ex}

\section{Lie poset algebras}\label{sec:lieposet}

Let $\mathcal{P}$ be a finite poset. The \textit{associative incidence} (or \textit{poset}) \textit{algebra} $A=A(\mathcal{P}, \textbf{k})$ is the span over $\textbf{k}$ of elements $E_{p_i,p_j}$, for $p_i\preceq p_j$, with multiplication given by setting $E_{p_i,p_j}E_{p_k,p_l}=E_{p_i,p_l}$ if $p_j=p_k$ and $0$ otherwise.  The \textit{trace} of an element $\sum c_{p_i,p_j}E_{p_i,p_j}$ is $\sum c_{p_i,p_i}$.  We can equip $A$ with the commutator bracket $[a,b]=ab-ba$, where juxtaposition denotes the product in $A$, to produce the \textit{Lie poset algebra} $\mathfrak{g}(\mathcal{P})
=\mathfrak{g}(\mathcal{P}, \textbf{k})$. 

\begin{Ex}\label{ex:gstargate}
If $\mathcal{P}$ is the poset of Example~\ref{ex:not}, then $\mathfrak{g}(\mathcal{P})$ is the span over \textup{$\textbf{k}$} of the elements of $$\{E_{1,1},E_{2,2},E_{3,3},E_{4,4},E_{1,2},E_{1,3},E_{1,4},E_{2,3},E_{2,4}\}.$$
\end{Ex}

\noindent
Restricting to the trace-zero elements of $\mathfrak{g}(\mathcal{P})$ results in the \textit{type-A Lie poset algebra} $\mathfrak{g}_A(\mathcal{P})$.

\begin{Ex}
If $\mathfrak{g}(\mathcal{P})$ is as in Example~\ref{ex:gstargate}, then $\mathfrak{g}_A(\mathcal{P})$ is the span over \textup{$\textbf{k}$} of the elements of $$\{E_{1,1}-E_{2,2},E_{2,2}-E_{3,3},E_{3,3}-E_{4,4},E_{1,2},E_{1,3},E_{1,4},E_{2,3},E_{2,4}\}.$$
\end{Ex}

\begin{remark}
Isomorphic posets correspond to isomorphic \textup(type-A\textup) Lie poset algebras.
\end{remark}

\begin{remark}
The definition of type-A Lie poset algebra given in this section can be seen to be equivalent to that found in \textup{\textbf{\cite{CG}}} as follows: if $|\mathcal{P}|=n$, then taking a linear extension of $\mathcal{P}$, i.e., an isomorphism $(\mathcal{P},\preceq_{\mathcal{P}})\to(\{1,\hdots,n\},\le)$, both $A$ and $\mathfrak{g}$ may be regarded as subalgebras of the algebra consisting of all $n \times n$ upper-triangular matrices over \textup{$\textbf{k}$}. Such a matrix representation is realized by replacing each basis element $E_{i,j}$ by the $n\times n$ matrix $M_{i,j}$ containing a 1 in the $i,j$-entry, and 0's elsewhere. The product between elements $E_{i,j}$ is then replaced by matrix multiplication between the $M_{i,j}$. Restricting to $\mathfrak{g}_A(\mathcal{P})$ yields a Lie subalgebra of the first classical family $A_{n-1}=\mathfrak{sl}(n)$ contained between the subalgebras of upper-triangular and diagonal matrices.
\end{remark}

\noindent
We refer to posets $\mathcal{P}$ for which $\mathfrak{g}_A(\mathcal{P})$ is Frobenius as \textit{Frobenius posets}.  

\section{Frobenius functionals and principal elements}\label{sec:indspec}

In this section, we develop a framework for analyzing the spectrum of Frobenius, type-A Lie poset algebras by determining the form of a particular principal element.

Given a finite poset $\mathcal{P}$ and $B=\sum b_{p,q}E_{p,q}\in\mathfrak{g}_A(\mathcal{P})$, define $E_{p,q}^*\in(\mathfrak{g}_A(\mathcal{P}))^*$, for $p,q\in\mathcal{P}$ satisfying $p\preceq q$, by $E^*_{p,q}(B)=b_{p,q}$. From any set $S$ consisting of ordered pairs $(p,q)$ of elements $p,q\in\mathcal{P}$ satisfying $p\preceq q$, i.e., $S\subset Rel(\mathcal{P})$, one can construct both a functional $F_S=\sum_{(p,q)\in S}E_{p,q}^*\in(\mathfrak{g}_A(\mathcal{P}))^*$ as well as a directed subgraph $\Gamma_{F_S}(\mathcal{P})$ of the comparability graph of $\mathcal{P}$. In \textbf{\cite{Prin}}, Gerstenhaber and Giaquinto refer to such a functional as \textit{small} if $\Gamma_{F_S}(\mathcal{P})$ is a spanning subtree of the comparability graph of $\mathcal{P}$. Note, if $F_S\in(\mathfrak{g}_A(\mathcal{P}))^*$ is a small functional, then $\Gamma_{F_S}(\mathcal{P})$ naturally partitions the elements of $\mathcal{P}$ into the following disjoint subsets: 
\begin{itemize}
    \item $U_{F_S}(\mathcal{P})$ consisting of all sinks in $\Gamma_{F_S}(\mathcal{P})$, 
    \item $D_{F_S}(\mathcal{P})$ consisting of all sources  in $\Gamma_{F_S}(\mathcal{P})$, and 
    \item $B_{F_S}(\mathcal{P})$ consisting of those vertices which are neither sinks nor sources in $\Gamma_{F_S}(\mathcal{P})$.
\end{itemize}

\begin{remark}\label{rem:hatalg}
In \textup{\textbf{\cite{Prin}}}, the authors establish a method of calculating $\widehat{F}_S$ for a small, Frobenius functional $F_S$. Their algorithm is equivalent to solving the following system of equations: if $\widehat{F}_S=\sum_{p\in\mathcal{P}}c_{p,p}E_{p,p}$, then
\begin{itemize}
    \item $c_{p,p}-c_{q,q}=1$ for $(p,q)\in S$, and
    \item $\sum_{p\in\mathcal{P}}c_{p,p}=0$.
\end{itemize}
\end{remark}

\noindent
Ongoing, we assume that every functional $F\in(\mathfrak{g}_A(\mathcal{P}))^*$ is of the form $F_S$ for some $S\subset Rel(\mathcal{P})$.

\begin{theorem}\label{thm:spec01}
Let $\mathcal{P}$ be a Frobenius poset. If $F\in(\mathfrak{g}_A(\mathcal{P}))^*$ satisfies the following conditions:
\begin{itemize}
    \item $F$ is small and Frobenius,
    \item $U_{F}(\mathcal{P})$ is a filter of $\mathcal{P}$,
    \item $D_{F}(\mathcal{P})$ is an ideal of $\mathcal{P}$, and
    \item $B_{F}(\mathcal{P})=\emptyset$,
\end{itemize} 
then $\widehat{F}=\sum_{p\in\mathcal{P}}c_{p,p}E_{p,p}$ satisfies 
\[c_{p,p} =  \begin{cases} 
      \frac{|U_{F_S}(\mathcal{P})|}{|\mathcal{P}|}, & p\in D_{F_S}(\mathcal{P}); \\
                                                &                        \\
      \frac{-|D_{F_S}(\mathcal{P})|}{|\mathcal{P}|}, & p\in U_{F_S}(\mathcal{P}).
   \end{cases}
\]
\end{theorem}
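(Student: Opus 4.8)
The plan is to reduce everything to the linear-algebraic description of $\widehat{F}$ recorded in Remark~\ref{rem:hatalg} and then verify that the displayed constants solve the relevant system. Write $F=F_S$. Since $F$ is small and Frobenius, Remark~\ref{rem:hatalg} (that is, the Gerstenhaber--Giaquinto algorithm of \cite{Prin}) tells us $\widehat{F}=\sum_{p\in\mathcal{P}}c_{p,p}E_{p,p}$ is diagonal with the $c_{p,p}$ subject to $c_{p,p}-c_{q,q}=1$ for every $(p,q)\in S$ together with $\sum_{p\in\mathcal{P}}c_{p,p}=0$. First I would observe that this system has a unique solution: $\Gamma_{F}(\mathcal{P})$ is a spanning subtree of the comparability graph of $\mathcal{P}$, hence a connected graph on the vertex set $\mathcal{P}$, so the difference equations determine all the $c_{p,p}$ up to a single global additive constant, which the trace-zero equation then fixes. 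Consequently it suffices to exhibit one diagonal, trace-zero element satisfying the difference equations; by uniqueness it must equal $\widehat{F}$.

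The one structural input needed is a consequence of the hypothesis $B_{F}(\mathcal{P})=\emptyset$: every vertex of $\Gamma_{F}(\mathcal{P})$ is then a source or a sink, and since $\Gamma_{F}(\mathcal{P})$ is a tree with no isolated vertices these alternatives are exclusive, so $\mathcal{P}=D_{F}(\mathcal{P})\sqcup U_{F}(\mathcal{P})$. Moreover, for any relation $(p,q)\in S$ the vertex $p$ has an outgoing edge, so $p\notin U_{F}(\mathcal{P})$, i.e.\ $p\in D_{F}(\mathcal{P})$; while $q$ has an incoming edge, so $q\notin D_{F}(\mathcal{P})$, i.e.\ $q\in U_{F}(\mathcal{P})$. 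Thus $\Gamma_{F}(\mathcal{P})$ is bipartite with parts $D_{F}(\mathcal{P})$ and $U_{F}(\mathcal{P})$, and every edge of $S$ runs from a source to a sink.

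With this in hand the verification is immediate. Assigning $c_{p,p}=|U_{F}(\mathcal{P})|/|\mathcal{P}|$ to each $p\in D_{F}(\mathcal{P})$ and $c_{p,p}=-|D_{F}(\mathcal{P})|/|\mathcal{P}|$ to each $p\in U_{F}(\mathcal{P})$, any $(p,q)\in S$ has $p$ a source and $q$ a sink, so $c_{p,p}-c_{q,q}=\big(|U_{F}(\mathcal{P})|+|D_{F}(\mathcal{P})|\big)/|\mathcal{P}|=|\mathcal{P}|/|\mathcal{P}|=1$ since $D_{F}(\mathcal{P})$ and $U_{F}(\mathcal{P})$ partition $\mathcal{P}$; and $\sum_{p\in\mathcal{P}}c_{p,p}=|D_{F}(\mathcal{P})|\,|U_{F}(\mathcal{P})|/|\mathcal{P}|-|U_{F}(\mathcal{P})|\,|D_{F}(\mathcal{P})|/|\mathcal{P}|=0$. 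Both equations hold, so by the uniqueness noted above these constants are the diagonal entries of $\widehat{F}$, which is the assertion.

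As for difficulty, there is essentially no obstacle once Remark~\ref{rem:hatalg} is available, since the substantive work is packaged into the cited result of \cite{Prin}; the proof is a one-line check after the bipartition observation. The only points deserving a moment's care are (i) justifying that the difference-plus-trace system has a unique solution, which is exactly where connectedness of the spanning tree $\Gamma_{F}(\mathcal{P})$ is used, and (ii) recognizing that the hypothesis $B_{F}(\mathcal{P})=\emptyset$ is precisely what forces each relation of $S$ to join $D_{F}(\mathcal{P})$ to $U_{F}(\mathcal{P})$. I would also note that the hypotheses that $U_{F}(\mathcal{P})$ is a filter and $D_{F}(\mathcal{P})$ is an ideal are not actually invoked in this computation; presumably they are included because the functionals arising in the later sections satisfy them and because they are needed for the subsequent spectral results.
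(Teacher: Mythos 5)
Your argument is correct, and it leans on the same two pillars as the paper's proof -- the linear system of Remark~\ref{rem:hatalg} and the fact that $B_F(\mathcal{P})=\emptyset$ forces every vertex of the spanning tree $\Gamma_F(\mathcal{P})$ to be a source or a sink -- but it organizes them differently. The paper solves the system directly: starting from a relation $(p_1,p_n)\in S$, it propagates the difference equations along paths of $\Gamma_F(\mathcal{P})$, using the forced alternation $\preceq,\succeq,\preceq,\dots$ to show that $c_{p,p}$ is constant on $D_F(\mathcal{P})$ and on $U_F(\mathcal{P})$ with the two constants differing by $1$, and then pins the constant with the trace-zero condition. You instead note that connectivity of the spanning tree makes the difference-plus-trace system have a unique solution, observe that every edge of $S$ runs from $D_F(\mathcal{P})$ to $U_F(\mathcal{P})$ (so $\mathcal{P}=D_F(\mathcal{P})\sqcup U_F(\mathcal{P})$), and verify that the claimed constants satisfy both equations. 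The content is the same -- your uniqueness claim is exactly the path-propagation argument in abstract form -- but your version trades the explicit chain of equations for a cleaner guess-and-check, which is slightly shorter and makes the role of each hypothesis more transparent. Your closing observation is also accurate and consistent with the paper: the filter/ideal conditions on $U_F(\mathcal{P})$ and $D_F(\mathcal{P})$ are not used in this computation (nor, in fact, in the paper's own proof); they matter for the surrounding framework and the later spectral arguments.
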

\begin{proof}
Assume $F=F_S$ for $S\subset Rel(\mathcal{P})$. To determine the form of $\widehat{F}_S$, we use the system of equations given in Remark~\ref{rem:hatalg}.

Let $p_1\in D_{F_S}(\mathcal{P})$ and $p_n\in U_{F_S}(\mathcal{P})$ with $(p_1,p_n)\in S$, i.e., $c_{p_1,p_1}-1=c_{p_n,p_n}$. Since $\Gamma_{F_S}(\mathcal{P})$ is connected, given $p\in D_{F_S}(\mathcal{P})$, there exists a path from $p_1$ to $p$ in $\Gamma_{F_S}(\mathcal{P})$. Assume that such a path is defined by the following sequence of vertices of $\Gamma_{F_S}$: $p_1=p_{i_0},p_{i_1},p_{i_2},\hdots,p_{i_{n-1}},p_{i_n}=p$. By our assumption that $B_{F_S}=\emptyset$, we must have $$p_1=p_{i_0}\preceq p_{i_1}\succeq p_{i_2}\preceq \hdots\preceq p_{i_{n-1}}\succeq p_{i_n}=p,$$ so that $$c_{p_{i_0},p_{i_0}}-c_{p_{i_1},p_{i_1}}=1$$ $$c_{p_{i_2},p_{i_2}}-c_{p_{i_1},p_{i_1}}=1$$ $$c_{p_{i_2},p_{i_2}}-c_{p_{i_3},p_{i_3}}=1$$ $$c_{p_{i_4},p_{i_4}}-c_{p_{i_3},p_{i_3}}=1$$ $$\vdots$$ $$c_{p_{i_n},p_{i_n}}-c_{p_{i_{n-1}},p_{i_{n-1}}}=1.$$  Solving the above equations, we find that $$c_{p_1,p_1}=c_{p_{i_0},p_{i_0}}=c_{p_{i_1},p_{i_1}}+1=c_{p_{i_2},p_{i_2}}=\hdots=c_{p_{i_{n-1}},p_{i_{n-1}}}+1=c_{p,p};$$ that is, $c_{p_1,p_1}=c_{p,p}$ for all $p\in D_{F_S}(\mathcal{P})$. Similarly, we find that $c_{p_n,p_n}=c_{p,p}$ for all $p\in U_{F_S}(\mathcal{P})$. Thus, $c_{p,p}=c_{p_1,p_1}-1$ for all $p\in U_{F_S}(\mathcal{P})$ and the condition $\sum_{p\in\mathcal{P}} c_{p,p}=0$ becomes $|\mathcal{P}|c_{p_1,p_1}=|U_{F_S}(\mathcal{P})|$. Therefore, $c_{p_1,p_1}=\frac{|U_{F_S}(\mathcal{P})|}{|\mathcal{P}|}$ and

\[c_{p,p} =  \begin{cases} 
      \frac{|U_{F_S}(\mathcal{P})|}{|\mathcal{P}|}, & p\in D_{F_S}(\mathcal{P}); \\
                                                &                        \\
      \frac{-|D_{F_S}(\mathcal{P})|}{|\mathcal{P}|}, & p\in U_{F_S}(\mathcal{P}).
   \end{cases}
\]
\end{proof}

\begin{remark}\label{rem:binbasis}
If $F\in(\mathfrak{g}_A(\mathcal{P}))^*$ satisfies the conditions of Theorem~\ref{thm:spec01}, then one obtains a canonical choice of basis for $\mathfrak{g}_A(\mathcal{P})$:
$$\mathscr{B}_{\mathcal{P},F}=\{E_{p,q}~|~p,q\in\mathcal{P},p\preceq_{\mathcal{P}}q\}\cup\{E_{p,p}-E_{q,q}~|~E^*_{p,q}\text{ is a summand of }F\}.
$$ This basis will prove useful in the analysis of the spectrum of Frobenius, type-A Lie poset algebras.
\end{remark}

\noindent
The following result is an immediate corollary to Theorem~\ref{thm:spec01}.

\begin{theorem}\label{thm:01}
If $\mathcal{P}$ is a Frobenius poset and $F\in(\mathfrak{g}_A(\mathcal{P}))^*$ satisfies the conditions of Theorem~\ref{thm:spec01}, then the spectrum of $\mathfrak{g}_A(P)$ consists of 0's and 1's.
\end{theorem}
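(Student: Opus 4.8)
The plan is to diagonalize $\mathrm{ad}(\widehat{F})$ directly in the canonical basis $\mathscr{B}_{\mathcal{P},F}$ of Remark~\ref{rem:binbasis}. Since $F$ satisfies the hypotheses of Theorem~\ref{thm:spec01}, that theorem supplies the explicit diagonal form of a principal element $\widehat{F}=\sum_{p\in\mathcal{P}}c_{p,p}E_{p,p}$, and by the theorem of Ooms mentioned in the introduction the spectrum of $\mathfrak{g}_A(\mathcal{P})$ is the multiset of eigenvalues of $\mathrm{ad}(\widehat{F})$, independent of the choice of principal element. So it suffices to compute these eigenvalues.

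First I would observe that every element of $\mathscr{B}_{\mathcal{P},F}$ is an eigenvector of $\mathrm{ad}(\widehat{F})$. For a basis element $E_{p,p}-E_{q,q}$ this is immediate, since $\widehat{F}$ is diagonal and hence commutes with every diagonal matrix, giving eigenvalue $0$. For an off-diagonal basis element $E_{p,q}$ with $(p,q)\in Rel(\mathcal{P})$, the standard matrix-unit computation gives $[\widehat{F},E_{p,q}]=(c_{p,p}-c_{q,q})E_{p,q}$, so $E_{p,q}$ is an eigenvector with eigenvalue $c_{p,p}-c_{q,q}$. Thus the spectrum is the multiset consisting of $|\mathcal{P}|-1$ zeros together with the numbers $c_{p,p}-c_{q,q}$ over $(p,q)\in Rel(\mathcal{P})$, and the claim reduces to showing each such difference lies in $\{0,1\}$.

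Next I would invoke the structural hypotheses on $F$. Because $B_F(\mathcal{P})=\emptyset$, we have the disjoint decomposition $\mathcal{P}=D_F(\mathcal{P})\sqcup U_F(\mathcal{P})$, and Theorem~\ref{thm:spec01} records $c_{p,p}=|U_F(\mathcal{P})|/|\mathcal{P}|$ for $p\in D_F(\mathcal{P})$ and $c_{p,p}=-|D_F(\mathcal{P})|/|\mathcal{P}|$ for $p\in U_F(\mathcal{P})$. Now fix $(p,q)\in Rel(\mathcal{P})$. Since $D_F(\mathcal{P})$ is an ideal, $q\in D_F(\mathcal{P})$ forces $p\in D_F(\mathcal{P})$; since $U_F(\mathcal{P})$ is a filter, $p\in U_F(\mathcal{P})$ forces $q\in U_F(\mathcal{P})$. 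Hence only three cases arise: $p,q\in D_F(\mathcal{P})$, or $p,q\in U_F(\mathcal{P})$, or $p\in D_F(\mathcal{P})$ and $q\in U_F(\mathcal{P})$. In the first two cases $c_{p,p}-c_{q,q}=0$, and in the last case $c_{p,p}-c_{q,q}=(|U_F(\mathcal{P})|+|D_F(\mathcal{P})|)/|\mathcal{P}|=1$. Therefore the spectrum consists only of $0$'s and $1$'s.

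I do not expect a genuine obstacle here, as the result is flagged as an immediate corollary; the only points requiring a line of care are the verification that $\mathscr{B}_{\mathcal{P},F}$ is indeed a basis (the count $\dim\mathfrak{g}_A(\mathcal{P})=|Rel(\mathcal{P})|+|\mathcal{P}|-1$ matches the $|Rel(\mathcal{P})|$ off-diagonal matrix units together with the $|\mathcal{P}|-1$ differences indexed by the edges of the spanning tree $\Gamma_{F}(\mathcal{P})$), and the confirmation that the $D_F/U_F$ case analysis above is exhaustive over all comparable pairs, so that no eigenvalue outside $\{0,1\}$ is overlooked.
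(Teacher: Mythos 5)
Your proposal is correct and follows essentially the same route as the paper: compute $[\widehat{F},x]$ for $x$ in the canonical basis $\mathscr{B}_{\mathcal{P},F}$, getting eigenvalue $0$ on the diagonal-difference elements and running the same three-case analysis ($p,q\in D_F$, $p,q\in U_F$, or $p\in D_F$, $q\in U_F$) on the matrix units $E_{p,q}$ using the values of $c_{p,p}$ from Theorem~\ref{thm:spec01}. Your explicit remark that the filter/ideal hypotheses rule out the case $p\in U_F(\mathcal{P})$, $q\in D_F(\mathcal{P})$ is a small point the paper leaves implicit, but it is the same argument.
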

\begin{proof}
To determine the spectrum of $\mathfrak{g}_A(\mathcal{P})$, we calculate the values $[\widehat{F},x]$, for $x\in\mathscr{B}_{\mathcal{P},F}$.
To start, for $x\in\{E_{p,p}-E_{q,q}~|~E^*_{p,q}\text{ is a summand of }F\}\subset\mathscr{B}_{\mathcal{P},F}$, we must have $[\widehat{F},x]=0\cdot x$. It remains to consider basis elements of the form $E_{p,q}\in \mathscr{B}_{\mathcal{P},F}$. The analysis of such basis elements breaks into three cases:
\\*

\noindent
\textbf{Case 1:} if $p,q\in U_{F}(\mathcal{P})$, then $$[\widehat{F},E_{p,q}]=\bigg(\frac{-|D_{F}(\mathcal{P})|}{|\mathcal{P}|}-\bigg(\frac{-|D_{F}(\mathcal{P})|}{|\mathcal{P}|}\bigg)\bigg)\cdot E_{p,q}=0\cdot E_{p,q}.$$

\noindent
\textbf{Case 2:} if $p,q\in D_{F}(\mathcal{P})$, then $$[\widehat{F},E_{p,q}]=\bigg(\frac{|U_{F}(\mathcal{P})|}{|\mathcal{P}|}-\frac{|U_{F}(\mathcal{P})|}{|\mathcal{P}|}\bigg)\cdot E_{p,q}=0\cdot E_{p,q}.$$ 

\noindent
\textbf{Case 3:} if $p\in D_{F}(\mathcal{P})$ and $q\in U_{F}(\mathcal{P})$, then $$[\widehat{F},E_{p,q}]=\bigg(\frac{|U_{F}(\mathcal{P})|}{|\mathcal{P}|}-\bigg(\frac{-|D_{F}(\mathcal{P})|}{|\mathcal{P}|}\bigg)\bigg)\cdot E_{p,q}=\bigg(\frac{|U_{F}(\mathcal{P})|+|D_{F}(\mathcal{P})|}{|\mathcal{P}|}\bigg)\cdot E_{p,q}=1\cdot E_{p,q}.$$ Thus, as $\mathscr{B}_{\mathcal{P},F}$ forms a basis for $\mathfrak{g}_A(\mathcal{P})$, the spectrum of $\mathfrak{g}_A(\mathcal{P})$ consists of 0's and 1's.
\end{proof}

\begin{remark}
Note that Theorem~\ref{thm:01} only provides information about the spectrum of $\widehat{F}$, but not the multiplicities of the eigenvalues.
\end{remark}

\noindent
In the next section, numerous examples of Frobenius posets will be given for which there exists a corresponding Frobenius functional with the properties listed in Theorems~\ref{thm:spec01} and~\ref{thm:01}.

\section{Toral-pairs}\label{sec:toralpair}
In this section, we introduce the notion of a \textit{toral-pair}, consisting of a Frobenius poset together with a certain Frobenius functional, and we give numerous examples of such pairs. The posets of such pairs will form the ``building blocks" used to construct the main objects of interest in this paper, toral posets.

\begin{definition}\label{def:toralpair}
Given a Frobenius poset $\mathcal{P}$ and a corresponding Frobenius functional $F\in(\mathfrak{g}_A(\mathcal{P}))^*$, we call $(\mathcal{P},F)$ a toral-pair if $\mathcal{P}$ satisfies
\begin{enumerate}[label={\textup{(\bfseries P\arabic*)}}]
    \item $|Ext(\mathcal{P})|=2$ or 3,
    \item $\mathfrak{g}_A(\mathcal{P})$ has a binary spectrum, and
    \item $\Sigma(\mathcal{P})$ is contractible,
\end{enumerate}
and $F$ satisfies
\begin{enumerate}[label={\textup{(\bfseries F\arabic*)}}]
    \item $F$ is small,
     \item $U_{F}(\mathcal{P})$ is a filter of $\mathcal{P}$, $D_{F}(\mathcal{P})$ is an ideal of $\mathcal{P}$, and $B_{F}(\mathcal{P})=\emptyset$,
    \item $\Gamma_{F}$ contains all edges between elements of $Ext(\mathcal{P})$, and
    \item $B\in\mathfrak{g}(\mathcal{P})\cap\ker(F)$ satisfies $E^*_{p,p}(B)=E^*_{q,q}(B)$, for all $p,q\in\mathcal{P}$, and $E^*_{p,q}(B)=0$, for all $p,q\in\mathcal{P}$ satisfying $p\preceq q$.
\end{enumerate}
\end{definition}

\begin{Ex}\label{ex:toralpairs}
The posets illustrated in Figure~\ref{fig:bb} can be paired with an appropriate functional to form a toral-pair \textup(see Theorems~\ref{thm:h1bb},~\ref{thm:h2bb}, and~\ref{thm:h3bb}\textup).

\begin{figure}[H]
$$\begin{tikzpicture}[scale=0.7]
\node (v1) at (0,-0.5) [circle, draw = black, fill = black, inner sep = 0.5mm, label=left:{$p_1$}] {};
\node (v2) at (0,0.5) [circle, draw = black, fill = black, inner sep = 0.5mm, label=left:{$p_2$}] {};
\draw (v1) -- (v2);
\node at (0,-1.5) {$\mathcal{P}_1$};
\end{tikzpicture}\begin{tikzpicture}[scale=0.65]
	\node (1) at (0, 0) [circle, draw = black, fill = black, inner sep = 0.5mm, label=left:{$p_1$}]{};
	\node (2) at (0, 1)[circle, draw = black, fill = black, inner sep = 0.5mm, label=left:{$p_2$}] {};
	\node (3) at (-0.5, 2) [circle, draw = black, fill = black, inner sep = 0.5mm, label=left:{$p_3$}] {};
	\node (4) at (0.5, 2) [circle, draw = black, fill = black, inner sep = 0.5mm, label=right:{$p_4$}] {};
    \draw (1)--(2);
    \draw (2)--(3);
    \draw (2)--(4);
    \node at (0,-1) {$\mathcal{P}_2$};
\end{tikzpicture}
\begin{tikzpicture}[scale=0.65]
\node (v1) at (1.5,0) [circle, draw = black, fill = black, inner sep = 0.5mm, label=left:{$p_1$}] {};
\node (v4) at (2.5,0) [circle, draw = black, fill = black, inner sep = 0.5mm, label=right:{$p_2$}] {};
\node (v2) at (2,1) [circle, draw = black, fill = black, inner sep = 0.5mm, label=right:{$p_3$}] {};
\node (v3) at (2,2) [circle, draw = black, fill = black, inner sep = 0.5mm, label=right:{$p_4$}] {};
\draw (v1) -- (v2) -- (v3);
\draw (v2) -- (v4);
\node at (2,-1) {$\mathcal{P}^*_2$};
\end{tikzpicture}\begin{tikzpicture}[scale=0.65]
\node (v4) at (-2,3) [circle, draw = black, fill = black, inner sep = 0.5mm, label=left:{$p_5$}] {};
\node (v6) at (-0.5,3) [circle, draw = black, fill = black, inner sep = 0.5mm, label=right:{$p_6$}] {};
\node (v5) at (-0.5,2) [circle, draw = black, fill = black, inner sep = 0.5mm, label=right:{$p_4$}] {};
\node (v3) at (-2,2) [circle, draw = black, fill = black, inner sep = 0.5mm, label=left:{$p_3$}] {};
\node (v2) at (-1.25,1) [circle, draw = black, fill = black, inner sep = 0.5mm, label=left:{$p_2$}] {};
\node (v1) at (-1.25,0) [circle, draw = black, fill = black, inner sep = 0.5mm, label=left:{$p_1$}] {};
\draw (v1) -- (v2) -- (v3) -- (v4);
\draw (v2) -- (v5) -- (v6);
\node at (-1.25,-1) {$\mathcal{P}_3$};
\end{tikzpicture}
\begin{tikzpicture}[scale=0.65]
\node (v7) at (1,0)  [circle, draw = black, fill = black, inner sep = 0.5mm, label=left:{$p_1$}] {};
\node (v11) at (2.5,0)  [circle, draw = black, fill = black, inner sep = 0.5mm, label=right:{$p_2$}] {};
\node (v12) at (2.5,1)  [circle, draw = black, fill = black, inner sep = 0.5mm, label=right:{$p_4$}] {};
\node (v8) at (1,1)  [circle, draw = black, fill = black, inner sep = 0.5mm, label=left:{$p_3$}] {};
\node (v9) at (1.75,2)  [circle, draw = black, fill = black, inner sep = 0.5mm, label=right:{$p_5$}] {};
\node (v10) at (1.75,3)  [circle, draw = black, fill = black, inner sep = 0.5mm, label=right:{$p_6$}] {};
\draw (v7) -- (v8) -- (v9) -- (v10);
\draw (v11) -- (v12) -- (v9);
\node at (1.75,-1) {$\mathcal{P}^*_3$};
\end{tikzpicture}$$ $$\begin{tikzpicture}[scale=0.8]
\node [circle, draw = black, fill = black, inner sep = 0.5mm, label=right:{$p_1$}] (v13) at (9,0.5) {};
\node [circle, draw = black, fill = black, inner sep = 0.5mm, label=right:{$p_{\lfloor\frac{n}{2}\rfloor-1}$}] (v15) at (9,1.25) {};
\node [circle, draw = black, fill = black, inner sep = 0.5mm, label=right:{$p_{\lfloor\frac{n}{2}\rfloor}$}] (v19) at (9,2) {};
\node [circle, draw = black, fill = black, inner sep = 0.5mm, label=right:{$p_{\lfloor\frac{n}{2}\rfloor+1}$}] (v16) at (9,2.75) {};
\node [circle, draw = black, fill = black, inner sep = 0.5mm, label=right:{$p_{n-1}$}] (v18) at (9,3.5) {};
\draw (v15) -- (v16) -- cycle;
\node [circle, draw = black, fill = black, inner sep = 0.5mm, label=left:{$p_n$}] (v20) at (8.5,2.75) {};
\draw (v19) -- (v20);
\node at (9,3.25) {$\vdots$};
\node at (9,1) {$\vdots$};
\node at (9,-0.5) {$\mathcal{P}_{4,n}$};
\end{tikzpicture}\quad\begin{tikzpicture}[scale=0.8]
\node [circle, draw = black, fill = black, inner sep = 0.5mm, label=right:{$p_1$}] (v13) at (11,0.5) {};
\node [circle, draw = black, fill = black, inner sep = 0.5mm, label=right:{$p_{\lfloor\frac{n-1}{2}\rfloor}$}] (v15) at (11,1.25) {};
\node [circle, draw = black, fill = black, inner sep = 0.5mm, label=right:{$p_{\lfloor\frac{n-1}{2}\rfloor+2}$}] (v19) at (11,2) {};
\node [circle, draw = black, fill = black, inner sep = 0.5mm, label=right:{$p_{\lfloor\frac{n-1}{2}\rfloor+3}$}] (v16) at (11,2.75) {};
\node [circle, draw = black, fill = black, inner sep = 0.5mm, label=right:{$p_{n}$}] (v18) at (11,3.5) {};
\draw (v15) -- (v16) -- cycle;
\node [circle, draw = black, fill = black, inner sep = 0.5mm, label=left:{$p_{\lfloor\frac{n-1}{2}\rfloor+1}$}] (v20) at (10.5,1.25) {};
\draw (v19) -- (v20);
\node at (11,3.25) {$\vdots$};
\node at (11,1) {$\vdots$};
\node at (11,-0.5) {$\mathcal{P}^*_{4,n}$};
\end{tikzpicture}\quad
\begin{tikzpicture}
	\node (1) at (0, 0) [circle, draw = black, fill = black, inner sep = 0.5mm, label=left:{$p_1$}]{};
	\node (2) at (-0.5, 0.5)[circle, draw = black, fill = black, inner sep = 0.5mm, label=left:{$p_2$}] {};
	\node (3) at (0.5, 0.5) [circle, draw = black, fill = black, inner sep = 0.5mm, label=right:{$p_3$}] {};
    \node (4) at (-0.5, 1) [circle, draw = black, fill = black, inner sep = 0.5mm, label=left:{$p_4$}] {};
    \node (5) at (0.5, 1) [circle, draw = black, fill = black, inner sep = 0.5mm, label=right:{$p_5$}] {};
    \node (6) at (0, 1.5) {$\vdots$};
    \node (7) at (-0.5, 2)[circle, draw = black, fill = black, inner sep = 0.5mm, label=left:{$p_{2n-2}$}] {};
	\node (8) at (0.5, 2) [circle, draw = black, fill = black, inner sep = 0.5mm, label=right:{$p_{2n-1}$}] {};
    \node (9) at (-0.5, 2.5) [circle, draw = black, fill = black, inner sep = 0.5mm, label=left:{$p_{2n}$}] {};
    \node (10) at (0.5, 2.5) [circle, draw = black, fill = black, inner sep = 0.5mm, label=right:{$p_{2n+1}$}] {};
    \node at (0,-0.75) {$\mathcal{P}_{5,n}$};
    \draw (4)--(3)--(1)--(2)--(5);
    \draw (5)--(3);
    \draw (2)--(4);
    \draw (7)--(9)--(8);
    \draw (7)--(10)--(8);
\end{tikzpicture}\quad\begin{tikzpicture}
	\node (1) at (0, 2.5) [circle, draw = black, fill = black, inner sep = 0.5mm, label=left:{$p_{2n+1}$}]{};
	\node (2) at (-0.5, 2)[circle, draw = black, fill = black, inner sep = 0.5mm, label=left:{$p_{2n-1}$}] {};
	\node (3) at (0.5, 2) [circle, draw = black, fill = black, inner sep = 0.5mm, label=right:{$p_{2n}$}] {};
    \node (4) at (-0.5, 1.5) [circle, draw = black, fill = black, inner sep = 0.5mm, label=left:{$p_{2n-3}$}] {};
    \node (5) at (0.5, 1.5) [circle, draw = black, fill = black, inner sep = 0.5mm, label=right:{$p_{2n-2}$}] {};
    \node (6) at (0, 1) {$\vdots$};
    \node (7) at (-0.5, 0.5)[circle, draw = black, fill = black, inner sep = 0.5mm, label=left:{$p_{3}$}] {};
	\node (8) at (0.5, 0.5) [circle, draw = black, fill = black, inner sep = 0.5mm, label=right:{$p_{4}$}] {};
    \node (9) at (-0.5, 0) [circle, draw = black, fill = black, inner sep = 0.5mm, label=left:{$p_{1}$}] {};
    \node (10) at (0.5, 0) [circle, draw = black, fill = black, inner sep = 0.5mm, label=right:{$p_{2}$}] {};
    \node at (0,-0.75) {$\mathcal{P}^*_{5,n}$};
    \draw (4)--(3)--(1)--(2)--(5);
    \draw (5)--(3);
    \draw (2)--(4);
    \draw (7)--(9)--(8);
    \draw (7)--(10)--(8);
\end{tikzpicture}$$
\caption{Posets of toral-pairs}\label{fig:bb}
\end{figure}
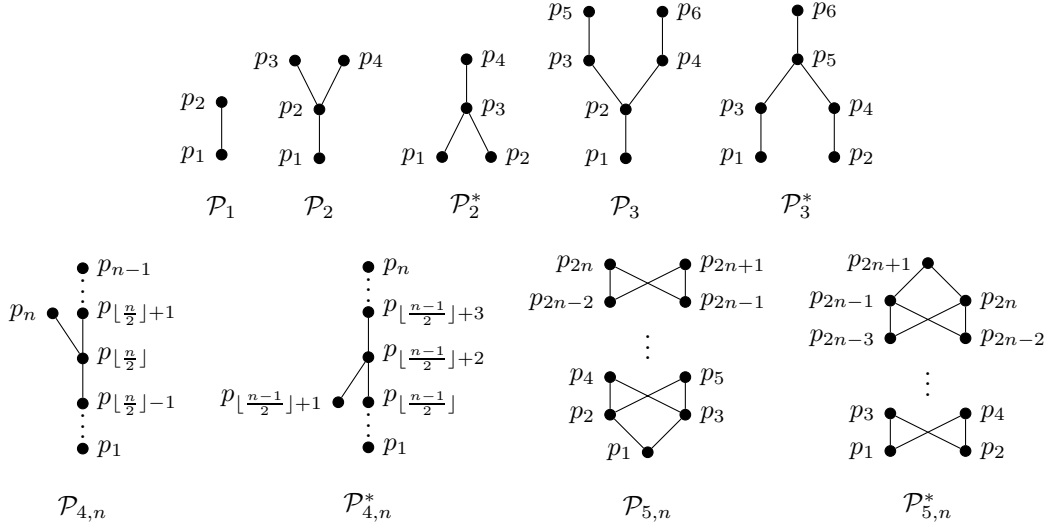
\end{Ex}

\begin{remark}\label{rem:proofheur}  
We can show that a functional $F\in\mathfrak{g}^*$ is Frobenius in the following way. Let $\{x_1,\hdots,x_n\}$ be a vector space basis for $\mathfrak{g}$ and let $B=\sum_{i=1}^nb_jx_j\in\mathfrak{g}\cap \ker(F)$. Now, determine the restrictions $F([x_i,B])=0$ places on the $b_j$, for $i=1,\hdots,n$ and $j=1,\hdots,n$. Finally, show that these restrictions imply that  $B$ is the zero matrix; that is, $$\dim\ker(F)=0\ge\ind(\mathfrak{g})\ge 0.$$ 
\end{remark}

\begin{remark}
Given a poset $\mathcal{P}=\{1,\hdots,n\}$ and a functional $F\in(\mathfrak{g}_A(\mathcal{P}))^*$, it is shown in Appendix A that to determine $\dim\ker(F)$ it suffices to consider the following system of equations:
\begin{itemize}
    \item $F([E_{p_i,p_i},B])=0$, for $p_i\in\mathcal{P}$, and
    \item $F([E_{p_i,p_j},B])=0$, for $p_i,p_j\in\mathcal{P}$ satisfying $p_i\preceq p_j$.
\end{itemize}
\end{remark}

\begin{theorem}\label{thm:h1bb}
If $\mathcal{P}_1=\{p_1,p_2\}$ with $p_1\preceq p_2$ and $F_{\mathcal{P}_1}=E^*_{p_1,p_2}$, then $(\mathcal{P}_1,F_{\mathcal{P}_1})$ forms a toral-pair.
\end{theorem}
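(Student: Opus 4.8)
The plan is to verify each of the seven defining conditions of a toral-pair directly for the two-element chain $\mathcal{P}_1 = \{p_1 \preceq p_2\}$, since this is the smallest possible case and the corresponding algebra $\mathfrak{g}_A(\mathcal{P}_1)$ is only two-dimensional, spanned by $E_{p_1,p_1} - E_{p_2,p_2}$ and $E_{p_1,p_2}$. First I would note that $\mathfrak{g}_A(\mathcal{P}_1)$ is the (non-abelian) two-dimensional solvable Lie algebra, with $[E_{p_1,p_1}-E_{p_2,p_2}, E_{p_1,p_2}] = 2E_{p_1,p_2}$; I would check that $F = F_{\mathcal{P}_1} = E^*_{p_1,p_2}$ is Frobenius using the heuristic of Remark~\ref{rem:proofheur}: writing $B = a(E_{p_1,p_1}-E_{p_2,p_2}) + b E_{p_1,p_2} \in \ker F$ forces $b = 0$, and then $F([E_{p_1,p_1}-E_{p_2,p_2}, B]) = F(2a E_{p_1,p_2}) = 2a = 0$ forces $a = 0$, so $\dim \ker F = 0$ and $\mathcal{P}_1$ is a Frobenius poset. (Equivalently, $\Gamma_F(\mathcal{P}_1)$ is the single edge $p_1 \to p_2$, which is a spanning tree, so $F$ is small, and smallness together with the correct count of relations gives Frobenius-ness by the Gerstenhaber–Giaquinto framework.)

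Next I would dispatch the structural conditions. For (P1): $Ext(\mathcal{P}_1) = \{p_1, p_2\}$, so $|Ext(\mathcal{P}_1)| = 2$. For (P3): $\Sigma(\mathcal{P}_1)$ is a single $1$-simplex (edge), hence contractible. For (P2), the binary spectrum: here I would observe that $F$ satisfies all hypotheses of Theorem~\ref{thm:spec01} — $F$ is small and Frobenius, $U_F(\mathcal{P}_1) = \{p_2\}$ is a filter, $D_F(\mathcal{P}_1) = \{p_1\}$ is an ideal, and $B_F(\mathcal{P}_1) = \emptyset$ — so by Theorem~\ref{thm:01} the spectrum consists of $0$'s and $1$'s, and by the explicit computation in Theorem~\ref{thm:spec01} we get $c_{p_1,p_1} = \tfrac{1}{2}$, $c_{p_2,p_2} = -\tfrac{1}{2}$, so the eigenvalues of $ad(\widehat F)$ on the basis of Remark~\ref{rem:binbasis} are $0$ (on $E_{p_1,p_1}-E_{p_2,p_2}$) and $1$ (on $E_{p_1,p_2}$), i.e., one $0$ and one $1$ — an equal number, so the spectrum is binary.

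The functional conditions are then immediate from what has already been assembled: (F1) smallness was shown above; (F2) is exactly the filter/ideal/empty-middle verification done for (P2); (F3) requires $\Gamma_F$ to contain all edges between elements of $Ext(\mathcal{P}_1) = \mathcal{P}_1$, and the only such edge is $p_1 \to p_2$, which is present; (F4) asks that any $B \in \mathfrak{g}(\mathcal{P}_1) \cap \ker(F)$ have equal diagonal entries and vanishing strictly-upper entries — writing $B = c_1 E_{p_1,p_1} + c_2 E_{p_2,p_2} + b E_{p_1,p_2}$, the condition $F(B) = b = 0$ already kills the off-diagonal part, and one checks against the defining conditions that the remaining freedom forces $c_1 = c_2$ (this is where one uses that $F([E_{p_1,p_1},B]) = 0$, etc., constrain the diagonal; for the $2$-element chain this reduces to a one-line check). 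I do not expect any genuine obstacle here — the whole content is bookkeeping against the definition — but the one point deserving care is condition (F4), since it is a statement about the larger algebra $\mathfrak{g}(\mathcal{P}_1)$ rather than $\mathfrak{g}_A(\mathcal{P}_1)$, so I would make sure to interpret $\ker(F)$ correctly there and confirm the diagonal-equality claim explicitly rather than by analogy with the trace-zero computation.
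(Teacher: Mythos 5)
Your overall plan is the paper's: verify each condition of Definition~\ref{def:toralpair} directly. The items (P1), (P3), (F1)--(F3) and the eigenvalue count for (P2) are handled correctly. The genuine problem is your verification that $F$ is Frobenius, and the same confusion reappears in your (F4) check. In this paper $\ker(F)$ denotes the kernel of the Kirillov form $B_F$, not of the linear functional $F$: an element $B$ lies in it exactly when $F([x,B])=0$ for every $x$ in the algebra (this is how Remark~\ref{rem:proofheur} and all of the paper's kernel computations use the symbol). So ``$B\in\ker F$ forces $b=0$'' is not a legitimate step, since $F(B)=0$ is not among the defining conditions; and, worse, your one bracket computation is wrong: for $B=a(E_{p_1,p_1}-E_{p_2,p_2})+bE_{p_1,p_2}$ one has $[E_{p_1,p_1}-E_{p_2,p_2},B]=2b\,E_{p_1,p_2}$, not $2a\,E_{p_1,p_2}$, because the diagonal part of $B$ commutes with the diagonal element. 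With the bracket corrected, both of your conditions collapse to $b=0$ and leave $a$ unconstrained, so as written you have not shown $\dim\ker(B_F)=0$. The missing equation is the one obtained by bracketing with the nilpotent basis element: $F([E_{p_1,p_2},B])=E^*_{p_2,p_2}(B)-E^*_{p_1,p_1}(B)$, which equals $-2a$ in your parametrization; together with $F([E_{p_1,p_1},B])=E^*_{p_1,p_2}(B)=b$ this kills $B$, and this is exactly how the paper argues. Your parenthetical fallback (``smallness plus the correct count of relations gives Frobenius-ness'') is not a criterion established anywhere in this paper, so it cannot substitute for the computation. Note also that your use of Theorem~\ref{thm:spec01} for (P2) presupposes that $F$ is Frobenius, so the gap propagates into that step until it is repaired.

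The same correction fixes (F4). There $B$ ranges over $\mathfrak{g}(\mathcal{P}_1)\cap\ker(F)$ with $\ker(F)$ again the Kirillov kernel, so you may not use $F(B)=0$ to kill the off-diagonal entry; instead $F([E_{p_1,p_1},B])=0$ gives $E^*_{p_1,p_2}(B)=0$, while $F([E_{p_1,p_2},B])=0$ gives $E^*_{p_1,p_1}(B)=E^*_{p_2,p_2}(B)$ --- you attribute these two conclusions to the wrong equations. In the paper's order of argument these two identities are established first (they are precisely (F4)), and only then does the trace-zero condition force $B=0$, yielding the Frobenius property; once the bracket computations are repaired your write-up can follow the same two lines.
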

\begin{proof}
To ease notation, let $\mathcal{P}=\mathcal{P}_1$ and $F=F_{\mathcal{P}_1}$. It is clear that $|Ext(\mathcal{P})|=2$ and $\Sigma(\mathcal{P})$ is contractible so that \textbf{(P1)} and \textbf{(P3)} of Definition~\ref{def:toralpair} are satisfied. Now, taking $B\in \mathfrak{g}_A(\mathcal{P})\cap\ker(F)$ we have
\begin{itemize}
    \item $F([E_{p_1,p_2},B])=E^*_{p_2,p_2}(B)-E^*_{p_1,p_1}(B)=0$,
    \item $F([E_{p_1,p_1},B])=E^*_{p_1,p_2}(B)=0$.
\end{itemize}
Thus, $$E^*_{p_1,p_1}(B)=E^*_{p_2,p_2}(B)$$ and $$E^*_{p_1,p_2}(B)=0$$ so that $F$ satisfies \textbf{(F4)} of Definition~\ref{def:toralpair}. Furthermore, since $B\in\mathfrak{sl}(2)$, i.e., $E^*_{p_1,p_1}(B)+E^*_{p_2,p_2}(B)=0$, we conclude that $$E^*_{p_1,p_1}(B)=E^*_{p_2,p_2}(B)=0;$$ that is, $B=0$ and we have shown that $\mathfrak{g}_A(\mathcal{P})$ is Frobenius with Frobenius functional $F=E^*_{p_1,p_2}$.

Given the form of the Frobenius functional $F$, we have that
$F$ satisfies \textbf{(F1)} through \textbf{(F4)} of Definition~\ref{def:toralpair} as follows:
\begin{itemize}
    \item $F$ is clearly small,
    \item $D_F(\mathcal{P})=\{p_1\}$ forms an ideal of $\mathcal{P}$, $U_F(\mathcal{P})=\{p_2\}$ forms a filter, and $B_F(\mathcal{P})=\emptyset$,
    \item $\Gamma_F$ contains the only edge $(p_1,p_2)$ between elements of $Ext(\mathcal{P}_1)$, and
    \item \textbf{(F4)} was established above.
\end{itemize}
It remains to show that \textbf{(P2)} is satisfied; that is, $\mathfrak{g}_A(\mathcal{P})$ has a spectrum consisting of an equal number of 0's and 1's. To determine the spectrum of $\mathfrak{g}_A(\mathcal{P})$, it suffices to calculate $[\widehat{F},x]$ for $x\in \mathscr{B}_{\mathcal{P},F}$; but $\mathscr{B}_{\mathcal{P},F}$ has only two elements: $E_{p_1,p_1}-E_{p_2,p_2}$ and $E_{p_1,p_2}$. The former is an eigenvector of $ad(\widehat{F})$ with eigenvalue 0, and the latter is an eigenvector with eigenvalue 1. Therefore, $\mathfrak{g}_A(\mathcal{P})$ has a binary spectrum and $(\mathcal{P},F)$ forms a toral-pair.
\end{proof}

\begin{theorem}\label{thm:h2bb}
Each of the following pairs, consisting of a poset $\mathcal{P}$ and a functional $F_{\mathcal{P}}$, form a toral-pair $(\mathcal{P},F_{\mathcal{P}})$.
\begin{enumerate} [label=\textup(\roman*\textup)]
        \item $\mathcal{P}_2=\{p_1,p_2,p_3,p_4\}$ with $p_1\preceq p_2\preceq p_3,p_4$, and $$F_{\mathcal{P}_2}=E^*_{p_1,p_3}+E^*_{p_1,p_4}+E^*_{p_2,p_4},$$
        \item $\mathcal{P}^*_2=\{p_1,p_2,p_3,p_4\}$ with $p_1,p_2\preceq p_3\preceq p_4$, and $$F_{\mathcal{P}^*_2}=E^*_{p_1,p_4}+E^*_{p_2,p_4}+E^*_{p_2,p_3}.$$
\end{enumerate}
\end{theorem}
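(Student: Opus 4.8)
The plan is to follow the template established in the proof of Theorem~\ref{thm:h1bb}, treating each of the two posets in turn. For each pair $(\mathcal{P}, F_{\mathcal{P}})$ I would first verify the purely combinatorial conditions \textbf{(P1)} and \textbf{(P3)}: both $\mathcal{P}_2$ and $\mathcal{P}^*_2$ have exactly three extremal elements (for $\mathcal{P}_2$ these are $p_1, p_3, p_4$; for $\mathcal{P}^*_2$ they are $p_1, p_2, p_4$), and in each case $\Sigma(\mathcal{P})$ is a union of two triangles sharing an edge, hence contractible. Next I would verify that $F_{\mathcal{P}}$ is Frobenius using the method of Remark~\ref{rem:proofheur}: take $B \in \mathfrak{g}_A(\mathcal{P}) \cap \ker(F)$, write out the equations $F([E_{p_i,p_i},B]) = 0$ and $F([E_{p_i,p_j},B]) = 0$ for $p_i \preceq p_j$, and show they force $B = 0$. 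Since $|\mathcal{P}| = 4$ here, this is a small linear-algebra computation; the diagonal constraints plus the $\mathfrak{sl}(4)$ trace-zero condition should pin down the diagonal entries, and the off-diagonal constraints kill the remaining coordinates. In the course of this computation one reads off that $E^*_{p,p}(B) = E^*_{q,q}(B)$ for all $p,q$ and $E^*_{p,q}(B) = 0$ for $p \preceq q$, which is exactly condition \textbf{(F4)}.

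With Frobeniusness in hand, conditions \textbf{(F1)}--\textbf{(F3)} are immediate from the explicit form of $F_{\mathcal{P}}$: the support of $F_{\mathcal{P}}$, viewed as a directed graph $\Gamma_{F_{\mathcal{P}}}$, is a spanning tree of the comparability graph (three edges on four vertices, connected), so $F$ is small; for $\mathcal{P}_2$ one computes $D_F = \{p_1, p_2\}$ (an ideal) and $U_F = \{p_3, p_4\}$ (a filter) with $B_F = \emptyset$, and symmetrically for $\mathcal{P}^*_2$ one gets $D_F = \{p_1, p_2\}$, $U_F = \{p_3, p_4\}$ — here I should double-check orientations, since in $\mathcal{P}^*_2$ the chain runs $p_1, p_2 \preceq p_3 \preceq p_4$, so $p_3$ is a source relative to the edge $p_3 \to p_4$ but a sink relative to $p_1 \to p_3$ and $p_2 \to p_3$, meaning one must confirm $B_F = \emptyset$ by checking that the given $F_{\mathcal{P}^*_2} = E^*_{p_1,p_4} + E^*_{p_2,p_4} + E^*_{p_2,p_3}$ makes $p_2$ a pure source and $p_3$ a pure sink. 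Condition \textbf{(F3)} then follows by inspecting that every edge of $Rel_E(\mathcal{P})$ appears among the summands of $F_{\mathcal{P}}$.

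Finally, for \textbf{(P2)} I would invoke Theorem~\ref{thm:01}: since $F_{\mathcal{P}}$ has just been shown to satisfy the hypotheses of Theorem~\ref{thm:spec01} (small, Frobenius, $U_F$ a filter, $D_F$ an ideal, $B_F = \emptyset$), the spectrum of $\mathfrak{g}_A(\mathcal{P})$ consists of $0$'s and $1$'s. To upgrade "$0$'s and $1$'s" to "binary" I would use the basis $\mathscr{B}_{\mathcal{P},F}$ of Remark~\ref{rem:binbasis} and count: the three elements $E_{p,p} - E_{q,q}$ coming from the summands of $F$ have eigenvalue $0$; among the five elements $E_{p,q}$ with $p \preceq q$, those with $p, q \in D_F$ or $p, q \in U_F$ have eigenvalue $0$ (via Cases 1--2 of Theorem~\ref{thm:01}) while those with $p \in D_F$, $q \in U_F$ have eigenvalue $1$ (Case 3). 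Tallying gives equal counts; e.g.\ for $\mathcal{P}_2$ the relations $p_1 \preceq p_3$, $p_1 \preceq p_4$, $p_2 \preceq p_3$, $p_2 \preceq p_4$ all cross from $D_F$ to $U_F$ (eigenvalue $1$, four of them), the relation $p_1 \preceq p_2$ lies inside $D_F$ (eigenvalue $0$), and together with the three diagonal basis elements that is $4$ zeros and $4$ ones. The main obstacle, such as it is, is purely bookkeeping: correctly identifying the source/sink partition for $\mathcal{P}^*_2$ — where the central vertex $p_3$ sits on both sides of different covering relations — and then making sure the eigenvalue tally comes out exactly balanced. No single step is deep; the work is in getting the orientations and the counts right for both posets.
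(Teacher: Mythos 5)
Your proposal follows essentially the same route as the paper: it checks \textbf{(P1)}/\textbf{(P3)} directly, establishes Frobeniusness together with \textbf{(F4)} via the kernel equations of Remark~\ref{rem:proofheur}, reads \textbf{(F1)}--\textbf{(F3)} off the explicit functional (with the correct source/sink partition $D_F=\{p_1,p_2\}$, $U_F=\{p_3,p_4\}$ for both posets), and obtains \textbf{(P2)} by the same $4$-zeros/$4$-ones tally on $\mathscr{B}_{\mathcal{P},F}$ that the paper records as the sets $G_0$ and $G_1$. The only slip is cosmetic: the equations $F([E_{p,p},B])=0$ constrain the off-diagonal coordinates of $B$ (the diagonal entries cancel in that bracket), while it is the equations $F([E_{p,q},B])=0$ with $p\prec q$ that force the diagonal entries of $B$ to coincide, after which trace-zero gives $B=0$ --- the reverse of the roles you assigned, but with no effect on the validity of the argument.
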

\begin{proof}
We prove (i), as (ii) follows via a symmetric argument. To ease notation, let $\mathcal{P}=\mathcal{P}_{2}$ and $F=F_{\mathcal{P}_{2}}$. It is clear that $|Ext(\mathcal{P})|=3$ and $\Sigma(\mathcal{P})$ is contractible so that \textbf{(P1)} and \textbf{(P3)} of Definition~\ref{def:toralpair} are satisfied. Now, assume $B\in\mathfrak{g}_A(\mathcal{P})\cap\ker(F)$. Then $B$ must satisfy the following restrictions, which are broken into 3 groups:

\bigskip
\noindent
\textbf{Group 1:}
\begin{itemize}
    \item $F([E_{p_2,p_2},B])=E_{p_2,p_3}^*(B)=0$,
    \item $F([E_{p_4,p_4},B])=-E_{p_1,p_4}^*(B)=0$,
    \item $F([E_{p_2,p_3},B])=-E^*_{p_1,p_2}(B)=0$.
\end{itemize}
\textbf{Group 2:}
\begin{itemize}
    \item $F([E_{p_1,p_1},B])=E_{p_1,p_2}^*(B)+E_{p_1,p_4}^*(B)=0$,
    \item $F([E_{p_3,p_3},B])=-E_{p_1,p_3}^*(B)-E_{p_2,p_3}^*(B)=0$,
    \item $F([E_{p_1,p_2},B])=E^*_{p_2,p_3}(B)+E^*_{p_2,p_4}(B)=0$,
\end{itemize}
\textbf{Group 3:}
\begin{itemize}
    \item $F([E_{p_1,p_3},B])=E^*_{p_3,p_3}(B)-E^*_{p_1,p_1}(B)=0$,
    \item $F([E_{p_1,p_4},B])=E^*_{p_4,p_4}(B)-E^*_{p_1,p_1}(B)=0$,
    \item $F([E_{p_2,p_4},B])=E^*_{p_4,p_4}(B)-E^*_{p_2,p_2}(B)-E^*_{p_1,p_2}(B)=0$.
\end{itemize}
The restrictions of the equations in Group 1 immediately imply that
\begin{equation}\label{eqn:h2bb1}
E_{p_2,p_3}^*(B)=E_{p_1,p_4}^*(B)=E^*_{p_1,p_2}(B)=0.
\end{equation} 
Combining the Group 1 restrictions to those of Group 2, we may conclude that
\begin{equation}\label{eqn:h2bb2}
E_{p_1,p_3}^*(B)=E^*_{p_2,p_4}(B)=0.
\end{equation} 
Finally, combining the restrictions of Group 1 to those of Group 3, we find that
\begin{equation}\label{eqn:h2bb3}
E^*_{p_i,p_i}(B)=E^*_{p_j,p_j}(B)\text{ for all }p_i,p_j\in\mathcal{P}.
\end{equation}
Since $B\in \mathfrak{sl}(4)$, equation (\ref{eqn:h2bb3}) implies that $E^*_{p_i,p_i}(B)=0$ for all $p_i\in\mathcal{P}$. Thus, $B=0$ and $\mathfrak{g}_A(\mathcal{P})$ is Frobenius with a choice of Frobenius functional given by $F$.

Given the form of the Frobenius functional $F$, we have that
$F$ satisfies \textbf{(F1)} through \textbf{(F4)} of Definition~\ref{def:toralpair} as follows:
\begin{itemize}
    \item $F$ is clearly small,
    \item $D_F(\mathcal{P})=\{p_1,p_2\}$ forms an ideal of $\mathcal{P}$, $U_F(\mathcal{P})=\{p_3,p_4\}$ forms a filter, and $B_F(\mathcal{P})=\emptyset$;
    \item $\Gamma_F$ contains the only edges, $(p_1,p_3)$ and $(p_1,p_4)$, between elements of $Ext(\mathcal{P})$; and
    \item equations (\ref{eqn:h2bb1}), (\ref{eqn:h2bb2}), and (\ref{eqn:h2bb3}) above show that $F$ satisfies \textbf{(F4)} of Definition~\ref{def:toralpair}.
\end{itemize}
It remains to show that \textbf{(P2)} is satisfied; that is, $\mathfrak{g}_A(\mathcal{P})$ has a spectrum consisting of an equal number of 0's and 1's. To determine the spectrum of $\mathfrak{g}_A(\mathcal{P})$, it suffices to calculate $[\widehat{F},x]$ for $x\in \mathscr{B}_{\mathcal{P},F}$. Note that $\mathscr{B}_{\mathcal{P},F}$ can be partitioned into two sets:
$$G_0=\{E_{p_3,p_3}-E_{p_1,p_1},E_{p_4,p_4}-E_{p_1,p_1},E_{p_4,p_4}-E_{p_2,p_2},E_{p_1,p_2}\},$$
which consists of eigenvectors of $ad(\widehat{F})$ with eigenvalue 0, and
$$G_1=\{E_{p_1,p_3},E_{p_1,p_4},E_{p_2,p_3},E_{p_2,p_4}\}$$
 which consists of eigenvectors with eigenvalue 1. As $|G_0|=|G_1|$, we conclude that $\mathfrak{g}_A(\mathcal{P})$ has a binary spectrum and $(\mathcal{P},F)$ forms a toral-pair.
\end{proof}

For the remaining posets, we relegate the proofs that the poset along with the corresponding Frobenius functional form a toral-pair to Appendix B.

\begin{theorem}\label{thm:h3bb}
Each of the following pairs, consisting of a poset $\mathcal{P}$ and a functional $F_{\mathcal{P}}$, form a toral-pair $(\mathcal{P},F_{\mathcal{P}})$.
\begin{enumerate} [label=\textup(\roman*\textup)]
        \item $\mathcal{P}_3=\{p_1,p_2,p_3,p_4,p_5,p_6\}$ with $p_1\preceq p_2\preceq p_3,p_4$; $p_3\preceq p_5$; and $p_4\preceq p_6$, and $$F_{\mathcal{P}_3}=E^*_{p_1,p_5}+E^*_{p_1,p_6}+E^*_{p_2,p_3}+E^*_{p_2,p_4}+E^*_{p_2,p_6},$$
        \item $\mathcal{P}^*_3=\{p_1,p_2,p_3,p_4,p_5,p_6\}$ with $p_1\preceq p_3$; $p_2\preceq p_4$; and $p_3,p_4\preceq p_5\preceq p_6$, and $$F_{\mathcal{P}_3^*}=E^*_{p_1,p_6}+E^*_{p_2,p_6}+E^*_{p_3,p_5}+E^*_{p_4,p_5}+E^*_{p_2,p_5},$$
        \item $\mathcal{P}_{4,n}=\{p_1,\hdots,p_n\}$ with $p_1\preceq p_2\preceq\hdots\preceq p_{n-1}$ as well as $p_1\preceq p_2\preceq\hdots\preceq p_{\lfloor\frac{n}{2}\rfloor}\preceq p_n$, and $$F_{\mathcal{P}_{4,n}}=\sum_{i=1}^{\lfloor\frac{n-1}{2}\rfloor}E^*_{p_i,p_{n-i}}+\sum_{i=1}^{\lfloor\frac{n}{2}\rfloor}E^*_{p_i,p_n},$$
        \item $\mathcal{P}^*_{4,n}=\{p_1,\hdots,p_n\}$ with $p_1\preceq p_2\preceq \hdots\preceq p_{\lfloor\frac{n-1}{2}\rfloor}\preceq p_{\lfloor\frac{n-1}{2}\rfloor+2}\preceq\hdots\preceq p_{n}$ as well as $p_{\lfloor\frac{n-1}{2}\rfloor+1}\preceq p_{\lfloor\frac{n-1}{2}\rfloor+2}\preceq \hdots\preceq p_{n}$, and $$F_{\mathcal{P}^*_{4,n}}=\sum_{i=1}^{\lfloor\frac{n-1}{2}\rfloor}E^*_{p_i,p_{n+1-i}}+\sum_{i=\lceil\frac{n}{2}\rceil}^{n}E^*_{p_{\lceil\frac{n}{2}\rceil},p_i},$$
        \item $\mathcal{P}_{5,n}=\{p_1,\hdots,p_{2n+1}\}$ with $p_i\preceq p_j$ for $1\le i<2n$ odd and $i+1\le j\le 2n+1$ as well as $p_i\preceq p_j$ for $1<i<2n$ even and $i+2\le j\le 2n+1$, and $$F_{\mathcal{P}_{5,n}}=E^*_{p_1,p_{2n+1}}+\sum_{i=1}^{\lceil\frac{n-1}{2}\rceil}E^*_{p_i, p_{2n}}+\sum_{k=1}^{\lfloor\frac{n-1}{2}\rfloor}E^*_{p_{2k}, p_{2n-2k}}+\sum_{k=1}^{\lfloor\frac{n-1}{2}\rfloor}E^*_{p_{2k+1}, p_{2n-2k+1}},$$
        \item $\mathcal{P}^*_{5,n}=\{p_1,\hdots,p_{2n+1}\}$ with $p_i\preceq p_j$ for $1\le i<2n$ odd and $i+2\le j\le 2n+1$, as well as $p_i\preceq p_j$ for $1< i<2n$ even and $i+1\le j\le 2n+1$, and $$F_{\mathcal{P}^*_{5,n}}=E^*_{p_1,p_{2n+1}}+\sum_{i=2\lfloor\frac{n+1}{4}\rfloor+1}^{2n+1}E^*_{p_2, p_i}+\sum_{k=2}^{\lfloor\frac{n+1}{2}\rfloor}E^*_{p_{2k}, p_{2n-2k+4}}+\sum_{k=1}^{\lfloor\frac{n-1}{2}\rfloor}E^*_{p_{2k+1}, p_{2n-2k+1}},$$
\end{enumerate}
\end{theorem}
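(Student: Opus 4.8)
The plan is to treat each of the six pairs $(\mathcal{P},F_{\mathcal{P}})$ separately, following the template of the proofs of Theorems~\ref{thm:h1bb} and~\ref{thm:h2bb}: verify \textbf{(P1)} and \textbf{(P3)} by inspection, prove that $F_{\mathcal{P}}$ is Frobenius (which will simultaneously yield \textbf{(F4)}), read \textbf{(F1)}--\textbf{(F3)} off the directed graph $\Gamma_{F_{\mathcal{P}}}$, and obtain \textbf{(P2)} from Theorem~\ref{thm:01}. Condition \textbf{(P1)} holds because $|Ext(\mathcal{P})|\in\{2,3\}$ is visible from the Hasse diagrams in Figure~\ref{fig:bb}. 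For \textbf{(P3)}, note that each poset in the list has either a least element (the unstarred ones; e.g.\ $p_1$ for $\mathcal{P}_2$, $\mathcal{P}_3$, $\mathcal{P}_{4,n}$, $\mathcal{P}_{5,n}$) or a greatest element (the starred ones; e.g.\ $p_4$ for $\mathcal{P}^*_2$, $p_6$ for $\mathcal{P}^*_3$, $p_n$ for $\mathcal{P}^*_{4,n}$, $p_{2n+1}$ for $\mathcal{P}^*_{5,n}$), so that $\Sigma(\mathcal{P})$ is a cone and hence contractible.

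The heart of the argument is showing that $F_{\mathcal{P}}$ is Frobenius, which we do via the recipe of Remark~\ref{rem:proofheur}: take $B\in\mathfrak{g}_A(\mathcal{P})\cap\ker(F_{\mathcal{P}})$, write out the linear system consisting of $F_{\mathcal{P}}([E_{p,p},B])=0$ for $p\in\mathcal{P}$ together with $F_{\mathcal{P}}([E_{p,q},B])=0$ for every strict relation $p\preceq q$, and deduce that $B=0$. As in the proof of Theorem~\ref{thm:h2bb}, these equations can be organized in stages: an initial batch forces certain off-diagonal coordinates $E^*_{p,q}(B)$ to vanish, subsequent batches propagate these vanishings along the relations of $\mathcal{P}$ until all off-diagonal coordinates are killed, and a final batch forces all diagonal coordinates $E^*_{p,p}(B)$ to be equal; since $B\in\mathfrak{sl}(|\mathcal{P}|)$ this common value is $0$, so $B=0$. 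Running the same computation with $B$ ranging over $\mathfrak{g}(\mathcal{P})\cap\ker(F_{\mathcal{P}})$ instead shows that all off-diagonal coordinates of such a $B$ vanish and its diagonal coordinates coincide, which is exactly \textbf{(F4)}. For the four infinite families $\mathcal{P}_{4,n}$, $\mathcal{P}^*_{4,n}$, $\mathcal{P}_{5,n}$, $\mathcal{P}^*_{5,n}$ this must be carried out uniformly in $n$, which I would do by induction on $n$, propagating the vanishing relations inward along the defining chains, with a short case split on the parity of $n$ to handle the floor/ceiling indices appearing in $F_{\mathcal{P}}$. I expect this uniform-in-$n$ bookkeeping to be the main obstacle; everything else is routine.

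Once $F_{\mathcal{P}}$ is known to be Frobenius, conditions \textbf{(F1)}--\textbf{(F3)} are read directly off $\Gamma_{F_{\mathcal{P}}}$: it has $|\mathcal{P}|-1$ edges and is connected and acyclic, hence a spanning subtree of the comparability graph, so $F_{\mathcal{P}}$ is small; its source set $D_{F_{\mathcal{P}}}(\mathcal{P})$ and sink set $U_{F_{\mathcal{P}}}(\mathcal{P})$ partition $\mathcal{P}$ (so $B_{F_{\mathcal{P}}}(\mathcal{P})=\emptyset$), with $D_{F_{\mathcal{P}}}(\mathcal{P})$ an ideal and $U_{F_{\mathcal{P}}}(\mathcal{P})$ a filter; and the few comparability edges between elements of $Ext(\mathcal{P})$ all lie in $\Gamma_{F_{\mathcal{P}}}$. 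Finally, for \textbf{(P2)}: as $\mathcal{P}$ is Frobenius and $F_{\mathcal{P}}$ satisfies the hypotheses of Theorem~\ref{thm:spec01}, Theorem~\ref{thm:01} gives that the spectrum of $\mathfrak{g}_A(\mathcal{P})$ consists only of $0$'s and $1$'s, and it remains to check that the two multiplicities agree. Mirroring the proofs of Theorems~\ref{thm:h1bb} and~\ref{thm:h2bb}, one partitions the basis $\mathscr{B}_{\mathcal{P},F_{\mathcal{P}}}$ of Remark~\ref{rem:binbasis} into the $ad(\widehat{F}_{\mathcal{P}})$-eigenvectors of eigenvalue $0$ (the elements $E_{p,p}-E_{q,q}$, together with those $E_{p,q}$ having both endpoints in $U_{F_{\mathcal{P}}}(\mathcal{P})$ or both in $D_{F_{\mathcal{P}}}(\mathcal{P})$) and those of eigenvalue $1$ (the $E_{p,q}$ with $p\in D_{F_{\mathcal{P}}}(\mathcal{P})$ and $q\in U_{F_{\mathcal{P}}}(\mathcal{P})$), and checks that the two sets have the same size---a finite computation for $\mathcal{P}_3$ and $\mathcal{P}^*_3$, and a short counting identity for the $n$-families. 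Alternatively, one may avoid this case analysis entirely: the Jacobi identity gives $F_{\mathcal{P}}([[\widehat{F}_{\mathcal{P}},x],y])+F_{\mathcal{P}}([x,[\widehat{F}_{\mathcal{P}},y]])=F_{\mathcal{P}}([\widehat{F}_{\mathcal{P}},[x,y]])=F_{\mathcal{P}}([x,y])$ for all $x,y$, so the nondegenerate Kirillov form of $F_{\mathcal{P}}$ pairs the $0$- and $1$-eigenspaces of $ad(\widehat{F}_{\mathcal{P}})$ nondegenerately and they therefore have equal dimension. In either case $\mathfrak{g}_A(\mathcal{P})$ has a binary spectrum, so \textbf{(P2)} holds and $(\mathcal{P},F_{\mathcal{P}})$ is a toral-pair.
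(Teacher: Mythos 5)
Your template matches the paper's own proof (carried out in Appendix B): verify \textbf{(P1)} and \textbf{(P3)} by inspection, establish that $F_{\mathcal{P}}$ is Frobenius via the kernel computation of Remark~\ref{rem:proofheur} (which simultaneously yields \textbf{(F4)}), read \textbf{(F1)}--\textbf{(F3)} off $\Gamma_{F_{\mathcal{P}}}$, and deduce \textbf{(P2)}. Two of your shortcuts are genuine improvements: noting that each listed poset has a minimum or maximum, so that $\Sigma(\mathcal{P})$ is a cone, is cleaner than the paper's case-by-case contractibility arguments; and the Jacobi-identity pairing argument -- $F_{\mathcal{P}}([x,y])=(a+b)F_{\mathcal{P}}([x,y])$ for eigenvectors $x,y$ of $ad(\widehat{F}_{\mathcal{P}})$ with eigenvalues $a,b$, so the nondegenerate form $B_{F_{\mathcal{P}}}$ pairs $V_0$ with $V_1$ and forces $\dim V_0=\dim V_1$ -- would eliminate the paper's explicit enumeration of the eigenvector classes (the $S_i$ cardinality bookkeeping in Appendix B), since equal multiplicities follow for free once the spectrum is known to lie in $\{0,1\}$.

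The gap is that the heart of the theorem -- proving that $F_{\mathcal{P}}$ is Frobenius and that every $B\in\mathfrak{g}(\mathcal{P})\cap\ker(F_{\mathcal{P}})$ satisfies \textbf{(F4)}, uniformly in $n$ for the families $\mathcal{P}_{4,n}$, $\mathcal{P}^*_{4,n}$, $\mathcal{P}_{5,n}$, $\mathcal{P}^*_{5,n}$ -- is never carried out, and the mechanism you propose for it (induction on $n$) is unlikely to work as stated. The summands of $F_{\mathcal{P}_{4,n}}$ are $E^*_{p_i,p_{n-i}}$ and those of $F_{\mathcal{P}_{5,n}}$ include $E^*_{p_{2k},p_{2n-2k}}$ and $E^*_{p_{2k+1},p_{2n-2k+1}}$: the index matching depends globally on $n$, so $F_{\mathcal{P}_{4,n}}$ is not obtained from $F_{\mathcal{P}_{4,n-1}}$ by restriction or by appending summands, and there is no evident way for the kernel computation at stage $n$ to invoke the statement at stage $n-1$. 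This is precisely why the paper does not induct: it writes down the full system $F([E_{p,p},B])=0$, $F([E_{p,q},B])=0$ for general $n$ and eliminates in explicitly indexed stages (Groups 1--3 for $\mathcal{P}_{4,n}$, Groups 1--6 for $\mathcal{P}_{5,n}$), the parity of $n$ entering only through the floor/ceiling bounds. Until that uniform-in-$n$ elimination (or a genuinely workable inductive substitute) is exhibited, the Frobenius property and \textbf{(F4)} are unproven, and with them your \textbf{(P2)} argument as well, since both the counting route and the pairing route presuppose nondegeneracy of $B_{F_{\mathcal{P}}}$ and the hypotheses of Theorems~\ref{thm:spec01} and~\ref{thm:01}; what you have is a correct outline of the paper's strategy with its decisive computation deferred.
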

\begin{proof}
Appendix B.
\end{proof}

\noindent
Thus, the posets $\mathcal{P}_1$, $\mathcal{P}_2$, $\mathcal{P}^*_2$, $\mathcal{P}_3$, $\mathcal{P}^*_3$, $\mathcal{P}_{4,n}$, $\mathcal{P}^*_{4,n}$ $\mathcal{P}_{5,n}$, and $\mathcal{P}^*_{5,n}$ along with the corresponding Frobenius functionals found in Theorems~\ref{thm:h1bb}-\ref{thm:h3bb} form toral-pairs. In the next section, posets of toral-pairs are combined to form toral posets.

\section{Toral posets}\label{sec:toralpos}
In this section, we define toral posets, which are constructed inductively from the posets of toral-pairs. Furthermore, we show that if $\mathcal{P}$ is a toral poset, then $\Sigma(\mathcal{P})$ is homotopic to a wedge sum of $\ind\mathfrak{g}_A(\mathcal{P})$ one-spheres.

Let $(\mathcal{S}, F)$ be a toral-pair and $\mathcal{Q}$
be a poset. We define twelve ways of ``combining" the posets $\mathcal{S}$ and $\mathcal{Q}$ by identifying minimal (resp., maximal) elements of $\mathcal{S}$ with minimal (resp., maximal) elements of $\mathcal{Q}$. If $|Ext(\mathcal{S})|=2$, then $Ext(\mathcal{S})=\{a,b\}$ with either $a\preceq_{\mathcal{S}}b$ or $b\preceq_{\mathcal{S}}a$; and if $|Ext(\mathcal{S})|=3$, then $Ext(\mathcal{S})=\{a,b,c\}$ with either $a\preceq_{\mathcal{S}} b,c$ or $b,c\preceq_{\mathcal{S}} a$. Further, assume $x,y,z\in Ext(\mathcal{Q})$. Since the construction rules are defined by identifying minimal elements and maximal elements of $\mathcal{S}$ and $\mathcal{Q}$, assume that if $a,b,$ or $c$ are identified with elements of $\mathcal{Q}$, then those elements are $x,y,$ or $z$, respectively. To ease notation, let $\sim_{\mathcal{P}}$ denote that two elements of a poset $\mathcal{P}$ are related, and let $\nsim_{\mathcal{P}}$ denote that two elements are not related; that is, for $x,y\in\mathcal{P}$, $x\sim_{\mathcal{P}} y$ denotes that $x\preceq_{\mathcal{P}}y$ or $y\preceq_{\mathcal{P}}x$, and $x\nsim_{\mathcal{P}} y$ denotes that both $x\npreceq_{\mathcal{P}}y$ and $y\npreceq_{\mathcal{P}}x$. The construction rules are as follows: If $|Ext(\mathcal{S})|=2$ or $3$, then
\begin{itemize}
    \item $\textup{A}_1$ denotes identifying $b\in Ext(\mathcal{S})$ with $y\in Ext(\mathcal{Q})$,
    \item $\textup{C}$ denotes identifying $a\in Ext(\mathcal{S})$ with $x\in Ext(\mathcal{Q})$,
    \item $\textup{E}_1$ denotes identifying $a,b\in Ext(\mathcal{S})$ with $x,y\in Ext(\mathcal{Q})$, where $x\nsim_{\mathcal{Q}} y$.
\end{itemize}
If $Ext(\mathcal{S})=3$, then
\begin{itemize}
    \item $\textup{A}_2$ denotes identifying $c\in Ext(\mathcal{S})$ with $z\in Ext(\mathcal{Q})$,
    \item $\textup{B}$ denotes identifying $b,c\in Ext(\mathcal{S})$ with $y,z\in Ext(\mathcal{Q})$,
    \item $\textup{D}_1$ denotes identifying $a,b\in Ext(\mathcal{S})$ with $x,y\in Ext(\mathcal{Q})$, where $x\sim_{\mathcal{Q}} y$,
    \item $\textup{D}_2$ denotes identifying $a,c\in Ext(\mathcal{S})$ with $x,z\in Ext(\mathcal{Q})$, where $x\sim_{\mathcal{Q}} z$,
    \item $E_2$ denotes identifying $a,c\in Ext(\mathcal{S})$ with $x,z\in Ext(\mathcal{Q})$, where $x\nsim_{\mathcal{Q}} z$,
    \item $\textup{F}$ denotes identifying $a,b,c\in Ext(\mathcal{S})$ with $x,y,z\in Ext(\mathcal{Q})$, where $x\sim_{\mathcal{Q}} y$ and $x\sim_{\mathcal{Q}} z$,
    \item $\textup{G}_1$ denotes identifying $a,b,c\in Ext(\mathcal{S})$ with $x,y,z\in Ext(\mathcal{Q})$, where $x\sim_{\mathcal{Q}} y$ and $x\nsim_{\mathcal{Q}} z$,
    \item $\textup{G}_2$ denotes identifying $a,b,c\in Ext(\mathcal{S})$ with $x,y,z\in Ext(\mathcal{Q})$, where $x\nsim_{\mathcal{Q}} y$ and $x\sim_{\mathcal{Q}} z$,
    \item $\textup{H}$ denotes identifying $a,b,c\in Ext(\mathcal{S})$ with $x,y,z\in Ext(\mathcal{Q})$, where $x\nsim_{\mathcal{Q}} y$ and $x\nsim_{\mathcal{Q}} z$.
\end{itemize}

\begin{definition}\label{def:toral}
A poset $\mathcal{P}$ is called \textit{toral} if there exists a sequence of toral-pairs $\{(\mathcal{S}_i,F_i)\}_{i=1}^n$ along with a sequence of posets $\mathcal{S}_1=\mathcal{Q}_1\subset\mathcal{Q}_2\subset\hdots\subset\mathcal{Q}_n=\mathcal{P}$ such that $\mathcal{Q}_{i}$ is formed from $\mathcal{Q}_{i-1}$ and $\mathcal{S}_{i}$ by applying a rule from the set $\{A_1,A_2,B,C, D_1,D_2,E_1,E_2,F,G_1,G_2,H\}$, for $i=2,\hdots,n$. Such a sequence
$\mathcal{S}_1=\mathcal{Q}_1\subset\mathcal{Q}_2\subset\hdots\subset\mathcal{Q}_n=\mathcal{P}$ is called a construction sequence for $\mathcal{P}$.
\end{definition}

\begin{Ex} Let $\mathcal{P}$ be the toral poset constructed from the toral-pairs $\{(\mathcal{S}_i,F_i)\}_{i=1}^5$, where $\mathcal{S}_i=\mathcal{P}_2$, for $i=1,\hdots,5$, with attendant construction sequence $\mathcal{S}_1=\mathcal{Q}_1\subset \mathcal{Q}_2\subset\mathcal{Q}_3\subset\mathcal{Q}_4\subset\mathcal{Q}_5=\mathcal{P}$, where: $\mathcal{Q}_2$ is formed from $\mathcal{Q}_1$ and $\mathcal{S}_2$ by applying rule $A_1$, $\mathcal{Q}_3$ is formed from $\mathcal{Q}_2$ and $\mathcal{S}_3$ by applying rule $C$, $\mathcal{Q}_4$ is formed from $\mathcal{Q}_3$ and $\mathcal{S}_4$ by applying rule $D_1$, and $\mathcal{Q}_5=\mathcal{P}$ is formed from $\mathcal{Q}_4$ and $\mathcal{S}_5$ by applying rule $F$. See Figure~\ref{fig:toral}.
\begin{figure}[H]
$$\begin{tikzpicture}[scale=0.4]
\node [circle, draw = black, fill = black, inner sep = 0.5mm] (v1) at (-2.5,0.5) {};
\node [circle, draw = black, fill = black, inner sep = 0.5mm] (v2) at (-2.5,1.5) {};
\node [circle, draw = black, fill = black, inner sep = 0.5mm] (v4) at (-3,2.5) {};
\node [circle, draw = black, fill = black, inner sep = 0.5mm] (v3) at (-2,2.5) {};
\draw (v1) -- (v2) -- (v3);
\draw (v2) -- (v4);
\draw[->] (-1.5,1.5) -- (-0.5,1.5);
\node [circle, draw = black, fill = black, inner sep = 0.5mm] (v7) at (0,2.5) {};
\node [circle, draw = black, fill = black, inner sep = 0.5mm] (v8) at (1,2.5) {};
\node [circle, draw = black, fill = black, inner sep = 0.5mm] (v11) at (2,2.5) {};
\node [circle, draw = black, fill = black, inner sep = 0.5mm] (v10) at (1.5,1.5) {};
\node [circle, draw = black, fill = black, inner sep = 0.5mm] (v6) at (0.5,1.5) {};
\node [circle, draw = black, fill = black, inner sep = 0.5mm] (v5) at (0.5,0.5) {};
\node [circle, draw = black, fill = black, inner sep = 0.5mm] (v9) at (1.5,0.5) {};
\node at (-1,2) {$A_1$};
\draw (v5) -- (v6) -- (v7);
\draw (v6) -- (v8);
\draw (v9) -- (v10) -- (v8);
\draw (v10) -- (v11);
\draw [->] (2.5,1.5) -- (3.5,1.5);
\node at (3,2) {$C$};
\node [circle, draw = black, fill = black, inner sep = 0.5mm] (v14) at (4,2.5) {};
\node [circle, draw = black, fill = black, inner sep = 0.5mm] (v15) at (5,2.5) {};
\node [circle, draw = black, fill = black, inner sep = 0.5mm] (v18) at (6,2.5) {};
\node [circle, draw = black, fill = black, inner sep = 0.5mm] (v20) at (7,2.5) {};
\node [circle, draw = black, fill = black, inner sep = 0.5mm] (v21) at (8,2.5) {};
\node [circle, draw = black, fill = black, inner sep = 0.5mm] (v13) at (4.5,1.5) {};
\node [circle, draw = black, fill = black, inner sep = 0.5mm] (v17) at (5.5,1.5) {};
\node [circle, draw = black, fill = black, inner sep = 0.5mm] (v19) at (6.5,1.5) {};
\node [circle, draw = black, fill = black, inner sep = 0.5mm] (v12) at (4.5,0.5) {};
\node [circle, draw = black, fill = black, inner sep = 0.5mm] (v16) at (5.5,0.5) {};
\draw (v12) -- (v13) -- (v14);
\draw (v13) -- (v15);
\draw (v16) -- (v17) -- (v15);
\draw (v17) -- (v18);
\draw (v16) -- (v19) -- (v20);
\draw (v19) -- (v21);
\draw[->] (8.5,1.5) -- (9.5,1.5);
\node at (9,2) {$D_1$};
\node  (v24) at (10,2.5) [circle, draw = black, fill = black, inner sep = 0.5mm] {};
\node  (v25) at (11,2.5) [circle, draw = black, fill = black, inner sep = 0.5mm] {};
\node (v28) at (12,2.5) [circle, draw = black, fill = black, inner sep = 0.5mm] {};
\node (v31) at (13,2.5) [circle, draw = black, fill = black, inner sep = 0.5mm] {};
\node (v32) at (14,2.5) [circle, draw = black, fill = black, inner sep = 0.5mm] {};
\node  (v23) at (10.5,1.5) [circle, draw = black, fill = black, inner sep = 0.5mm] {};
\node  (v27) at (11.5,1.5) [circle, draw = black, fill = black, inner sep = 0.5mm] {};
\node  (v29) at (13.5,1.5) [circle, draw = black, fill = black, inner sep = 0.5mm] {};
\node  (v22) at (10.5,0.5) [circle, draw = black, fill = black, inner sep = 0.5mm] {};
\node  (v26) at (11.5,0.5) [circle, draw = black, fill = black, inner sep = 0.5mm] {};
\node  (v30) at (12.5,1.5) [circle, draw = black, fill = black, inner sep = 0.5mm] {};
\draw (v22) -- (v23) -- (v24);
\draw (v23) -- (v25);
\draw (v26) -- (v27) -- (v25);
\draw (v27) -- (v28);
\draw (v26) -- (v29) -- (v28);
\draw (v26) -- (v30);
\draw (v31) -- (v30) -- (v32);
\node [circle, draw = black, fill = black, inner sep = 0.5mm] (v33) at (15,2.5) {};
\draw  (v29) -- (v33);
\draw [->] (15.5,1.5) -- (16.5,1.5);
\node at (16,2) {$F$};
\node [circle, draw = black, fill = black, inner sep = 0.5mm] (v46) at (17,1.5) {};
\node [circle, draw = black, fill = black, inner sep = 0.5mm] (v36) at (17.5,2.5) {};
\node [circle, draw = black, fill = black, inner sep = 0.5mm] (v37) at (18.5,2.5) {};
\node [circle, draw = black, fill = black, inner sep = 0.5mm] (v40) at (19.5,2.5) {};
\node [circle, draw = black, fill = black, inner sep = 0.5mm] (v42) at (20.5,2.5) {};
\node [circle, draw = black, fill = black, inner sep = 0.5mm] (v43) at (21.5,2.5) {};
\node [circle, draw = black, fill = black, inner sep = 0.5mm] (v45) at (22.5,2.5) {};
\node [circle, draw = black, fill = black, inner sep = 0.5mm] (v35) at (18,1.5) {};
\node [circle, draw = black, fill = black, inner sep = 0.5mm] (v39) at (19,1.5) {};
\node [circle, draw = black, fill = black, inner sep = 0.5mm] (v41) at (20,1.5) {};
\node [circle, draw = black, fill = black, inner sep = 0.5mm] (v44) at (21,1.5) {};
\node [circle, draw = black, fill = black, inner sep = 0.5mm] (v34) at (18,0.5) {};
\node [circle, draw = black, fill = black, inner sep = 0.5mm] (v38) at (19,0.5) {};
\draw (v34) -- (v35) -- (v36);
\draw (v35) -- (v37);
\draw (v38) -- (v39) -- (v37);
\draw (v39) -- (v40);
\draw (v38) -- (v41) -- (v42);
\draw (v41) -- (v43);
\draw (v38) -- (v44) -- (v40);
\draw (v44) -- (v45);
\draw (v34) -- (v46) -- (v36);
\draw (v46) -- (v37);
\node at (1.5,0) {$a$};
\node at (1,3.1) {$b$};
\node at (2,3) {$c$};
\node at (5.5,0) {$a$};
\node at (7,3.1) {$b$};
\node at (8,3) {$a$};
\node at (11.5,0) {$a$};
\node at (12,3.1) {$b$};
\node at (15,3) {$c$};
\node at (18,0) {$a$};
\node at (17.5,3.1) {$b$};
\node at (18.5,3) {$c$};
\node at (-2.5,-1) {$\mathcal{S}_1=\mathcal{Q}_1$};
\node at (1,-1) {$\mathcal{Q}_2$};
\node at (5,-1) {$\mathcal{Q}_3$};
\node at (11,-1) {$\mathcal{Q}_4$};
\node at (18.5,-1) {$\mathcal{Q}_5=\mathcal{P}$};
\end{tikzpicture}$$
\caption{Construction sequence of $\mathcal{P}$}\label{fig:toral}
\end{figure}
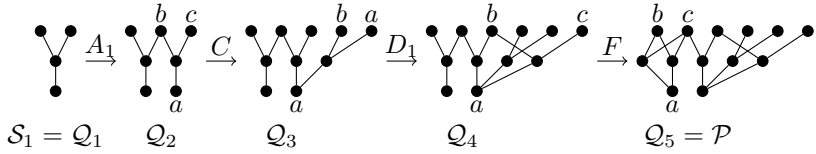
\end{Ex}

As a direct consequence of Remark 2 in \textbf{\cite{CM}} we get the following result.

\begin{theorem}\label{thm:indform}
If $\mathcal{P}$ is a toral poset, then $\ind\mathfrak{g}_A(\mathcal{P})=|Rel_E(\mathcal{P})|-|Ext(\mathcal{P})|+1$.
\end{theorem}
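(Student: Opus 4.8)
The plan is to obtain Theorem~\ref{thm:indform} as a genuine corollary of Remark~2 of \textbf{\cite{CM}}, so the only real work is to check that every toral poset falls under the scope of that remark. Concretely, I would first record the mild hypotheses behind \textbf{\cite{CM}}, Remark~2 — that the comparability graph of $\mathcal{P}$ is connected and that each element of $\mathcal{P}$ is $\succeq$ some minimal element and $\preceq$ some maximal element (so that $Ext(\mathcal{P})$ and $Rel_E(\mathcal{P})$ really control $\ind\mathfrak{g}_A(\mathcal{P})$) — and then verify these hold for toral posets. Since a toral poset comes equipped with a construction sequence $\mathcal{S}_1=\mathcal{Q}_1\subset\mathcal{Q}_2\subset\cdots\subset\mathcal{Q}_n=\mathcal{P}$ (Definition~\ref{def:toral}), this is an induction on $n$. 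For the base case one checks the posets of Figure~\ref{fig:bb} individually: each is connected, each of its elements sits between an extreme pair, and, being the poset of a toral-pair, $\mathfrak{g}_A$ is Frobenius while a direct count gives $|Rel_E|=|Ext|-1$; thus the asserted identity reads $0=0$ there (this also serves as a sanity check that the identity in Theorems~\ref{thm:h1bb}--\ref{thm:h3bb} is the right one).

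For the inductive step, the key observation is that each of the twelve combination rules of Section~\ref{sec:toralpos} builds $\mathcal{Q}_i$ from $\mathcal{Q}_{i-1}$ and the toral-pair poset $\mathcal{S}_i$ by identifying minimal elements of $\mathcal{S}_i$ with minimal elements of $\mathcal{Q}_{i-1}$ and maximal elements with maximal elements, and nothing else. Hence connectivity is preserved, since after the identification $\mathcal{Q}_{i-1}$ and $\mathcal{S}_i$ share at least one vertex and each is connected by induction (for $\mathcal{S}_i$, by the base case); and the ``between-extremes'' property is preserved, because an element inherited from $\mathcal{Q}_{i-1}$ (resp. $\mathcal{S}_i$) retains a witnessing minimal and maximal element within $\mathcal{Q}_{i-1}$ (resp. $\mathcal{S}_i$), and such a witness is either still extreme in $\mathcal{Q}_i$ or was identified with an extreme element of the other poset, which is again extreme and still a valid witness; moreover the interior elements contributed by $\mathcal{S}_i$ lie, by construction, between two identified extreme vertices. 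With the hypotheses of \textbf{\cite{CM}}, Remark~2 verified for $\mathcal{P}$, the formula $\ind\mathfrak{g}_A(\mathcal{P})=|Rel_E(\mathcal{P})|-|Ext(\mathcal{P})|+1$ follows at once.

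As a consistency check one could instead argue directly, tracking the triple $(|Rel_E(\mathcal{Q}_i)|,\,|Ext(\mathcal{Q}_i)|,\,\ind\mathfrak{g}_A(\mathcal{Q}_i))$ along the construction sequence: each rule identifies only extreme elements, so no new comparability is forced between two previously unrelated extreme elements except in the rules (such as $D_1,D_2,F$, and the analogous part of $E_1,E_2,B,G_1,G_2,H$) that glue a pair of extremes which are already comparable on the $\mathcal{Q}$-side to the related pair $a\preceq b$ of $\mathcal{S}_i$ — each such identification contributes exactly one new extreme-to-extreme relation and one new independent cycle, raising the index by one, while the single-vertex rules $A_1,A_2,C$ leave the index unchanged (the glued algebra being, up to the trace condition, a one-point amalgam of Frobenius-and-$\mathcal{Q}$ pieces). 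A rule-by-rule tally then shows $\Delta(|Rel_E|-|Ext|+1)=\Delta\ind\mathfrak{g}_A$ at every step, so the equality propagates from the base case. The main obstacle, in either route, is the index side: in the first it is pinning down exactly the hypotheses of \textbf{\cite{CM}}, Remark~2 and confirming their stability under the twelve rules; in the second it is the claim that amalgamating a comparable pair of extremes raises $\ind\mathfrak{g}_A$ by precisely one — i.e. a gluing/amalgamation formula for the index of type-A Lie poset algebras, which is essentially what the cited remark supplies. Everything else (connectivity, the between-extremes property, the base-case arithmetic) is routine.
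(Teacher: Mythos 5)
Your first route cannot work as stated because the hypotheses you propose to verify are not the content of Remark~2 of \textbf{\cite{CM}} and are nowhere near sufficient for the formula. The condition that every element lies above some minimal and below some maximal element is automatic for any finite poset, so your verification reduces to connectedness of the comparability graph; but connectedness alone does not give $\ind\mathfrak{g}_A(\mathcal{P})=|Rel_E(\mathcal{P})|-|Ext(\mathcal{P})|+1$. For the three-element chain $p_1\preceq p_2\preceq p_3$ one has $|Rel_E|-|Ext|+1=0$, yet $\mathfrak{g}_A(\mathcal{P})$ is five-dimensional, hence of odd dimension and therefore not Frobenius, so its index is positive. Thus the properties you check for toral posets (connectivity, base-case arithmetic, stability under the twelve rules) are the routine part and do not engage the actual index statement; whatever Remark~2 of \textbf{\cite{CM}} asserts, it must exploit finer structure (how the non-extremal elements sit inside the glued pieces), and that is exactly the part your argument never touches. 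Since the paper itself offers only the citation, the burden of a self-contained proof is precisely the index computation, and your proposal leaves it to the same citation while misdescribing what it would have to say.

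Your ``consistency check'' is closer to a real argument (an induction along the construction sequence of Definition~\ref{def:toral}, comparing the change of both sides rule by rule), but the tally is backwards and its key step is circular. When a pair of extremes that is already comparable on the $\mathcal{Q}$-side is identified with the related pair $a\preceq b$ of $\mathcal{S}_i$ (rules $\textup{D}_1,\textup{D}_2$, and the corresponding part of $\textup{F}$), the relation $a\preceq b$ is identified with the existing relation $x\preceq y$, so $|Rel_E|$ and $|Ext|$ each grow by the same amount and, by the paper's own accounting in the theorem following Theorem~\ref{thm:wedge}, the index is unchanged; the index rises by one precisely for the rules that identify a pair of extremes which are \emph{not} comparable on the $\mathcal{Q}$-side ($\textup{E}_1,\textup{E}_2,\textup{B},\textup{G}_1,\textup{G}_2$, and by two for $\textup{H}$). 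Attributing the increase to $\textup{D}_1,\textup{D}_2,\textup{F}$ would already break the claimed equality of increments, since for those rules $\Delta\bigl(|Rel_E|-|Ext|+1\bigr)=0$. More importantly, you concede that the assertion about how $\ind\mathfrak{g}_A$ changes under such an amalgamation is ``essentially what the cited remark supplies'': that is, the Lie-theoretic half of every step --- the only nontrivial half --- is assumed rather than proved. As it stands, the proposal establishes the combinatorial bookkeeping on the right-hand side but not the behavior of the index, so Theorem~\ref{thm:indform} is not actually derived.
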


\begin{theorem}\label{thm:wedge}
If $\mathcal{P}$ is a toral poset, then $\Sigma(\mathcal{P})$ is homotopic to a wedge sum of $\ind\mathfrak{g}_A(\mathcal{P})$ one-spheres.
\end{theorem}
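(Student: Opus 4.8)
I would argue by induction on the length $n$ of a construction sequence $\mathcal{S}_1=\mathcal{Q}_1\subset\mathcal{Q}_2\subset\cdots\subset\mathcal{Q}_n=\mathcal{P}$, establishing the (formally stronger) statement that every $\Sigma(\mathcal{Q}_i)$ is homotopy equivalent to a connected wedge of $\ind\mathfrak{g}_A(\mathcal{Q}_i)$ one-spheres. Any initial segment of a construction sequence is again one, so each $\mathcal{Q}_i$ is toral and Theorem~\ref{thm:indform} applies to it throughout. The base case $n=1$ is immediate: $\mathcal{Q}_1=\mathcal{S}_1$ underlies a toral-pair, so $\Sigma(\mathcal{Q}_1)$ is contractible by \textbf{(P3)} of Definition~\ref{def:toralpair}, and $\ind\mathfrak{g}_A(\mathcal{S}_1)=0$.

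For the inductive step, write $\mathcal{Q}:=\mathcal{Q}_{n-1}$ and let $\mathcal{S}:=\mathcal{S}_n$ be the toral-pair poset glued to $\mathcal{Q}$ by one of the twelve rules, identifying a set $V$ of $|V|\in\{1,2,3\}$ extremal elements of $\mathcal{S}$ with extremal elements of $\mathcal{Q}$ of the matching (minimal/maximal) type. The crucial combinatorial observation is that each element of $V$ is extremal in $\mathcal{Q}$, in $\mathcal{S}$, and --- of the same type --- in $\mathcal{P}$; hence no chain of $\mathcal{P}$ can meet both $\mathcal{Q}\setminus V$ and $\mathcal{S}\setminus V$, since a chain doing so would force a comparability running ``through'' a vertex of $V$ in violation of its extremality. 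It follows that
\[
\Sigma(\mathcal{P})=\Sigma(\mathcal{Q})\cup\Sigma(\mathcal{S}),\qquad Z:=\Sigma(\mathcal{Q})\cap\Sigma(\mathcal{S}),
\]
where $Z$ is the subcomplex whose faces are the subsets of $V$ that happen to be chains in both $\mathcal{Q}$ and $\mathcal{S}$. Since the extremal elements of $\mathcal{S}$ form, up to duality, a poset with a single bottom and otherwise pairwise-incomparable elements, $Z$ contains no $2$-face and no $1$-cycle, so $Z$ is a \emph{forest}; write $c$ for its number of components (each a contractible tree), and note its edges number $|V|-c$ and are exactly the relations within $V$ common to $\mathcal{Q}$ and $\mathcal{S}$.

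Because $Z$ is a common subcomplex, $\Sigma(\mathcal{P})$ is the homotopy pushout of $\Sigma(\mathcal{Q})\hookleftarrow Z\hookrightarrow\Sigma(\mathcal{S})$. By the inductive hypothesis $\Sigma(\mathcal{Q})\simeq\bigvee_{d}S^1$ is connected with $d=\ind\mathfrak{g}_A(\mathcal{Q})$, and $\Sigma(\mathcal{S})$ is contractible by \textbf{(P3)}; collapsing $\Sigma(\mathcal{S})$ to a point exhibits $\Sigma(\mathcal{P})$ as $\Sigma(\mathcal{Q})/Z$, and since $Z$ is a disjoint union of $c$ contractible complexes this is $\Sigma(\mathcal{Q})$ with $c$ distinct points amalgamated into one, hence $\Sigma(\mathcal{P})\simeq\bigvee_{d+c-1}S^1$. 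It remains to see that $d+c-1=\ind\mathfrak{g}_A(\mathcal{P})$. Gluing along extremal elements neither loses nor gains extremal elements and (by the same ``no crossing through $V$'' principle) introduces no new relation among extremal elements; therefore $|Ext(\mathcal{P})|=|Ext(\mathcal{Q})|+|Ext(\mathcal{S})|-|V|$ and $|Rel_E(\mathcal{P})|=|Rel_E(\mathcal{Q})|+|Rel_E(\mathcal{S})|-(|V|-c)$. Plugging these, together with $|Rel_E(\mathcal{S})|=|Ext(\mathcal{S})|-1$ (as $\mathcal{S}$ is Frobenius), into Theorem~\ref{thm:indform} applied to $\mathcal{Q}$ and to $\mathcal{P}$ gives $\ind\mathfrak{g}_A(\mathcal{P})=d+(c-1)$, which closes the induction.

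\smallskip
\noindent The homotopy-theoretic ingredient here --- recognizing $\Sigma(\mathcal{P})$ as a homotopy pushout and computing it --- is entirely routine. The point requiring care, and the main obstacle, is the discrete geometry: proving cleanly that $\Sigma(\mathcal{P})=\Sigma(\mathcal{Q})\cup\Sigma(\mathcal{S})$ with intersection the asserted forest $Z$, and that no extremal elements or extremal relations are created in the gluing. These facts are all consequences of the single observation that identified vertices remain extremal of their original type, but making that argument airtight requires a careful pass through how the twelve construction rules act on $Ext$, $Rel_E$, and chains.
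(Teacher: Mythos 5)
Your argument is correct, but it takes a genuinely different route from the paper. The paper's proof is a single global collapse: since each $\Sigma(\mathcal{S}_i)$ is contractible, it is homotoped onto the tree spanned by $Ext(\mathcal{S}_i)$ and $Rel_E(\mathcal{S}_i)$; doing this block by block inside $\Sigma(\mathcal{P})$ leaves the connected graph with vertex set $Ext(\mathcal{P})$ and edge set $Rel_E(\mathcal{P})$, whose cycle rank $|Rel_E(\mathcal{P})|-|Ext(\mathcal{P})|+1$ equals $\ind\mathfrak{g}_A(\mathcal{P})$ by Theorem~\ref{thm:indform}. You never form this graph; instead you induct along the construction sequence, writing $\Sigma(\mathcal{Q}_i)=\Sigma(\mathcal{Q}_{i-1})\cup\Sigma(\mathcal{S}_i)$ with intersection a forest $Z$ on the set $V$ of identified extremal vertices, compute the homotopy pushout (collapsing the contractible new block gives $\Sigma(\mathcal{Q}_{i-1})/Z$, i.e.\ $c-1$ new circles if $Z$ has $c$ components), and check via the $Ext$/$Rel_E$ bookkeeping that the index grows by exactly $c-1$ -- in effect re-deriving, inside this proof, the rule-by-rule index increments the paper only records later in its characterization of Frobenius toral posets. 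What your version buys is rigor at precisely the point where the paper is glossy: contractibility of $\Sigma(\mathcal{S}_i)$ alone does not license replacing it, inside the larger complex, by its extremal tree -- one needs the replacement to be compatible with the attaching vertices, which your cofibration/quotient formulation supplies, together with the combinatorial facts (no chain of $\mathcal{P}$ meets both $\mathcal{Q}_{i-1}\setminus V$ and $\mathcal{S}_i\setminus V$, no new relations arise within either piece or among extremal elements, and identified vertices stay extremal of their original type); these facts all hold by the extremality argument you indicate, and your deferral of the rule-by-rule verification is routine rather than a gap, since I checked that your counts $|Ext(\mathcal{P})|=|Ext(\mathcal{Q})|+|Ext(\mathcal{S})|-|V|$, $|Rel_E(\mathcal{P})|=|Rel_E(\mathcal{Q})|+|Rel_E(\mathcal{S})|-(|V|-c)$, and $|Rel_E(\mathcal{S})|=|Ext(\mathcal{S})|-1$ reproduce the paper's increments for every rule. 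What the paper's version buys is brevity and transparency of the statement itself: the wedge number is visibly the first Betti number of the extremal-relation graph. (Your accounting also exposes a slip in the paper's proof, which writes $|Rel_E(\mathcal{P})|-|\mathcal{P}|+1$ where $|Ext(\mathcal{P})|$ is meant.)
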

\begin{proof}
Let $\mathcal{P}$ be a toral poset constructed from the toral-pairs $\{(\mathcal{S}_i,F_{\mathcal{S}_{i}})\}_{i=1}^n$ with construction sequence $\mathcal{S}_1=\mathcal{Q}_1\subset\mathcal{Q}_2\subset\hdots\subset\mathcal{Q}_n=\mathcal{P}$. Since each $\Sigma(\mathcal{S}_i)$ is contractible, for $i=1,\hdots,n$, $\Sigma(\mathcal{S}_i)$ is homotopic to the Hasse diagram of $\mathcal{S}_{i_{Rel_E(\mathcal{S}_i)}}$. Performing this homotopy sequentially for each $\Sigma(\mathcal{S}_i)\subset\Sigma(\mathcal{P})$ from $i=1$ to $n$, we arrive at the simplicial complex $\Sigma(\mathcal{P})'$. Note that $\Sigma(\mathcal{P})'$ is just the Hasse diagram of $\mathcal{P}_{Rel_E(\mathcal{P})}$. As $\Sigma(\mathcal{P})'$ is a connected graph with $|Rel_E(\mathcal{P})|$ edges and $|Ext(\mathcal{P})|$ vertices, $\Sigma(\mathcal{P})'$ is homotopic to a wedge sum of $|Rel_E(\mathcal{P})|-|\mathcal{P}|+1$ one-spheres. Thus, considering Theorem~\ref{thm:indform}, the result follows.
\end{proof}

\begin{theorem}
Let $\mathcal{P}$ be a toral poset constructed from the toral-pairs $\{(\mathcal{S}_i,F_{\mathcal{S}_{i}})\}_{i=1}^n$ with construction sequence $\mathcal{S}_1=\mathcal{Q}_1\subset\mathcal{Q}_2\subset\hdots\subset\mathcal{Q}_n=\mathcal{P}$. Then $\mathcal{P}$ is Frobenius if and only if $\mathcal{Q}_i$ is formed from $\mathcal{Q}_{i-1}$ and $\mathcal{S}_i$ by applying rules from the set $\{\textup{A}_1, \textup{A}_2, \textup{C}, \textup{D}_1, \textup{D}_2, \textup{F}\}$, for $i=2,\hdots,n$.
\end{theorem}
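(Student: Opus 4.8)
The plan is to reduce the statement to the index formula of Theorem~\ref{thm:indform} together with a short bookkeeping computation for each combining rule. For a toral poset $\mathcal{R}$ set $\iota(\mathcal{R})=\ind\mathfrak{g}_A(\mathcal{R})=|Rel_E(\mathcal{R})|-|Ext(\mathcal{R})|+1\ge 0$. Each $\mathcal{Q}_i$ is itself a toral poset (its construction sequence is a prefix of that of $\mathcal{P}$), so this formula applies to it, and $\iota(\mathcal{Q}_1)=\iota(\mathcal{S}_1)=0$ because the poset of a toral-pair is Frobenius. Thus it suffices to show that a combining step leaves $\iota$ unchanged when the rule used lies in $\{A_1,A_2,C,D_1,D_2,F\}$ and strictly increases $\iota$ otherwise; since $\iota\ge 0$ and, as we will see, $\iota$ never decreases along the construction, this yields $\iota(\mathcal{P})=0$ exactly when every step uses one of the six listed rules.

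First I would pin down how $Rel_E$ and $Ext$ behave under a single combining step. Because the rules identify minimal elements of $\mathcal{S}_i$ only with minimal elements of $\mathcal{Q}_{i-1}$ and maximal with maximal, no new comparabilities are created by transitivity: a glued minimal vertex has trivial down-set and a glued maximal vertex has trivial up-set in both posets, so any comparability chain in $\mathcal{Q}_i$ cannot switch components through a glued vertex, and hence every comparability of $\mathcal{Q}_i$ already lies in $\mathcal{S}_i$ or in $\mathcal{Q}_{i-1}$. The same observation shows no interior element becomes extremal and no extremal element becomes interior, so $Ext(\mathcal{Q}_i)$ is the union of the images of $Ext(\mathcal{Q}_{i-1})$ and $Ext(\mathcal{S}_i)$, while $Rel_E(\mathcal{Q}_i)$ is, as a set of relations, the union of $Rel_E(\mathcal{Q}_{i-1})$ with the image of $Rel_E(\mathcal{S}_i)$, the overlap being exactly those relations of $Rel_E(\mathcal{S}_i)$ joining two identified extremal elements whose image already lies in $Rel_E(\mathcal{Q}_{i-1})$. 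Since $(\mathcal{S}_i,F_i)$ is a toral-pair, $\mathfrak{g}_A(\mathcal{S}_i)$ is Frobenius, so $|Rel_E(\mathcal{S}_i)|=|Ext(\mathcal{S}_i)|-1$; concretely, the Hasse diagram of the extremal subposet of $\mathcal{S}_i$ is a path on $|Ext(\mathcal{S}_i)|\in\{2,3\}$ vertices.

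Now let $k$ be the number of extremal elements of $\mathcal{S}_i$ identified by the rule and $p$ the number of covering relations of the extremal subposet of $\mathcal{S}_i$ joining two such elements whose image already lies in $Rel_E(\mathcal{Q}_{i-1})$. The previous paragraph gives $|Ext(\mathcal{Q}_i)|=|Ext(\mathcal{Q}_{i-1})|+|Ext(\mathcal{S}_i)|-k$ and $|Rel_E(\mathcal{Q}_i)|=|Rel_E(\mathcal{Q}_{i-1})|+|Ext(\mathcal{S}_i)|-1-p$, whence $\iota(\mathcal{Q}_i)-\iota(\mathcal{Q}_{i-1})=k-1-p$. As the extremal subposet of $\mathcal{S}_i$ is a tree, the edges of its Hasse diagram among any $k$ vertices form a forest, so $p\le k-1$; thus $\iota$ is non-decreasing along the construction, and a step preserves $\iota$ precisely when $p=k-1$, i.e. when the $k$ identified vertices together with the already-present images of the relevant $\mathcal{S}_i$-covering relations form a tree. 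Inspecting the twelve rules (the up-down symmetry of $Ext(\mathcal{S}_i)$ halves the work): $C,A_1,A_2$ have $k=1$, so $p=0=k-1$; $D_1,D_2$ have $k=2$ with the two identified vertices adjacent in $\mathcal{S}_i$ and their joining relation present in $\mathcal{Q}_{i-1}$ by the ``$\sim_{\mathcal{Q}}$'' hypothesis, so $p=1=k-1$; $F$ has $k=3$ with both $\mathcal{S}_i$-covering relations present by its two ``$\sim_{\mathcal{Q}}$'' hypotheses, so $p=2=k-1$. By contrast $B$ (here $k=2$ but the two identified vertices are the non-adjacent leaves of the path, so $p=0$), $E_1,E_2$ (here $k=2$ but the joining relation is absent from $\mathcal{Q}_{i-1}$, so $p=0$), $G_1,G_2$ (here $k=3$ with only one of the two relations present, so $p=1$), and $H$ (here $k=3$ with neither present, so $p=0$) all have $p\le k-2$, so $\iota(\mathcal{Q}_i)-\iota(\mathcal{Q}_{i-1})\ge 1$.

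Combining these facts finishes the proof: if every step uses a rule from $\{A_1,A_2,C,D_1,D_2,F\}$ then $\iota(\mathcal{P})=\iota(\mathcal{Q}_1)=0$, so $\mathcal{P}$ is Frobenius; and if some step uses one of the other six rules then $\iota$ jumps by at least $1$ there and, being non-decreasing, never returns to $0$, so $\iota(\mathcal{P})\ge 1$ and $\mathcal{P}$ is not Frobenius. I expect the main obstacle to be the bookkeeping of the third paragraph---in particular, verifying carefully that gluing along extremal elements introduces no $Rel_E$-relations (and no new extremal elements) beyond the ones accounted for, and reading off $k$ and $p$ correctly for each rule under both orientations of $Ext(\mathcal{S}_i)$; the identity $\iota(\mathcal{Q}_i)-\iota(\mathcal{Q}_{i-1})=k-1-p$ is what makes this manageable.
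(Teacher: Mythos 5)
Your proposal is correct and follows essentially the same route as the paper: the paper's proof also invokes Theorem~\ref{thm:indform} and simply records, rule by rule, the change in $|Rel_E|-|Ext|+1$ (zero for $\textup{A}_1,\textup{A}_2,\textup{C},\textup{D}_1,\textup{D}_2,\textup{F}$, strictly positive for the rest), concluding from $\ind\mathfrak{g}_A(\mathcal{Q}_1)=0$ and monotonicity. Your $k-1-p$ bookkeeping and the check that gluing extremal elements creates no new comparabilities or extremal/interior changes just make explicit what the paper asserts without comment.
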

\begin{proof}
Using Theorem~\ref{thm:indform}, for $i=2,\hdots,n$, if $\mathcal{Q}_{i}$ is formed from $\mathcal{Q}_{i-1}$ and $\mathcal{S}_i$ by applying rule 
\begin{itemize}
    \item $\textup{A}_1$, $\textup{A}_2$, or $\textup{C}$, then $\ind\mathfrak{g}_A(\mathcal{Q}_{i})=\ind\mathfrak{g}_A(\mathcal{Q}_{i-1})+2-2=\ind\mathfrak{g}_A(\mathcal{Q}_{i-1});$
    \item $\textup{B}$, $\textup{E}_1$, or $\textup{E}_2$, then $\ind\mathfrak{g}_A(\mathcal{Q}_{i})=\ind\mathfrak{g}_A(\mathcal{Q}_{i-1})+2-1=\ind\mathfrak{g}_A(\mathcal{Q}_{i-1})+1;$
    \item $\textup{D}_1$ or $\textup{D}_2$, then $\ind\mathfrak{g}_A(\mathcal{Q}_{i})=\ind\mathfrak{g}_A(\mathcal{Q}_{i-1})+1-1=\ind\mathfrak{g}_A(\mathcal{Q}_{i-1});$
    \item $\textup{F}$, then $\ind\mathfrak{g}_A(\mathcal{Q}_{i})=\ind\mathfrak{g}_A(\mathcal{Q}_{i-1});$
    \item $\textup{G}_1$ or $\textup{G}_2$, then $\ind\mathfrak{g}_A(\mathcal{Q}_{i})=\ind\mathfrak{g}_A(\mathcal{Q}_{i-1})+1;$
    \item $\textup{H}$, then $\ind\mathfrak{g}_A(\mathcal{Q}_{i})=\ind\mathfrak{g}_A(\mathcal{Q}_{i-1})+2.$
\end{itemize}
Thus, as $\ind\mathfrak{g}_A(\mathcal{Q}_1)=\ind\mathfrak{g}_A(\mathcal{S}_1)=0$, the result follows.
\end{proof}

\section{Toral functionals and spectrum}\label{sec:spec}

In this section, given a Frobenius, toral poset $\mathcal{P}$ constructed from the toral-pairs $\{(\mathcal{S}_i,F_{\mathcal{S}_{i}})\}_{i=1}^n$, we provide an inductive procedure for constructing a Frobenius functional $F_{\mathcal{P}}\in(\mathfrak{g}_A(\mathcal{P}))^*$ from the functionals $F_{\mathcal{S}_{i}}$, for $i=1,\hdots,n$. Coincidentally, we obtain an alternative proof that toral posets formed by applying rules from the set $\{\textup{A}_1, \textup{A}_2, \textup{C}, \textup{D}_1, \textup{D}_2,\textup{F}\}$ are Frobenius (see Theorem~\ref{thm:toralFun}). Furthermore, we characterize the spectrum of all Frobenius, type-A Lie poset algebras which correspond to toral posets (see Theorem~\ref{thm:toralspec}). 

\begin{remark}
Let $\mathcal{P}$ be a toral poset constructed from the toral-pairs $\{(\mathcal{S}_i,F_{\mathcal{S}_{i}})\}_{i=1}^n$ with construction sequence $\mathcal{S}_1=\mathcal{Q}_1\subset\mathcal{Q}_2\subset\hdots\subset\mathcal{Q}_n=\mathcal{P}$. Throughout this section, in the notation of Section~\ref{sec:toralpos}, if $\mathcal{Q}=\mathcal{Q}_{i-1}$ and $\mathcal{S}=\mathcal{S}_i$, for $i=2,\hdots,n$, then we denote the elements of $\{a,x\}$ by $x_i$, $\{b,y\}$ by $y_i$, and $\{c,z\}$ by $z_i$.
\end{remark}

\begin{definition}\label{def:toralfun}
If $\mathcal{P}$ is a Frobenius, toral poset constructed from the toral-pairs $\{(\mathcal{S}_i,F_{\mathcal{S}_{i}})\}_{i=1}^n$ with construction sequence  $\mathcal{S}_1=\mathcal{Q}_1\subset\mathcal{Q}_2\subset\hdots\subset\mathcal{Q}_n=\mathcal{P}$, then define the ``toral" functional $F_{\mathcal{Q}_i}\in(\mathfrak{g}_A(\mathcal{Q}_i))^*$, for $i=1,\hdots,n$, as follows:
\begin{itemize}[label={\large\textbullet}]
    \item $F_{\mathcal{Q}_{1}}=F_{\mathcal{S}_{1}}$;
    \item if $\mathcal{Q}_{i}$ is formed from $\mathcal{Q}_{i-1}$ and $\mathcal{S}_i$, for $1<i\le n$, by applying rule
    \begin{itemize}[label={\small\textbullet}]
        \item $\textup{A}_1,\textup{A}_2$, or $\textup{C}$, then $$F_{\mathcal{Q}_i}=F_{\mathcal{Q}_{i-1}}+F_{\mathcal{S}_{i}}.$$
        \item $\textup{D}_1$, then
        \[F_{\mathcal{Q}_i} =  \begin{cases} 
      F_{\mathcal{Q}_{i-1}}+F_{\mathcal{S}_{i}}-E^*_{x_i,y_i}, & \mathcal{S}_i\text{ has one minimal element}; \\
                                                &                        \\
       F_{\mathcal{Q}_{i-1}}+F_{\mathcal{S}_{i}}-E^*_{y_i,x_i}, & \mathcal{S}_i\text{ has one maximal element}.
   \end{cases}
\]
        \item $\textup{D}_2$, then
        \[F_{\mathcal{Q}_i} =  \begin{cases} 
      F_{\mathcal{Q}_{i-1}}+F_{\mathcal{S}_{i}}-E^*_{x_i,z_i}, & \mathcal{S}_i\text{ has one minimal element}; \\
                                                &                        \\
       F_{\mathcal{Q}_{i-1}}+F_{\mathcal{S}_{i}}-E^*_{z_i,x_i}, & \mathcal{S}_i\text{ has one maximal element}.
   \end{cases}
\]
        \item $\textup{F}$, then
        \[F_{\mathcal{Q}_i} =  \begin{cases} 
      F_{\mathcal{Q}_{i-1}}+F_{\mathcal{S}_{i}}-E^*_{x_i,y_i}-E^*_{x_i,z_i}, & \mathcal{S}_i\text{ has one minimal element}; \\
                                                &                        \\
       F_{\mathcal{Q}_{i-1}}+F_{\mathcal{S}_{i}}-E^*_{y_i,x_i}-E^*_{z_i,x_i}, & \mathcal{S}_i\text{ has one maximal element}.
   \end{cases}
\]
    \end{itemize}
\end{itemize}
\end{definition}

\begin{remark}\label{rem:finf}
Note that $E^*_{p,q}$ is a summand of $F_{\mathcal{Q}_i}$ if and only if $E^*_{p,q}$ is a summand of $F_{\mathcal{Q}_{i-1}}$ or $F_{\mathcal{S}_i}$.
\end{remark}

The following Lemma is an immediate consequence of Definition~\ref{def:toralfun}.

\begin{lemma}\label{lem:toralUD}
If $\mathcal{P}$ is a toral poset constructed from the toral-pairs $\{(\mathcal{S}_i,F_{\mathcal{S}_i})\}_{i=1}^n$ with the construction sequence $\mathcal{S}_1=\mathcal{Q}_1\subset\mathcal{Q}_2\subset\hdots\subset\mathcal{Q}_n=\mathcal{P}$, then
\begin{itemize}
    \item $F_{\mathcal{Q}_i}$ is small,
    \item $D_{F_{\mathcal{Q}_i}}(\mathcal{Q}_i)$ is an order ideal of $\mathcal{Q}_i$, $U_{F_{\mathcal{Q}_i}}(\mathcal{Q}_i)$ is a filter, and $B_{F_{\mathcal{Q}_i}}(\mathcal{Q}_i)=\emptyset$,
\end{itemize} 
for $i=1,\hdots,n$.
\end{lemma}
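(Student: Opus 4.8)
The plan is to argue by induction on $n$, the length of the construction sequence, using Definition~\ref{def:toralfun} to control how $F_{\mathcal{Q}_i}$ is built from $F_{\mathcal{Q}_{i-1}}$ and $F_{\mathcal{S}_i}$. The base case $i=1$ is immediate: $F_{\mathcal{Q}_1}=F_{\mathcal{S}_1}$ and $(\mathcal{S}_1,F_{\mathcal{S}_1})$ is a toral-pair, so $F_{\mathcal{S}_1}$ satisfies \textbf{(F1)} and \textbf{(F2)} of Definition~\ref{def:toralpair}, which is exactly what is asserted. For the inductive step, I would fix a construction rule $R\in\{\textup{A}_1,\textup{A}_2,\textup{C},\textup{D}_1,\textup{D}_2,\textup{F}\}$ (these are the only rules that appear in a Frobenius construction sequence, but in fact the statement of the Lemma only requires that $\mathcal{P}$ be toral, so I should also note that for the non-Frobenius rules $F_{\mathcal{Q}_i}$ is simply not defined by Definition~\ref{def:toralfun}; the Lemma is implicitly about those $\mathcal{Q}_i$ for which the toral functional is defined, i.e.\ the rules listed in Definition~\ref{def:toralfun}), and verify the three bullet conclusions for $\mathcal{Q}_i$ assuming them for $\mathcal{Q}_{i-1}$ and knowing them for $\mathcal{S}_i$ (the latter from property \textbf{(F1)}, \textbf{(F2)} of the toral-pair $(\mathcal{S}_i,F_{\mathcal{S}_i})$).

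The key observation driving the whole argument is that the edge set of $\Gamma_{F_{\mathcal{Q}_i}}$ is, by Remark~\ref{rem:finf} together with the subtractions in Definition~\ref{def:toralfun}, the union of the edge sets of $\Gamma_{F_{\mathcal{Q}_{i-1}}}$ and $\Gamma_{F_{\mathcal{S}_i}}$ with the edges among identified extremal elements removed exactly once (those removals correspond precisely to the $-E^*_{x_i,y_i}$, $-E^*_{x_i,z_i}$ terms, etc.). From this I would extract: (1) \emph{smallness} --- $\Gamma_{F_{\mathcal{Q}_{i-1}}}$ and $\Gamma_{F_{\mathcal{S}_i}}$ are spanning trees of the comparability graphs of $\mathcal{Q}_{i-1}$ and $\mathcal{S}_i$, and the identification glues them along a set of vertices whose induced edges in $\mathcal{S}_i$ are (by property \textbf{(F3)} of the toral-pair) present in $\Gamma_{F_{\mathcal{S}_i}}$; removing those redundant edges from the union yields a spanning tree of the comparability graph of $\mathcal{Q}_i$ --- here a small Euler-characteristic count ($V$ and $E$ add, overlap subtracts, and the definition removes exactly the cycles created) shows connectivity and acyclicity are preserved; (2) the \emph{sink/source structure} --- a vertex of $\mathcal{Q}_i$ not in the overlap keeps its $\Gamma$-in/out-degree from whichever of $\mathcal{Q}_{i-1}$, $\mathcal{S}_i$ it came from, so it stays a sink (resp.\ source, resp.\ neither) as before; for a vertex in the overlap, one checks case by case on the rule $R$ (using that $\mathcal{S}_i$ has a single minimal or single maximal element, and that in $\mathcal{Q}_{i-1}$ the glued vertices lie in $D_{F_{\mathcal{Q}_{i-1}}}\cup U_{F_{\mathcal{Q}_{i-1}}}$ since $B_{F_{\mathcal{Q}_{i-1}}}=\emptyset$) that the removal of exactly the right extremal edge keeps it a pure sink or pure source, so $B_{F_{\mathcal{Q}_i}}(\mathcal{Q}_i)=\emptyset$; and finally (3) \emph{$U$ is a filter, $D$ is an ideal} --- since the identifications are along extremal elements of both posets and $\mathcal{S}_i$ has a single minimal (or maximal) element, new order relations in $\mathcal{Q}_i$ can only go between an old element and a glued extremal element, so an element that was below something in $D$ stays in an ideal, and symmetrically for $U$; here I would lean on the toral-pair properties \textbf{(F2)} for $\mathcal{S}_i$ and the inductive hypothesis for $\mathcal{Q}_{i-1}$, plus the explicit description of which extremal elements each rule identifies.

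The main obstacle I anticipate is the bookkeeping in step (2): there are six rules, each with a ``one minimal element'' and a ``one maximal element'' sub-case, so the case analysis showing $B_{F_{\mathcal{Q}_i}}(\mathcal{Q}_i)=\emptyset$ is the place where something could genuinely go wrong --- one must confirm for each rule that the edge(s) subtracted in Definition~\ref{def:toralfun} are exactly the ones whose presence would otherwise turn a glued vertex into a ``both sink and source'' (or rather, force it to have both an in-edge and an out-edge in $\Gamma$). The cleanest way to handle this uniformly is to observe that after gluing, at each identified extremal vertex $v$ the functional $F_{\mathcal{Q}_{i-1}}$ contributes either only in-edges or only out-edges at $v$ (by the inductive hypothesis that $B_{F_{\mathcal{Q}_{i-1}}}=\emptyset$), $F_{\mathcal{S}_i}$ contributes similarly (by \textbf{(F2)} for the toral-pair), and when these two contributions would be of opposite type at $v$ --- which happens precisely at the rules $\textup{D}_1,\textup{D}_2,\textup{F}$ where a genuine edge among extremal elements gets doubled --- Definition~\ref{def:toralfun} deletes the $\mathcal{S}_i$-side edge at $v$, restoring a pure sink or source. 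The remaining rules $\textup{A}_1,\textup{A}_2,\textup{C}$ glue at a single vertex where the two sides happen to agree in orientation, so no subtraction is needed and none is made. Once this uniform description is in place, smallness and the filter/ideal conditions follow with only light additional argument, and the Lemma is proved; I would present it compactly rather than spelling out all twelve sub-cases, since each is routine given the uniform principle.
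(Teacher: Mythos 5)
Your overall plan (induction along the construction sequence, using \textbf{(F1)}--\textbf{(F3)} for $\mathcal{S}_i$ and the inductive hypothesis for $\mathcal{Q}_{i-1}$) is the right one --- the paper itself offers no argument, declaring the Lemma an immediate consequence of Definition~\ref{def:toralfun} --- and your handling of smallness and of the filter/ideal conditions, as well as your remark about the toral vs.\ Frobenius-toral hypothesis, is sound. However, the ``uniform principle'' on which you hang the crucial step $B_{F_{\mathcal{Q}_i}}(\mathcal{Q}_i)=\emptyset$ is false as stated. You claim that at an identified vertex $v$ the contributions of $F_{\mathcal{Q}_{i-1}}$ and $F_{\mathcal{S}_i}$ are ``of opposite type precisely at the rules $\textup{D}_1,\textup{D}_2,\textup{F}$'' and that the subtracted terms in Definition~\ref{def:toralfun} delete the offending $\mathcal{S}_i$-side edge and thereby restore a pure sink or source. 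In fact no conflict ever occurs, under any rule: the construction only identifies minimal elements with minimal elements and maximal elements with maximal elements, and in the directed graph of any functional $F_S$ every edge at a minimal element points away from it and every edge at a maximal element points toward it, so a glued minimal vertex receives only out-edges from both sides and a glued maximal vertex only in-edges. In particular, in rule $\textup{D}_1$ both copies of the doubled edge are oriented from $x_i$ to $y_i$; removing one of them changes nothing about the sink/source character of either endpoint. The subtractions in Definition~\ref{def:toralfun} are there solely to cancel the duplicated extremal edges so that $F_{\mathcal{Q}_i}$ again has $0$--$1$ coefficients and $\Gamma_{F_{\mathcal{Q}_i}}$ remains a tree (this is exactly the content of Remark~\ref{rem:finf}, and it is the role you correctly assign them in your smallness count); they play no role in guaranteeing $B_{F_{\mathcal{Q}_i}}(\mathcal{Q}_i)=\emptyset$.

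The consequence is that your proof, if written out from the principle you state, would be justifying the key step by a mechanism that does not exist (and, even granting the premise, deleting a single edge at $v$ need not purify a vertex at which the $\mathcal{S}_i$-side contributes several edges). The repair is easy and makes the argument shorter than what you anticipate: every vertex of $\mathcal{Q}_i$ either keeps exactly the edges it had in one of $\Gamma_{F_{\mathcal{Q}_{i-1}}}$, $\Gamma_{F_{\mathcal{S}_i}}$ (non-identified vertices, handled by the inductive hypothesis and \textbf{(F2)}), or is a glued extremal vertex, which by the observation above is a pure source (if minimal) or pure sink (if maximal) in $\Gamma_{F_{\mathcal{Q}_i}}$; hence $B_{F_{\mathcal{Q}_i}}(\mathcal{Q}_i)=\emptyset$, $U_{F_{\mathcal{Q}_i}}(\mathcal{Q}_i)=U_{F_{\mathcal{Q}_{i-1}}}(\mathcal{Q}_{i-1})\cup U_{F_{\mathcal{S}_i}}(\mathcal{S}_i)$ and dually for $D$, and since the relations of $\mathcal{Q}_i$ are just the union of those of $\mathcal{Q}_{i-1}$ and $\mathcal{S}_i$ (no new relations arise from gluing extremal elements of the same type), the filter and ideal properties follow from the inductive hypothesis and \textbf{(F2)} with no case analysis on the rules at all.
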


\begin{remark}
Recall that for a poset $\mathcal{P}$, elements of $\mathfrak{g}_A(\mathcal{P})$ are of the form $$\sum_{(p_i, p_j)\in Rel(\mathcal{P})}c_{p_i,p_j}E_{p_i,p_j}+\sum_{p_i\in\mathcal{P}}c_{p_i,p_i}E_{p_i,p_i},$$ where $\sum_{p_i\in\mathcal{P}}c_{p_i,p_i}=0$. Let $\mathcal{P}$ be a poset formed by combining the posets $\mathcal{S}$ and $\mathcal{Q}$ by identifying minimal elements or maximal elements. If $B\in\mathfrak{g}_A(\mathcal{P})$, then let $B|_{\mathcal{Q}}$ denote the restriction of $B$ to basis elements of $\mathfrak{g}_A(\mathcal{Q})$ and $B|_{\mathcal{S}}$ denote the restriction of $B$ to basis elements of $\mathfrak{g}_A(\mathcal{S})$.
\end{remark}

\begin{lemma}\label{lem:bl1}
If $\mathcal{P}$ is a Frobenius, toral poset and $B\in\mathfrak{g}(\mathcal{P})$ satisfies $F_{\mathcal{P}}([E_{p,p},B])=0$, for all $p\in\mathcal{P}$, then $E^*_{p,q}(B)=0$, for $E^*_{p,q}$ a summand of $F_{\mathcal{P}}$.
\end{lemma}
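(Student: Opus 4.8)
The plan is to induct on the length $n$ of a construction sequence $\mathcal{S}_1=\mathcal{Q}_1\subset\mathcal{Q}_2\subset\cdots\subset\mathcal{Q}_n=\mathcal{P}$. For the base case $n=1$, we have $\mathcal{P}=\mathcal{S}_1$ a poset of a toral-pair, and the claim follows directly from property \textbf{(F4)} of Definition~\ref{def:toralpair}: the condition $F_{\mathcal{S}_1}([E_{p,p},B])=0$ for all $p$ is precisely (part of) the hypothesis appearing in Remark~\ref{rem:proofheur}/the defining system, and \textbf{(F4)} says that any $B\in\mathfrak{g}(\mathcal{S}_1)\cap\ker(F_{\mathcal{S}_1})$ has $E^*_{p,q}(B)=0$ for all related $p\preceq q$; one needs to observe that $F_{\mathcal{S}_1}([E_{p,p},B])=0$ for all $p$ together with a diagonal-translation argument already forces $B$ (after subtracting a scalar multiple of the identity, which lies in $\ker F$ trivially on the relevant coordinates) to satisfy the same vanishing. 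Actually it is cleaner: since $E^*_{p,q}(B)$ for $E^*_{p,q}$ a summand of $F$ only involves off-diagonal coordinates, and \textbf{(F4)} controls exactly these, I will just need that the equations $F_{\mathcal{S}_1}([E_{p,p},B])=0$ are among the equations whose conjunction \textbf{(F4)} resolves.

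For the inductive step, suppose $\mathcal{Q}_n=\mathcal{P}$ is formed from $\mathcal{Q}:=\mathcal{Q}_{n-1}$ and $\mathcal{S}:=\mathcal{S}_n$ by one of the Frobenius-preserving rules $\{\textup{A}_1,\textup{A}_2,\textup{C},\textup{D}_1,\textup{D}_2,\textup{F}\}$ (these are the only ones that can appear since $\mathcal{P}$ is Frobenius, by the theorem preceding Section~\ref{sec:spec}), and let $B\in\mathfrak{g}(\mathcal{P})$ satisfy $F_{\mathcal{P}}([E_{p,p},B])=0$ for all $p\in\mathcal{P}$. The key structural fact is that for $p\in\mathcal{Q}$, the commutator $[E_{p,p},B]$ only involves basis elements $E_{r,s}$ with $\{r,s\}\cap\{p\}\neq\emptyset$; combined with Remark~\ref{rem:finf} (a summand of $F_{\mathcal{P}}$ is a summand of $F_{\mathcal{Q}}$ or of $F_{\mathcal{S}}$, possibly minus the "gluing" terms $E^*_{x_n,y_n}$, etc.), one checks that $F_{\mathcal{P}}([E_{p,p},B])$ decomposes as a sum of a $\mathcal{Q}$-part and an $\mathcal{S}$-part. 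I would show that $B|_{\mathcal{Q}}$ satisfies $F_{\mathcal{Q}}([E_{p,p},B|_{\mathcal{Q}}])=0$ for all $p\in\mathcal{Q}$ and that $B|_{\mathcal{S}}$ satisfies $F_{\mathcal{S}}([E_{p,p},B|_{\mathcal{S}}])=0$ for all $p\in\mathcal{S}$ — the only subtlety being at the identified extreme elements $x_n,y_n,z_n$, where the coordinate $E^*_{x_n,x_n}(B)$ etc. is shared, and where the deleted summands $E^*_{x_n,y_n}$ (for rules $\textup{D}_1,\textup{D}_2,\textup{F}$) must be accounted for. Once this reduction is in place, the inductive hypothesis applied to the toral poset $\mathcal{Q}$ (with functional $F_{\mathcal{Q}}$) gives $E^*_{p,q}(B)=0$ for every summand $E^*_{p,q}$ of $F_{\mathcal{Q}}$, and property \textbf{(F4)} applied to the toral-pair $(\mathcal{S},F_{\mathcal{S}})$ gives $E^*_{p,q}(B)=0$ for every summand $E^*_{p,q}$ of $F_{\mathcal{S}}$; together, again via Remark~\ref{rem:finf}, these cover every summand of $F_{\mathcal{P}}$, completing the induction.

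The main obstacle I anticipate is the bookkeeping at the glued extreme elements under rules $\textup{D}_1,\textup{D}_2,\textup{F}$, where $F_{\mathcal{P}}$ is $F_{\mathcal{Q}}+F_{\mathcal{S}}$ minus one or two summands of the form $E^*_{x_n,y_n}$ (or $E^*_{y_n,x_n}$). When I write out $F_{\mathcal{P}}([E_{x_n,x_n},B])=0$, the terms coming from these deleted summands must be shown not to interfere — that is, I need that the equation $F_{\mathcal{Q}}([E_{x_n,x_n},B|_{\mathcal{Q}}])=0$ together with $F_{\mathcal{S}}([E_{x_n,x_n},B|_{\mathcal{S}}])=0$ can still be separately extracted, using that $x_n\nsim y_n$ in $\mathcal{Q}$ (or in $\mathcal{S}$, depending on which has one minimal/maximal element) forces the relevant bracket contributions to live in disjoint coordinate blocks, or else that the deleted summand's contribution is itself already known to vanish. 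A careful case analysis over the six rules, organized by whether $\mathcal{S}$ has one minimal or one maximal extreme element, should dispatch this; I expect each individual case to be a short linear-algebra verification once the decomposition principle is stated cleanly.
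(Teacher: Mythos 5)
There is a genuine gap, and it sits at the heart of your inductive step (and your base case): you invoke property \textbf{(F4)} of the toral-pair $(\mathcal{S},F_{\mathcal{S}})$ to conclude that the summands of $F_{\mathcal{S}}$ vanish on $B|_{\mathcal{S}}$, but \textbf{(F4)} is a statement about elements of $\mathfrak{g}(\mathcal{S})\cap\ker(F_{\mathcal{S}})$, i.e., about $B$ satisfying \emph{all} the equations $F_{\mathcal{S}}([x,B])=0$ — including those coming from the off-diagonal brackets $[E_{p,q},B]$ with $p\preceq q$. The hypothesis of the lemma only supplies the diagonal equations $F_{\mathcal{P}}([E_{p,p},B])=0$, so you have no license to apply \textbf{(F4)}; knowing that $B$ satisfies a subset of the defining equations of the kernel does not transfer a conclusion that was established for elements satisfying all of them. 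The same objection applies verbatim to your base case. What is actually needed (and what the paper does) is an argument that the diagonal equations \emph{alone} force the summand coordinates to vanish: since $F_{\mathcal{S}_i}$ is small (\textbf{(F1)}), $\Gamma_{F_{\mathcal{S}_i}}$ is a spanning tree, and the diagonal equation at each vertex is exactly the signed incidence relation among the edge-coordinates of that tree; the paper exploits this by repeatedly peeling degree-one vertices of $\Gamma_{F_{\mathcal{S}_i}}$ lying in $\mathcal{S}_i\backslash Ext(\mathcal{S}_i)$ (the procedure $Proc(i)$), and then uses \textbf{(F3)} together with the diagonal equations at the non-identified extreme elements to kill the remaining summands between extreme elements, rule by rule. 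Your proposal is missing precisely this mechanism, and without it the conclusion for the $\mathcal{S}$-part is unproved.

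A secondary, related problem is the order of operations in your claimed decomposition. At an identified extreme element $x_n$ the single equation $F_{\mathcal{P}}([E_{x_n,x_n},B])=0$ mixes $\mathcal{Q}$-edge and $\mathcal{S}$-edge coordinates, and it cannot be split into ``$F_{\mathcal{Q}}([E_{x_n,x_n},B|_{\mathcal{Q}}])=0$ and $F_{\mathcal{S}}([E_{x_n,x_n},B|_{\mathcal{S}}])=0$'' until the $\mathcal{S}$-side coordinates appearing in it are already known to vanish — which is exactly the output of the peeling step and of the equations at the non-identified extreme vertices, not an input. Indeed, the clean separation statement you want ($B|_{\mathcal{Q}_{i-1}}\in\ker(F_{\mathcal{Q}_{i-1}})$, $B|_{\mathcal{S}_i}\in\ker(F_{\mathcal{S}_i})$) is Lemma~\ref{lem:bl2} of the paper, whose proof relies on the present lemma; assuming it here would be circular. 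Your outer induction and the use of Remark~\ref{rem:finf} to see that the summands of $F_{\mathcal{Q}_i}$ are covered by those of $F_{\mathcal{Q}_{i-1}}$ and $F_{\mathcal{S}_i}$ are fine, but the $\mathcal{S}$-side of the argument must be replaced by the tree/incidence argument sketched above (or an equivalent direct case analysis over the rules $\textup{A}_1,\textup{A}_2,\textup{C},\textup{D}_1,\textup{D}_2,\textup{F}$, as in the paper).
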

\begin{proof}
Assume $\mathcal{P}$ is constructed from the toral-pairs $\{(\mathcal{S}_i,F_{\mathcal{S}_{i}})\}_{i=1}^n$ with construction sequence $\mathcal{S}_1=\mathcal{Q}_1\subset\mathcal{Q}_2\subset\hdots\subset\mathcal{Q}_n=\mathcal{P}$. The proof is by induction on $i$. Throughout, we assume that $\mathcal{S}_i$, for $i=1,\hdots,n$, contains a single minimal element; the dual case follows via a similar argument. By property \textbf{(F1)} of toral-pairs, we know that $\Gamma_{F_{\mathcal{S}_{i}}}$ is a tree, for $i=1,\hdots,n$. Thus, for $i=1,\hdots,n$, we can apply the following inductive procedure, denoted $Proc(i)$:
\\*

\noindent
\textbf{Step 1:} Consider all degree-one vertices $p_1\in\mathcal{S}_i\backslash Ext(\mathcal{S}_i)$ of $\Gamma_{F_{\mathcal{S}_{i}}}=\Gamma_1$. Suppose $p_1$ is adjacent to $q_1$ in $\Gamma_1$. Then $$F_{\mathcal{Q}_{i}}([E_{p_1,p_1},B])=F_{\mathcal{S}_{i}}([E_{p_1,p_1},B|_{\mathcal{S}_i}])=E^*_{p_1,q_1}(B)=0\text{ }(\text{or }-E^*_{q_1,p_1}(B)=0),$$ where $E^*_{p_1,q_1}$ (or $E^*_{q_1,p_1}$) is a summand of $F_{\mathcal{P}}$. Remove such $p_1$ and edges $(p_1,q_1)$ from $\Gamma_1$ to form the directed graph $\Gamma_2$.
\\*

\noindent
\textbf{Step j:} Consider all degree-one vertices $p_j\in\mathcal{S}_i\backslash Ext(\mathcal{S}_i)$ of $\Gamma_{j}$. Suppose $p_j$ is adjacent to $q_j$ in $\Gamma_{j}$. Using the results of $\mathbf{Step\text{ }1}$ through $\mathbf{Step\text{ }j-1}$, $$F_{\mathcal{Q}_{i}}([E_{p_j,p_j},B])=F_{\mathcal{S}_{i}}([E_{p_j,p_j},B|_{\mathcal{S}_i}])=E^*_{p_j,q_j}(B)=0\text{ }(\text{or }-E^*_{q_j,p_j}(B)=0),$$ where $E^*_{p_j,q_j}$ (or $E^*_{q_j,p_j}$) is a summand of $F_{\mathcal{P}}$. Remove such $p_j$ and edges $(p_j,q_j)$ from $\Gamma_{j-1}$ to form the directed graph $\Gamma_j$.
\\*

\noindent
By properties \textbf{(F1)} and \textbf{(F3)} of toral-pairs and the fact that $\mathcal{S}_i$ is finite, there must exist a finite $m_i$ for which $\Gamma_{m_i}$ consists solely of the elements of $Ext(\mathcal{S}_i)$ along with the edges between them. Note that this implies $E^*_{p,q}(B)=0$, for $E^*_{p,q}$ a summand of $F_{\mathcal{Q}_i}$ with $p,q\in\mathcal{S}_i$ and either $p\in\mathcal{S}_i\backslash Ext(\mathcal{S}_i)$ or $q\in\mathcal{S}_i\backslash Ext(\mathcal{S}_i)$.

For the base case, $i=1$, $\mathcal{Q}_1=\mathcal{S}_1$ has maximal elements $y_1,z_1$ and minimal element $x_1$. Applying $Proc(1)$ it remains to consider $E^*_{x_1,y_1}(B)$ and $E^*_{x_1,z_1}(B)$; but the implications of $Proc(1)$ allow us to conclude that $$F_{\mathcal{Q}_1}([E_{y_1,y_1},B])=-E^*_{x_1,y_1}(B)=0$$ and $$F_{\mathcal{Q}_1}([E_{z_1,z_1},B])=-E^*_{x_1,z_1}(B)=0.$$ The base of the induction is thus established. 

Now, assume the result holds for $B\in\mathfrak{g}(\mathcal{Q}_{i-1})$, for $1<i\le n$. There are four cases to consider, based on the rules used in the construction sequence of $\mathcal{P}$.
\\*

\noindent
\textbf{Case 1:} $\mathcal{Q}_i$ is formed from $\mathcal{Q}_{i-1}$ and $\mathcal{S}_i$ by applying rule $\textup{A}_1$ or $\textup{A}_2$. Without loss of generality, assume $\mathcal{Q}_i$ is formed by applying rule $\textup{A}_1$. The implications of $Proc(i)$ allow us to conclude that $$F_{\mathcal{Q}_i}([E_{z_i,z_i},B])=F_{\mathcal{S}_i}([E_{z_i,z_i},B])=-E^*_{x_i,z_i}(B)=0,$$ so that $$F_{\mathcal{Q}_i}([E_{x_i,x_i},B])=F_{\mathcal{S}_i}([E_{x_i,x_i},B])=E^*_{x_i,y_i}(B)=0.$$ Thus, $E^*_{p,q}(B)=0$, for $E^*_{p,q}$ a summand of $F_{\mathcal{Q}_i}$ with $p,q\in\mathcal{S}_i$ and either $p\in\mathcal{S}_i\backslash \mathcal{Q}_i$ or $q\in\mathcal{S}_i\backslash \mathcal{Q}_i$. Considering Definition~\ref{def:toralfun}, $F_{\mathcal{Q}_i}([E_{p,p},B])=0$ reduces to $F_{\mathcal{Q}_{i-1}}([E_{p,p},B|_{\mathcal{Q}_{i-1}}])=0$, for $p\in\mathcal{Q}_{i-1}$. By the inductive hypothesis this implies that $E^*_{p,q}(B)=0$, for $E^*_{p,q}$ a summand of $F_{\mathcal{Q}_i}$.
\\*

\noindent
\textbf{Case 2:} $\mathcal{Q}_i$ is formed from $\mathcal{Q}_{i-1}$ and $\mathcal{S}_i$ by applying rule $\textup{C}$. The implications of $Proc(i)$ allow us to conclude that
$$F_{\mathcal{Q}_i}([E_{y_i,y_i},B])=F_{\mathcal{S}_i}([E_{y_i,y_i},B])=-E^*_{x_i,y_i}(B)=0$$ and $$F_{\mathcal{Q}_i}([E_{z_i,z_i},B])=F_{\mathcal{S}_i}([E_{z_i,z_i},B])=-E^*_{x_i,z_i}(B)=0.$$ Thus, $E^*_{p,q}(B)=0$, for $E^*_{p,q}$ a summand of $F_{\mathcal{Q}_i}$ with $p,q\in\mathcal{S}_i$ and either $p\in\mathcal{S}_i\backslash \mathcal{Q}_i$ or $q\in\mathcal{S}_i\backslash \mathcal{Q}_i$. Considering Definition~\ref{def:toralfun}, $F_{\mathcal{Q}_i}([E_{p,p},B])=0$ reduces to $F_{\mathcal{Q}_{i-1}}([E_{p,p},B|_{\mathcal{Q}_{i-1}}])=0$, for $p\in\mathcal{Q}_{i-1}$. By the inductive hypothesis this implies that $E^*_{p,q}(B)=0$, for $E^*_{p,q}$ a summand of $F_{\mathcal{Q}_i}$.
\\*

\noindent
\textbf{Case 3:} $\mathcal{Q}_i$ is formed from $\mathcal{Q}_{i-1}$ and $\mathcal{S}_i$ by applying rule $\textup{D}_1$ or $\textup{D}_2$. Without loss of generality, assume $\mathcal{Q}_i$ is formed by applying rule $\textup{D}_1$. The implications of $Proc(i)$ allow us to conclude that $$F_{\mathcal{Q}_i}([E_{z_i,z_i},B])=F_{\mathcal{S}_i}([E_{z_i,z_i},B])=-E^*_{x_i,z_i}(B)=0.$$ Thus, $E^*_{p,q}(B)=0$, for $E^*_{p,q}$ a summand of $F_{\mathcal{Q}_i}$ with $p,q\in\mathcal{S}_i$ and either $p\in\mathcal{S}_i\backslash \mathcal{Q}_i$ or $q\in\mathcal{S}_i\backslash \mathcal{Q}_i$. Considering Definition~\ref{def:toralfun}, $F_{\mathcal{Q}_i}([E_{p,p},B])=0$ reduces to $F_{\mathcal{Q}_{i-1}}([E_{p,p},B|_{\mathcal{Q}_{i-1}}])=0$, for $p\in\mathcal{Q}_{i-1}$. By the inductive hypothesis this implies that $E^*_{p,q}(B)=0$, for $E^*_{p,q}$ a summand of $F_{\mathcal{Q}_i}$.
\\*

\noindent
\textbf{Case 4:} $\mathcal{Q}_i$ is formed from $\mathcal{Q}_{i-1}$ and $\mathcal{S}_i$ by applying rule $\textup{F}$. The implications of $Proc(i)$ allow us to conclude that $E^*_{p,q}(B)=0$, for $E^*_{p,q}$ a summand of $F_{\mathcal{Q}_i}$ with $p,q\in\mathcal{S}_i$ and either $p\in\mathcal{S}_i\backslash \mathcal{Q}_i$ or $q\in\mathcal{S}_i\backslash \mathcal{Q}_i$. Considering Definition~\ref{def:toralfun}, $F_{\mathcal{Q}_i}([E_{p,p},B])=0$ reduces to $F_{\mathcal{Q}_{i-1}}([E_{p,p},B|_{\mathcal{Q}_{i-1}}])=0$, for $p\in\mathcal{Q}_{i-1}$. By the inductive hypothesis this implies that $E^*_{p,q}(B)=0$, for $E^*_{p,q}$ a summand of $F_{\mathcal{Q}_i}$.
\\*

\noindent
The induction establishes the result.
\end{proof}

\begin{lemma}\label{lem:bl2}
Let $\mathcal{P}$ be a Frobenius, toral poset constructed from the toral-pairs $\{(\mathcal{S}_i,F_i)\}_{i=1}^n$ with construction sequence  $\mathcal{S}_1=\mathcal{Q}_1\subset\mathcal{Q}_2\subset\hdots\subset\mathcal{Q}_n=\mathcal{P}$. If $1<i\le n$, then $B\in\mathfrak{g}(\mathcal{Q}_i)\cap\ker(F_{\mathcal{Q}_i})$ must satisfy $B|_{\mathcal{Q}_{i-1}}\in\mathfrak{g}(\mathcal{Q}_{i-1})\cap\ker(F_{\mathcal{Q}_{i-1}})$ and $B|_{\mathcal{S}_i}\in\mathfrak{g}(\mathcal{S}_i)\cap\ker(F_{\mathcal{S}_i})$.
\end{lemma}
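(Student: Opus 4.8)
The plan is to analyze the functional $F_{\mathcal{Q}_i}$ as built in Definition~\ref{def:toralfun} and exploit the fact that the relations defining $\ker(F_{\mathcal{Q}_i})$ are those coming from $F_{\mathcal{Q}_i}([E_{p,p},B])=0$ for $p\in\mathcal{Q}_i$ and $F_{\mathcal{Q}_i}([E_{p,q},B])=0$ for $p\preceq q$. First I would split into the four cases according to whether $\mathcal{Q}_i$ was formed via a rule from $\{\textup{A}_1,\textup{A}_2,\textup{C}\}$, $\{\textup{D}_1,\textup{D}_2\}$, or $\textup{F}$, and by symmetry assume throughout (as in Lemma~\ref{lem:bl1}) that $\mathcal{S}_i$ has a single minimal element. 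The central observation is that a basis element $E_{p,q}$ of $\mathfrak{g}_A(\mathcal{Q}_i)$ lies entirely in $\mathfrak{g}_A(\mathcal{Q}_{i-1})$ or entirely in $\mathfrak{g}_A(\mathcal{S}_i)$ — there are no ``cross'' basis elements $E_{p,q}$ with $p$ strictly interior to $\mathcal{S}_i$ and $q$ strictly interior to $\mathcal{Q}_{i-1}$, since such $p,q$ are incomparable in $\mathcal{Q}_i$. Hence a bracket $[E_{p,p},B]$ or $[E_{p,q},B]$ with $p,q\in\mathcal{Q}_{i-1}$ produces only basis elements of $\mathfrak{g}_A(\mathcal{Q}_{i-1})$, and dually for $\mathcal{S}_i$; the only coupling between the two pieces occurs at the identified extremal elements.

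Next I would run the argument for, say, rule $\textup{A}_1$ (identifying $b\in Ext(\mathcal{S}_i)$ with $y\in Ext(\mathcal{Q}_{i-1})$, renamed $y_i$). Here $F_{\mathcal{Q}_i}=F_{\mathcal{Q}_{i-1}}+F_{\mathcal{S}_i}$, and the two summands involve disjoint sets of $E^*_{p,q}$'s by Remark~\ref{rem:finf}. For $p\in\mathcal{Q}_{i-1}$, I claim $F_{\mathcal{Q}_i}([E_{p,q},B])=0$ collapses to $F_{\mathcal{Q}_{i-1}}([E_{p,q},B|_{\mathcal{Q}_{i-1}}])=0$: any basis element $E_{r,s}$ appearing in $[E_{p,q},B]$ that lies in $\mathfrak{g}_A(\mathcal{S}_i)\setminus\mathfrak{g}_A(\mathcal{Q}_{i-1})$ must have $r=y_i$ or $s=y_i$ and the other index interior to $\mathcal{S}_i$; such an $E^*_{r,s}$ is not a summand of $F_{\mathcal{Q}_i}$ unless it is a summand of $F_{\mathcal{S}_i}$, and by Lemma~\ref{lem:bl1} applied to $B$ (whose diagonal brackets vanish because $B\in\ker(F_{\mathcal{Q}_i})$ forces all $F_{\mathcal{Q}_i}([E_{p,p},B])=0$) that coefficient is already $0$. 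The same reasoning, with the roles of $\mathcal{Q}_{i-1}$ and $\mathcal{S}_i$ swapped, yields $F_{\mathcal{S}_i}([E_{p,q},B|_{\mathcal{S}_i}])=0$ for $p,q\in\mathcal{S}_i$. Together with the diagonal relations (which likewise decouple after using Lemma~\ref{lem:bl1}) this gives $B|_{\mathcal{Q}_{i-1}}\in\mathfrak{g}(\mathcal{Q}_{i-1})\cap\ker(F_{\mathcal{Q}_{i-1}})$ and $B|_{\mathcal{S}_i}\in\mathfrak{g}(\mathcal{S}_i)\cap\ker(F_{\mathcal{S}_i})$. Rules $\textup{A}_2$ and $\textup{C}$ are identical up to relabeling.

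For rules $\textup{D}_1,\textup{D}_2,\textup{F}$ the functional loses one or two summands $E^*_{x_i,y_i}$, $E^*_{x_i,z_i}$ when the extremal relations of $\mathcal{S}$ and $\mathcal{Q}$ are identified, so the bookkeeping must account for the ``doubled'' relations: a relation $x_i\preceq y_i$ now present in both $\mathcal{S}_i$ and $\mathcal{Q}_{i-1}$, with $F_{\mathcal{Q}_i}$ carrying its $E^*$ only once. I would show that the corresponding bracket relation $F_{\mathcal{Q}_i}([E_{x_i,x_i},B])=0$ (and its counterparts) still forces the shared coefficients to vanish by invoking Lemma~\ref{lem:bl1} first to kill all summands of $F_{\mathcal{Q}_i}$ on $B$, then reading the residual decoupled equations. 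The truncation-to-a-subgraph procedure $Proc(i)$ from Lemma~\ref{lem:bl1}'s proof is exactly the tool that handles the interior vertices of $\mathcal{S}_i$; I would cite it rather than repeat it. The main obstacle is precisely this $\textup{F}$ case: one must verify carefully that removing \emph{two} summands from $F_{\mathcal{Q}_i}$ does not leave a diagonal relation underdetermined in a way that allows a nonzero $B|_{\mathcal{S}_i}\notin\ker(F_{\mathcal{S}_i})$ — that is, that the ``missing'' equations $F_{\mathcal{S}_i}([E_{x_i,x_i},B|_{\mathcal{S}_i}])=0$ are recovered from the $\mathcal{Q}_{i-1}$ side via the identified relations together with Lemma~\ref{lem:bl1}. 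Once that is checked, the lemma follows uniformly across all six admissible rules.
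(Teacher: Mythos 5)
Your overall strategy --- splitting the equations defining $\ker(F_{\mathcal{Q}_i})$ into diagonal and off-diagonal ones and decoupling them via Remark~\ref{rem:finf}, the fact that $\mathcal{Q}_{i-1}\cap\mathcal{S}_i\subset Ext(\mathcal{Q}_i)$, and Lemma~\ref{lem:bl1} --- is the paper's strategy, but as written there is a genuine gap: the $\textup{D}_1$, $\textup{D}_2$, and $\textup{F}$ cases, which you yourself single out as the main obstacle, are never actually carried out; you only describe what ``must be verified.'' Moreover, the worry prompting that deferral rests on a misreading of Definition~\ref{def:toralfun}: the terms $E^*_{x_i,y_i}$ and $E^*_{x_i,z_i}$ are subtracted only to avoid double counting, since they occur in \emph{both} $F_{\mathcal{Q}_{i-1}}$ and $F_{\mathcal{S}_i}$; by Remark~\ref{rem:finf} they remain summands of $F_{\mathcal{Q}_i}$. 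Hence no diagonal relation ever becomes ``underdetermined,'' nothing needs to be ``recovered from the $\mathcal{Q}_{i-1}$ side,'' and in fact no case split on the construction rule is needed at all: every term of $F_{\mathcal{Q}_{i-1}}([E_{p,p},B|_{\mathcal{Q}_{i-1}}])$ (resp.\ $F_{\mathcal{S}_i}([E_{p,p},B|_{\mathcal{S}_i}])$) is of the form $\pm E^*_{u,v}(B)$ with $E^*_{u,v}$ a summand of $F_{\mathcal{Q}_{i-1}}$ (resp.\ $F_{\mathcal{S}_i}$), hence of $F_{\mathcal{Q}_i}$, and so vanishes by Lemma~\ref{lem:bl1}; while for $p\prec q$ with $p,q\in\mathcal{Q}_{i-1}$ (resp.\ $p,q\in\mathcal{S}_i$) the equation $F_{\mathcal{Q}_{i-1}}([E_{p,q},B|_{\mathcal{Q}_{i-1}}])=0$ literally coincides with $F_{\mathcal{Q}_i}([E_{p,q},B])=0$, because the identified vertices are minimal or maximal in $\mathcal{Q}_i$ and so no cross-terms can occur. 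That rule-uniform two-step argument is the published proof.

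A smaller but real defect: in your $\textup{A}_1$ analysis you invoke Lemma~\ref{lem:bl1} to conclude ``that coefficient is already $0$,'' but the lemma controls the entries $E^*_{r,s}(B)$ of $B$ at summand positions, not the coefficient with which a summand basis element $E_{r,s}$ appears in $[E_{p,q},B]$ --- the latter is an entry such as $E^*_{q,s}(B)$ or $E^*_{r,p}(B)$, which is in general not at a summand position. For the strictly off-diagonal brackets the correct reason those contributions vanish is the structural fact you already stated (no relations between an element interior to $\mathcal{S}_i$ and one interior to $\mathcal{Q}_{i-1}$, together with extremality of the glued vertices); Lemma~\ref{lem:bl1} is the tool for the diagonal brackets. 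With these repairs your argument closes and becomes essentially the paper's proof.
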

\begin{proof}
We assume throughout that $\mathcal{S}_i$, for $i=2,\hdots,n$, has a single minimal element, the other case following via a symmetric argument. Take $B\in\mathfrak{g}(\mathcal{Q}_{i})\cap\ker(F_{\mathcal{Q}_{i}})$, for $1<i\le n$. Considering Remark~\ref{rem:finf}, the equation $F_{\mathcal{Q}_{i-1}}([E_{p,p},B|_{\mathcal{Q}_{i-1}}])=0$ (resp. $F_{\mathcal{S}_{i}}([E_{p,p},B|_{\mathcal{S}_{i}}])=0$) consists of terms of the form $E^*_{p,q}(B)$ and $E^*_{r,p}(B)$, for summands $E^*_{p,q}$ and $E^*_{r,p}$ of $F_{\mathcal{Q}_i}$ such that $p,q,r\in\mathcal{Q}_{i-1}$ (resp. $p,q,r\in\mathcal{S}_i$). Thus, by Lemma~\ref{lem:bl1}, $B\in\mathfrak{g}(\mathcal{Q}_i)\cap\ker(F_{\mathcal{Q}_i})$ must satisfy 
$$F_{\mathcal{Q}_{i-1}}([E_{p,p},B|_{\mathcal{Q}_{i-1}}])=0\text{ for }p\in\mathcal{Q}_{i-1}$$ and $$F_{\mathcal{S}_{i}}([E_{p,p},B|_{\mathcal{S}_{i}}])=0\text{ for }p\in\mathcal{S}_{i}.$$
It remains to consider restrictions placed on $B$ by equations of the form $F_{\mathcal{Q}_{i}}([E_{p,q},B])=0$ for $p,q\in\mathcal{Q}_{i-1}$ (resp. $p,q\in\mathcal{S}_{i}$); but combining Remark~\ref{rem:finf} with the fact that $\mathcal{Q}_{i-1}\cap\mathcal{S}_i\subset Ext(\mathcal{Q}_i)$, it is immediate that 
$$F_{\mathcal{Q}_{i-1}}([E_{p,q},B|_{\mathcal{Q}_{i-1}}])=F_{\mathcal{Q}_{i}}([E_{p,q},B])=0\text{ for }p,q\in\mathcal{Q}_{i-1}$$ and $$F_{\mathcal{S}_{i}}([E_{p,q},B|_{\mathcal{S}_{i}}])=F_{\mathcal{Q}_{i}}([E_{p,q},B])=0\text{ for }p,q\in\mathcal{S}_{i}.$$ Thus, the result follows.
\end{proof}

\begin{theorem}\label{thm:toralFun}
If $\mathcal{P}$ is a Frobenius, toral poset constructed from the toral-pairs $\{(\mathcal{S}_i,F_{\mathcal{S}_i})\}_{i=1}^n$ with construction sequence  $\mathcal{S}_1=\mathcal{Q}_1\subset\mathcal{Q}_2\subset\hdots\subset\mathcal{Q}_n=\mathcal{P}$, then $F_{\mathcal{Q}_i}\in(\mathfrak{g}_A(\mathcal{Q}_i))^*$ is Frobenius, for $i=1,\hdots,n$.
\end{theorem}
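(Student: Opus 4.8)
The plan is to induct on $i$, using the following reformulation of Frobenius-ness as the engine. For any finite poset $\mathcal{R}$, the identity $I_{\mathcal{R}}=\sum_{p\in\mathcal{R}}E_{p,p}$ lies in $\mathfrak{g}(\mathcal{R})$, is central there, and $\mathfrak{g}(\mathcal{R})=\mathfrak{g}_A(\mathcal{R})\oplus\textbf{k}\,I_{\mathcal{R}}$; since $[x,I_{\mathcal{R}}]=0$ for all $x$, appending $I_{\mathcal{R}}$ to a test set changes nothing, so $F\in(\mathfrak{g}_A(\mathcal{R}))^*$ is Frobenius if and only if $\mathfrak{g}(\mathcal{R})\cap\ker(F)=\textbf{k}\,I_{\mathcal{R}}$. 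Accordingly, I will prove by induction on $i$ the equivalent statement $\mathfrak{g}(\mathcal{Q}_i)\cap\ker(F_{\mathcal{Q}_i})=\textbf{k}\,I_{\mathcal{Q}_i}$. The base case $i=1$ is immediate: $F_{\mathcal{Q}_1}=F_{\mathcal{S}_1}$ is the Frobenius functional of the toral-pair $(\mathcal{S}_1,F_{\mathcal{S}_1})$, and property \textbf{(F4)} states precisely that every $B\in\mathfrak{g}(\mathcal{S}_1)\cap\ker(F_{\mathcal{S}_1})$ has all off-diagonal coordinates zero and all diagonal coordinates equal, i.e.\ $B=\lambda I_{\mathcal{S}_1}$.

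For the inductive step, fix $1<i\le n$ and assume $\mathfrak{g}(\mathcal{Q}_{i-1})\cap\ker(F_{\mathcal{Q}_{i-1}})=\textbf{k}\,I_{\mathcal{Q}_{i-1}}$. Let $B\in\mathfrak{g}(\mathcal{Q}_i)\cap\ker(F_{\mathcal{Q}_i})$. Lemma~\ref{lem:bl2} yields $B|_{\mathcal{Q}_{i-1}}\in\mathfrak{g}(\mathcal{Q}_{i-1})\cap\ker(F_{\mathcal{Q}_{i-1}})$ and $B|_{\mathcal{S}_i}\in\mathfrak{g}(\mathcal{S}_i)\cap\ker(F_{\mathcal{S}_i})$; by the inductive hypothesis the first restriction is $\mu I_{\mathcal{Q}_{i-1}}$ for some scalar $\mu$, and by property \textbf{(F4)} of the toral-pair $(\mathcal{S}_i,F_{\mathcal{S}_i})$ the second is $\lambda I_{\mathcal{S}_i}$ for some scalar $\lambda$. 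Hence every coordinate $E^*_{p,q}(B)$ with $p,q$ both in $\mathcal{Q}_{i-1}$ or both in $\mathcal{S}_i$ is known: it is $0$ when $p\neq q$, is $\mu$ when $p=q\in\mathcal{Q}_{i-1}$, and is $\lambda$ when $p=q\in\mathcal{S}_i$.

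It remains to merge the two restrictions. Every construction rule identifies at least one extremal element of $\mathcal{S}_i$ with an extremal element of $\mathcal{Q}_{i-1}$, so $\mathcal{Q}_{i-1}\cap\mathcal{S}_i\neq\emptyset$; evaluating $E^*_{p,p}(B)$ at any $p$ in this overlap gives $\mu=\lambda$. Moreover, because the identified elements are extremal in both $\mathcal{S}_i$ and $\mathcal{Q}_{i-1}$, transitivity cannot create a strict relation crossing from one building block to the other, so $Rel(\mathcal{Q}_i)=Rel(\mathcal{Q}_{i-1})\cup Rel(\mathcal{S}_i)$, and consequently every basis vector $E_{p,q}$ of $\mathfrak{g}(\mathcal{Q}_i)$ with $p\prec q$ belongs to $\mathfrak{g}(\mathcal{Q}_{i-1})$ or to $\mathfrak{g}(\mathcal{S}_i)$. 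Thus $B=\lambda I_{\mathcal{Q}_i}$, establishing $\mathfrak{g}(\mathcal{Q}_i)\cap\ker(F_{\mathcal{Q}_i})=\textbf{k}\,I_{\mathcal{Q}_i}$; intersecting with the trace-zero subalgebra $\mathfrak{g}_A(\mathcal{Q}_i)$ forces $\lambda\,|\mathcal{Q}_i|=0$, so $B=0$ and $F_{\mathcal{Q}_i}$ is Frobenius. This closes the induction.

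The substantive content has already been extracted into Lemma~\ref{lem:bl2} (itself resting on Lemma~\ref{lem:bl1}), so what remains is assembly; I expect the only points requiring care to be bookkeeping ones, namely verifying that gluing along extremal elements creates no new strict relations — so that $B$ is genuinely reconstructed coordinatewise from $B|_{\mathcal{Q}_{i-1}}$ and $B|_{\mathcal{S}_i}$ — and keeping straight that the kernels in Lemma~\ref{lem:bl2} live in the full incidence algebras $\mathfrak{g}(\cdot)$, the passage to the trace-zero algebras $\mathfrak{g}_A(\cdot)$ being mediated solely by the harmless central line $\textbf{k}\,I$.
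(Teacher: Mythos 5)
Your proposal is correct and follows essentially the same route as the paper's proof: induction on the construction sequence, with Lemma~\ref{lem:bl2} splitting $B$ into $B|_{\mathcal{Q}_{i-1}}$ and $B|_{\mathcal{S}_i}$, the inductive hypothesis and property \textbf{(F4)} pinning down each restriction, the overlap of the two pieces merging the diagonal constants, and the trace-zero condition finishing the argument. Your packaging of the inductive statement as $\mathfrak{g}(\mathcal{Q}_i)\cap\ker(F_{\mathcal{Q}_i})=\textbf{k}\,I_{\mathcal{Q}_i}$, together with the explicit checks that $\mathcal{Q}_{i-1}\cap\mathcal{S}_i\neq\emptyset$ and that $Rel(\mathcal{Q}_i)=Rel(\mathcal{Q}_{i-1})\cup Rel(\mathcal{S}_i)$, is just a slightly more careful rendering of the paper's appeals to connectedness of $\mathcal{Q}_i$ and to $B\in\mathfrak{sl}(|\mathcal{Q}_i|)$.
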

\begin{proof}
We will show by induction on $i$, that $B\in\mathfrak{g}(\mathcal{Q}_i)\cap\ker(F_{\mathcal{Q}_i})$ satisfies
\begin{itemize}
    \item $E^*_{p,p}(B)=E^*_{q,q}(B)$ for all $p,q\in\mathcal{Q}_i$, and
    \item $E^*_{p,q}(B)=0$ for all $p,q\in\mathcal{Q}_i$ satisfying $p\preceq q$,
\end{itemize}
for $i=1,\hdots,n$. Since $B\in\mathfrak{sl}(|\mathcal{Q}_i|)$, the result will follow. 

For $i=1$, the result is clear since $(\mathcal{Q}_1,F_{\mathcal{Q}_1})=(\mathcal{S}_1,F_{\mathcal{S}_1})$ is a toral-pair. Assume the result holds for $B\in\mathfrak{g}(\mathcal{Q}_{i-1})\cap\ker(F_{\mathcal{Q}_{i-1}})$, for $1<i\le n$. Take $B\in\mathfrak{g}(\mathcal{Q}_i)\cap\ker(F_{\mathcal{Q}_i})$. By Lemma~\ref{lem:bl2}, we know that 
\begin{enumerate}[label=\textup(\alph*\textup)]
        \item $B|_{\mathcal{Q}_{i-1}}\in\mathfrak{g}_A(\mathcal{Q}_{i-1})\cap\ker(F_{\mathcal{Q}_{i-1}})$, and
        \item $B|_{\mathcal{S}_{i}}\in\mathfrak{g}_A(\mathcal{S}_{i})\cap\ker(F_{\mathcal{S}_{i}})$.
\end{enumerate}
Combining (a) with the inductive hypothesis, we find that $E^*_{p,p}(B)=E^*_{q,q}(B)$, for all $p,q\in\mathcal{Q}_{i-1}$, and $E^*_{p,q}(B)=0$, for all $p,q\in\mathcal{Q}_{i-1}$ satisfying $p\preceq_{\mathcal{Q}_{i-1}} q$. Furthermore, combining (b) with property \textbf{(F4)} of toral-pairs, we must also have that $E^*_{p,p}(B)=E^*_{q,q}(B)$, for all $p,q\in\mathcal{S}_{i}$, and $E^*_{p,q}(B)=0$, for all $p,q\in\mathcal{S}_i$ satisfying $p\preceq_{\mathcal{S}_i} q$. Thus, $E^*_{p,q}(B)=0$, for all $p,q\in\mathcal{Q}_i$ satisfying $p\preceq_{\mathcal{Q}_i} q$, and, since $\mathcal{Q}_i$ is connected, $E^*_{p,p}(B)=E^*_{q,q}(B)$, for all $p,q\in\mathcal{Q}_{i}$. Hence, since $B\in\mathfrak{sl}(|\mathcal{Q}_i|)$, we find that $B=0$ and the result follows.
\end{proof}

\begin{theorem}\label{thm:toralspec}
If $\mathcal{P}$ is a Frobenius, toral poset, then the spectrum of $\mathfrak{g}_A(\mathcal{P})$ is binary.
\end{theorem}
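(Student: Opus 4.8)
The plan is to combine Theorem~\ref{thm:toralFun} with Theorem~\ref{thm:01}. By Theorem~\ref{thm:toralFun}, the toral functional $F_{\mathcal{P}} = F_{\mathcal{Q}_n}$ is a Frobenius functional for $\mathfrak{g}_A(\mathcal{P})$. So it suffices to verify that $F_{\mathcal{P}}$ satisfies the four hypotheses of Theorem~\ref{thm:spec01} (equivalently Theorem~\ref{thm:01}): that $F_{\mathcal{P}}$ is small, that $U_{F_{\mathcal{P}}}(\mathcal{P})$ is a filter, that $D_{F_{\mathcal{P}}}(\mathcal{P})$ is an ideal, and that $B_{F_{\mathcal{P}}}(\mathcal{P}) = \emptyset$. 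But all four of these are precisely the content of Lemma~\ref{lem:toralUD}, applied with $i = n$. Once these hold, Theorem~\ref{thm:01} gives that the spectrum of $\mathfrak{g}_A(\mathcal{P})$ consists only of $0$'s and $1$'s.

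\textbf{First} I would invoke Theorem~\ref{thm:toralFun} to conclude $F_{\mathcal{P}}$ is Frobenius, then invoke Lemma~\ref{lem:toralUD} to conclude the structural hypotheses on $U_{F_{\mathcal{P}}}(\mathcal{P})$, $D_{F_{\mathcal{P}}}(\mathcal{P})$, $B_{F_{\mathcal{P}}}(\mathcal{P})$, and smallness. Applying Theorem~\ref{thm:01} then shows the spectrum lies in $\{0,1\}$. The remaining — and only substantive — point is that the multiplicities of $0$ and $1$ are equal, since Theorem~\ref{thm:01} (as its own Remark notes) says nothing about multiplicities. Here I would use the canonical basis $\mathscr{B}_{\mathcal{P},F_{\mathcal{P}}}$ from Remark~\ref{rem:binbasis}, exactly as in the proofs of Theorems~\ref{thm:h1bb} and~\ref{thm:h2bb}: the $0$-eigenspace is spanned by the diagonal elements $E_{p,p}-E_{q,q}$ (one for each summand of $F_{\mathcal{P}}$) together with the $E_{p,q}$ with $p,q$ both in $U_{F_{\mathcal{P}}}(\mathcal{P})$ or both in $D_{F_{\mathcal{P}}}(\mathcal{P})$, while the $1$-eigenspace is spanned by the $E_{p,q}$ with $p\in D_{F_{\mathcal{P}}}(\mathcal{P})$ and $q\in U_{F_{\mathcal{P}}}(\mathcal{P})$.

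\textbf{The main obstacle} is thus the combinatorial bookkeeping that matches these two counts. I would set this up inductively along the construction sequence $\mathcal{S}_1 = \mathcal{Q}_1 \subset \cdots \subset \mathcal{Q}_n = \mathcal{P}$: show that the multiplicity difference (number of $0$'s minus number of $1$'s) is additive over the combining rules in $\{\textup{A}_1,\textup{A}_2,\textup{C},\textup{D}_1,\textup{D}_2,\textup{F}\}$ — the only rules producing Frobenius posets — and that each toral-pair $(\mathcal{S}_i, F_{\mathcal{S}_i})$ contributes zero to this difference by property \textbf{(P2)}. When forming $\mathcal{Q}_i$ from $\mathcal{Q}_{i-1}$ and $\mathcal{S}_i$, one identifies extreme elements (which contributes $E_{p,q}$-type basis vectors that are shared or cancelled) and, under rules $\textup{D}_1, \textup{D}_2, \textup{F}$, deletes certain summands $E^*_{x_i,y_i}$, etc., from the functional (removing the corresponding diagonal $0$-eigenvectors); one must check that the newly created relations $E_{p,q}$ spanning an identified source-to-sink pair, the relations lost, and the diagonal generators lost, all balance. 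The cleanest route is to track the identity
\[
\dim\mathfrak{g}_A(\mathcal{Q}_i) = \bigl(\#\text{ of }0\text{-eigenvectors}\bigr) + \bigl(\#\text{ of }1\text{-eigenvectors}\bigr),
\]
together with the fact that the number of $1$-eigenvectors equals $|D_{F_{\mathcal{Q}_i}}(\mathcal{Q}_i)|\cdot|U_{F_{\mathcal{Q}_i}}(\mathcal{Q}_i)|$ restricted to comparable pairs (by Case 3 of Theorem~\ref{thm:01}'s proof), and verify rule-by-rule — there are only six rules — that the balance is preserved. Since each step involves only the finitely many elements of $Ext(\mathcal{S}_i)$ and the edges $Rel_E(\mathcal{S}_i)$, each case is a short finite check; assembling them gives the result.
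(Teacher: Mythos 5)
Your proposal is correct and follows essentially the same route as the paper: the paper likewise combines Lemma~\ref{lem:toralUD} with Theorems~\ref{thm:spec01} and~\ref{thm:01} to place the spectrum in $\{0,1\}$, and then establishes equal multiplicities by induction on the construction sequence, using property \textbf{(P2)} of each toral-pair and a rule-by-rule balance check on $\mathscr{B}_{\mathcal{Q}_i,F_{\mathcal{Q}_i}}\backslash\mathscr{B}_{\mathcal{Q}_{i-1},F_{\mathcal{Q}_{i-1}}}$ (rules $\textup{A}_1,\textup{A}_2,\textup{C}$; then $\textup{D}_1,\textup{D}_2$; then $\textup{F}$). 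The finite checks you defer are exactly the paper's three cases, where the basis elements removed under $\textup{D}_1,\textup{D}_2,\textup{F}$ pair one $0$-eigenvector with one $1$-eigenvector each, so the balance is preserved.
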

\begin{proof}
Assume $\mathcal{P}$ is constructed from the toral-pairs $\{(\mathcal{S}_i,F_{\mathcal{S}_i})\}_{i=1}^n$ with construction sequence $S_1=\mathcal{Q}_1\subset\mathcal{Q}_2\subset\hdots\subset\mathcal{Q}_n=\mathcal{P}$. We will show that $\mathfrak{g}_A(\mathcal{Q}_i)$, for $1\le i\le n$, has binary spectrum by induction on $i$. Throughout, we assume $\mathcal{S}_i$ has a single minimal element, the other case following via a symmetric argument.  

For the base case, $\mathfrak{g}_A(\mathcal{Q}_1)=\mathfrak{g}_A(\mathcal{S}_1)$ has a binary spectrum by property \textbf{(P2)} of toral-pairs. 
So, assume that $\mathfrak{g}_A(\mathcal{Q}_{i-1})$, for $1<i\le n$, has a binary spectrum. By Lemma~\ref{lem:toralUD}, the form of $\widehat{F}_{\mathcal{Q}_i}$ is given by Theorem~\ref{thm:spec01}. Thus, considering the proof of Theorem~\ref{thm:01}, to determine the spectrum of $\mathfrak{g}_A(\mathcal{P})$ it suffices to calculate the values $[\widehat{F},x]$, for $x\in \mathscr{B}_{\mathcal{Q}_{i},F_{\mathcal{Q}_{i}}}$. Recall, that $E_{p,p}-E_{q,q}\in \mathscr{B}_{\mathcal{Q}_{i},F_{\mathcal{Q}_{i}}}$ is an eigenvector of $[\widehat{F},-]$ with eigenvalue 0, $E_{p,q}\in \mathscr{B}_{\mathcal{Q}_{i},F_{\mathcal{Q}_{i}}}$ with $p,q\in D_{F_{\mathcal{Q}_i}}(\mathcal{Q}_i)$ or $p,q\in U_{F_{\mathcal{Q}_i}}(\mathcal{Q}_i)$ has eigenvalue 0, and $E_{p,q}\in \mathscr{B}_{\mathcal{Q}_{i},F_{\mathcal{Q}_{i}}}$ with $p\in D_{F_{\mathcal{Q}_i}}(\mathcal{Q}_i)$ and $q\in U_{F_{\mathcal{Q}_i}}(\mathcal{Q}_i)$ has eigenvalue 1. Considering Definition~\ref{def:toralfun}, we must have that $D_{F_{\mathcal{Q}_{i-1}}}(\mathcal{Q}_{i-1})\subset D_{F_{\mathcal{Q}_i}}(\mathcal{Q}_i)$ and $U_{F_{\mathcal{Q}_{i-1}}}(\mathcal{Q}_{i-1})\subset U_{F_{\mathcal{Q}_i}}(\mathcal{Q}_i)$ so that, by the inductive hypothesis, the elements of $\mathscr{B}_{\mathcal{Q}_{i-1},F_{\mathcal{Q}_{i-1}}}\subset\mathscr{B}_{\mathcal{Q}_i,F_{\mathcal{Q}_i}}$ contribute an equal number of 0's and 1's to the spectrum of $\mathfrak{g}_A(\mathcal{Q}_i)$. It remains to consider the eigenvalues corresponding to the elements of $\mathscr{B}_{\mathcal{Q}_i,F_{\mathcal{Q}_i}}\backslash \mathscr{B}_{\mathcal{Q}_{i-1},F_{\mathcal{Q}_{i-1}}}$. The collection of such basis elements breaks into four cases:
\\*

\noindent
\textbf{Case 1:} $\mathcal{Q}_{i}$ is formed from $\mathcal{Q}_{i-1}$ and $\mathcal{S}_i$ by applying rule $\textup{A}_1$, $\textup{A}_2$, or $\textup{C}$. In this case, 
$$\mathscr{B}_{\mathcal{Q}_i,F_{\mathcal{Q}_i}}\backslash \mathscr{B}_{\mathcal{Q}_{i-1},F_{\mathcal{Q}_{i-1}}}=\mathscr{B}_{\mathcal{S}_i,F_{\mathcal{S}_i}}.$$
By Definition~\ref{def:toralfun}, we have  $D_{F_{S_i}}(\mathcal{S}_{i})\subset D_{F_{\mathcal{Q}_i}}(\mathcal{Q}_i)$ and $U_{F_{S_i}}(\mathcal{S}_{i})\subset U_{F_{\mathcal{Q}_i}}(\mathcal{Q}_i)$. Thus, by property \textbf{(P2)} of toral-pairs, we must have that the elements of $\mathscr{B}_{\mathcal{S}_i,F_{\mathcal{S}_i}}$ contribute an equal number of 0's and 1's to the spectrum of $\mathfrak{g}_A(\mathcal{Q}_i)$. Thus, $\mathfrak{g}_A(\mathcal{Q}_i)$ has a binary spectrum.
\\*

\noindent
\textbf{Case 2:} $\mathcal{Q}_{i}$ is formed from $\mathcal{Q}_{i-1}$ and $\mathcal{S}_i$ by applying rule $\textup{D}_1$ or $\textup{D}_2$. Assume, without loss of generality, that $\mathcal{Q}_i$ is formed from $\mathcal{Q}_{i-1}$ and $\mathcal{S}_i$ by applying rule $\textup{D}_1$. In this case, 
$$\mathscr{B}_{\mathcal{Q}_i,F_{\mathcal{Q}_i}}\backslash \mathscr{B}_{\mathcal{Q}_{i-1},F_{\mathcal{Q}_{i-1}}}=\mathscr{B}_{\mathcal{S}_i,F_{\mathcal{S}_i}}\backslash\{E_{x,x}-E_{y,y},E_{x,y}\}.$$ As in Case 1, the basis elements of $\mathscr{B}_{\mathcal{S}_i,F_{\mathcal{S}_i}}$ contribute an equal number of 0's and 1's to the spectrum of $\mathfrak{g}_A(\mathcal{Q}_i)$. Since $E_{x,x}-E_{y,y}$ is an eigenvector of $ad(\widehat{F}_{\mathcal{Q}_i})$ with eigenvalue 0, while $E_{x,y}$ is an eigenvector with eigenvalue 1, it follows that $\mathfrak{g}_A(\mathcal{Q}_i)$ has a binary spectrum.
\\*

\noindent
\textbf{Case 3:} $\mathcal{Q}_{i}$ is formed from $\mathcal{Q}_{i-1}$ and $\mathcal{S}_i$ by applying rule $\textup{F}$. In this case, 
$$\mathscr{B}_{\mathcal{Q}_i,F_{\mathcal{Q}_i}}\backslash \mathscr{B}_{\mathcal{Q}_{i-1},F_{\mathcal{Q}_{i-1}}}=\mathscr{B}_{\mathcal{S}_i,F_{\mathcal{S}_i}}\backslash\{E_{x,x}-E_{y,y},E_{x,x}-E_{z,z},E_{x,y},E_{x,z}\}.$$
As in Case 1, the basis elements of $\mathscr{B}_{\mathcal{S}_i,F_{\mathcal{S}_i}}$ contribute an equal number of 0's and 1's to the spectrum of $\mathfrak{g}_A(\mathcal{Q}_i)$. Since $E_{x,x}-E_{y,y}$ and $E_{x,x}-E_{z,z}$ are eigenvectors of $ad(\widehat{F}_{\mathcal{Q}_i})$ with eigenvalue 0, while $E_{x,y}$ and $E_{x,z}$ are eigenvectors with eigenvalue 1, it follows that $\mathfrak{g}_A(\mathcal{Q}_i)$ has a binary spectrum.
\end{proof}

As a corollary to Theorem~\ref{thm:toralspec}, in conjunction with Theorems 10 and 12 of \textbf{\cite{CM}} yields the following succinct result.

\begin{theorem} \label{thm:end}
If $\mathcal{P}$ is a Frobenius poset of height at most two, then $\mathfrak{g}_A(\mathcal{P})$ is toral -- and so has a binary spectrum.
\end{theorem}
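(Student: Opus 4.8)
### Proof proposal for Theorem~\ref{thm:end}

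The plan is to show that every Frobenius poset $\mathcal{P}$ of height at most two is toral; the binary-spectrum conclusion is then immediate from Theorem~\ref{thm:toralspec}. First I would recall the structural input from \textbf{\cite{CM}}: Theorems 10 and 12 there describe (and classify) the Frobenius, type-A Lie poset algebras arising from posets of height at most two, equivalently those posets $\mathcal{P}$ with $\ind\mathfrak{g}_A(\mathcal{P})=0$ and longest chain of cardinality $\le 3$. The strategy is to take such a classification and exhibit, for each poset on the list, an explicit construction sequence using only the toral-pair building blocks $\mathcal{P}_1,\mathcal{P}_2,\mathcal{P}_2^*,\mathcal{P}_3,\mathcal{P}_3^*,\mathcal{P}_{4,n},\mathcal{P}_{4,n}^*,\mathcal{P}_{5,n},\mathcal{P}_{5,n}^*$ of Theorems~\ref{thm:h1bb}--\ref{thm:h3bb} and the Frobenius-preserving rules $\{\textup{A}_1,\textup{A}_2,\textup{C},\textup{D}_1,\textup{D}_2,\textup{F}\}$.

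The key steps, in order, would be: (1) invoke the \textbf{\cite{CM}} classification to reduce to a finite list of ``shapes'' (in the height-two case one typically reduces to a bipartite-type picture between $Ext(\mathcal{P})$ and the middle rank, subject to the index-zero constraint $|Rel_E(\mathcal{P})| - |Ext(\mathcal{P})| + 1 = 0$, i.e. $|Rel_E(\mathcal{P})| = |Ext(\mathcal{P})| - 1$, so that the comparability graph restricted to $Ext(\mathcal{P})$ is a tree); (2) set up an induction on $|\mathcal{P}|$ (or on $|Rel(\mathcal{P})|$): given a Frobenius $\mathcal{P}$ of height $\le 2$, locate an ``extremal'' building block $\mathcal{S}$ — typically a vertex of degree one in the restricted comparability graph together with its pendant edge, which matches $\mathcal{P}_1$, or a small configuration matching $\mathcal{P}_2$ or $\mathcal{P}_2^*$ — that can be peeled off via one of the allowed rules to leave a strictly smaller poset $\mathcal{Q}$; (3) check that $\mathcal{Q}$ is again Frobenius of height $\le 2$ (this is where the index bookkeeping from the theorem preceding Theorem~\ref{thm:toralFun} is used: the allowed rules all preserve index, so $\ind\mathfrak{g}_A(\mathcal{Q}) = \ind\mathfrak{g}_A(\mathcal{P}) = 0$), and that height is not increased; (4) apply the inductive hypothesis to $\mathcal{Q}$ and prepend the peeled block to its construction sequence; (5) conclude $\mathcal{P}$ is toral, and finally apply Theorem~\ref{thm:toralspec}.

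The main obstacle I expect is step~(2): showing that \emph{every} Frobenius height-$\le 2$ poset admits such an extremal decomposition, i.e. that the building-block list is rich enough. Concretely, one must rule out ``irreducible'' Frobenius configurations that are not themselves one of $\mathcal{P}_1,\dots,\mathcal{P}_{5,n}^*$ and cannot be split by any allowed rule. This is really a combinatorial case analysis on how the middle-rank elements attach to the tree on $Ext(\mathcal{P})$: an element covering exactly two extremal elements $a,b$ with $a \sim b$ already realized elsewhere gives a $\mathcal{P}_2$ or $\mathcal{P}_2^*$ block to remove via $\textup{D}_1$; a pendant extremal vertex gives a $\mathcal{P}_1$ block via $\textup{C}$ or $\textup{A}_1$; longer ``zig-zag'' chains of middle elements force the $\mathcal{P}_{4,n}/\mathcal{P}_{5,n}$ families. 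The delicate point is that the index-zero hypothesis must be shown to \emph{force} one of these local patterns to occur — otherwise the induction stalls. I would handle this by arguing that if no $\mathcal{P}_1$-type pendant exists, then every extremal vertex has degree $\ge 2$ in the restricted comparability graph, which combined with that graph being a tree (the index-zero condition) yields a leaf-count contradiction unless $\mathcal{P}$ is globally one of the listed families. Once that combinatorial lemma is in place, steps (1),(3),(4),(5) are routine, and the theorem follows.
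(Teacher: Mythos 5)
Your high-level route is the same as the paper's: Theorem~\ref{thm:end} is obtained there in one line, as a corollary of Theorem~\ref{thm:toralspec} together with Theorems 10 and 12 of \cite{CM}, which characterize Frobenius posets of height at most two (the Hasse diagram of the restriction of $\mathcal{P}$ to $Ext(\mathcal{P})$ is a tree, and every non-extremal element is comparable to exactly three extremal elements of $\mathcal{P}$). The genuine gap in your proposal is precisely the step you flag yourself and then leave open: the claim that every such poset admits a construction sequence from toral-pair posets using only the rules $\{\textup{A}_1,\textup{A}_2,\textup{C},\textup{D}_1,\textup{D}_2,\textup{F}\}$ is never proved, and the sketch you offer for it is off-track. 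In a poset of height at most two, two non-extremal elements can never be comparable (that would produce a chain of cardinality four), so there are no ``zig-zag chains of middle elements,'' and the families $\mathcal{P}_3$, $\mathcal{P}_3^*$, $\mathcal{P}_{4,n}$, $\mathcal{P}_{5,n}$ beyond their smallest instances are irrelevant here; likewise the proposed ``leaf-count contradiction'' does not work as stated, since a tree always has leaves---the issue is whether a leaf configuration is peelable, not whether it exists. Since this decomposition lemma is the entire content of the theorem beyond the two citations, leaving it as an unproven combinatorial lemma leaves the proof incomplete.

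The observation that closes the gap, and which your write-up never makes, is a transitivity remark. By the \cite{CM} characterization, a non-extremal $m$ satisfies $x\preceq m\preceq y,z$ with $x$ minimal and $y,z$ maximal (or dually), and transitivity then forces $x\preceq y$ and $x\preceq z$ in $\mathcal{P}$; these are relations between extremal elements, hence edges of the tree on $Ext(\mathcal{P})$. So one first builds that tree by gluing $|Rel_E(\mathcal{P})|$ copies of $\mathcal{P}_1$ one edge at a time via rules $\textup{C}$ and $\textup{A}_1$ (ordering the edges so each new edge meets the part already built in exactly one vertex), and then attaches each non-extremal element separately as a copy of $\mathcal{P}_2$ (or $\mathcal{P}_2^*$) via rule $\textup{F}$, the hypotheses $x\sim_{\mathcal{Q}}y$ and $x\sim_{\mathcal{Q}}z$ of rule $\textup{F}$ being exactly the transitivity relations above; because non-extremal elements are pairwise incomparable, these attachments are independent, introduce no unintended relations under transitive closure, and reproduce all of $Rel(\mathcal{P})$. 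The blocks used are toral-pairs by Theorems~\ref{thm:h1bb} and~\ref{thm:h2bb}, so $\mathcal{P}$ is toral, and Theorem~\ref{thm:toralspec} gives the binary spectrum. Your peeling induction can be repaired along the same lines (peel any non-extremal element via $\textup{F}$, then peel leaf edges of the extremal tree via $\textup{A}_1$ or $\textup{C}$), but as written the key step is missing.
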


\begin{remark}
Extensive calculations suggest that Theorem~\ref{thm:end} is true for posets of arbitrary height.
\end{remark}

\section{Appendix A}

In this appendix, we show that given a finite poset $\mathcal{P}$ and $F\in(\mathfrak{g}_A(\mathcal{P}))^*$, that the system of equations $F([E_{p_1,p_1}-E_{p_i,p_i},B])=0$ for $p_1\in\mathcal{P}$ and all $p_1\neq p_i\in\mathcal{P}$ is equivalent to the system $F([E_{p_i,p_i},B])=0$ for all $p_i\in\mathcal{P}$. 

The conditions $$F([E_{p_1,p_1}-E_{p_i,p_i},B])=F([E_{p_1,p_1},B])-F([E_{p_i,p_i},B])=0,$$ for all $p_i\in\mathcal{P}$ such that $p_1\neq p_i$, imply that $$F([E_{p_i,p_i},B])=C$$ for all $p_i\in\mathcal{P}$ and $C\in\textbf{k}$. Thus, $$\sum_{p_i\in\mathcal{P}}F([E_{p_i,p_i},B])=nC;$$ but $$\sum_{p_i\in\mathcal{P}}F([E_{p_i,p_i},B])=0.$$ To see this, note that if $E_{p_j,p_k}^*(B)$ is a term of $\sum_{p_i\in\mathcal{P}}F([E_{p_i,p_i},B])$ (corresponding to $F([E_{p_j,p_j},B])$), then $-E_{p_j,p_k}^*(B)$ must also be a term (coming from $F([E_{p_k,p_k},B])$). Thus, $C=0$ and we have shown that the conditions $$F([E_{p_1,p_1}-E_{p_i,p_i},B])=0$$ for $p_1\in\mathcal{P}$ and all $p_1\neq p_i\in\mathcal{P}$ imply that $$F([E_{p_i,p_i},B])=0$$ for all $p_i\in\mathcal{P}$. The other direction is clear.

\section{Appendix B}

In this Appendix, we prove Theorem~\ref{thm:h3bb} of Section~\ref{sec:toralpair}.  We break the proof into three Lemmas.  Lemma~\ref{lem:1and2} addresses the proof of Theorem~\ref{thm:h3bb} (i) and (ii), Lemma~\ref{lem:3and4} addresses the proof of Theorem~\ref{thm:h3bb} (iii) and (iv), and Lemma~\ref{lem:5and6} addresses the proof of Theorem 5 (v) and (vi).

\begin{lemma}\label{lem:1and2}
Each of the following pairs, consisting of a poset $\mathcal{P}$ and a functional $F_{\mathcal{P}}$, form a toral-pair $(\mathcal{P},F_{\mathcal{P}})$.
\begin{enumerate} [label=\textup(\roman*\textup)]
        \item $\mathcal{P}_3=\{p_1,p_2,p_3,p_4\}$ with $p_1\preceq p_2\preceq p_3,p_4$; $p_3\preceq p_5$; and $p_4\preceq p_6$, and $$F_{\mathcal{P}_3}=E^*_{p_1,p_5}+E^*_{p_1,p_6}+E^*_{p_2,p_3}+E^*_{p_2,p_4}+E^*_{p_2,p_6}.$$
        \item $\mathcal{P}^*_3=\{p_1,p_2,p_3,p_4\}$ with $p_1\preceq p_3$; $p_2\preceq p_4$; and $p_3,p_4\preceq p_5\preceq p_6$, and $$F_{\mathcal{P}^*_3}=E^*_{p_1,p_6}+E^*_{p_2,p_6}+E^*_{p_3,p_5}+E^*_{p_4,p_5}+E^*_{p_2,p_5}.$$
\end{enumerate}
\end{lemma}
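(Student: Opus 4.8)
The plan is to verify directly, for each of the two pairs, that all of the conditions \textbf{(P1)}--\textbf{(P3)} and \textbf{(F1)}--\textbf{(F4)} of Definition~\ref{def:toralpair} hold, following the template already used in the proofs of Theorems~\ref{thm:h1bb} and~\ref{thm:h2bb}. I will treat case (i) in detail and obtain (ii) by the order-reversing duality (replacing $\mathcal{P}_3$ by its opposite poset sends $F_{\mathcal{P}_3}$ to $F_{\mathcal{P}^*_3}$, interchanges sources and sinks, and preserves all the listed properties). For (i), the combinatorial conditions are immediate: $Ext(\mathcal{P}_3)=\{p_1,p_5,p_6\}$ has cardinality $3$, giving \textbf{(P1)}; and since $\mathcal{P}_3$ has a unique minimal element $p_1$, the complex $\Sigma(\mathcal{P}_3)$ is a cone with apex $p_1$, hence contractible, giving \textbf{(P3)}.

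The heart of the argument is showing $\mathfrak{g}_A(\mathcal{P}_3)$ is Frobenius with Frobenius functional $F=F_{\mathcal{P}_3}$. Following Remark~\ref{rem:proofheur}, I take $B\in\mathfrak{g}_A(\mathcal{P}_3)\cap\ker(F)$ and write out the equations $F([E_{p_i,p_i},B])=0$ for $p_i\in\mathcal{P}_3$ together with $F([E_{p_i,p_j},B])=0$ for each relation $p_i\preceq p_j$. As in Theorem~\ref{thm:h2bb} I expect these to organize into a cascade: the equations attached to the "pendant" relations (those involving a non-extremal vertex as a leaf of $\Gamma_F$) force the off-diagonal coordinates $E^*_{p_i,p_j}(B)$ that are summands of $F$ to vanish one at a time; feeding these back into the remaining diagonal equations then forces $E^*_{p_i,p_i}(B)=E^*_{p_j,p_j}(B)$ for all $i,j$; and since $B\in\mathfrak{sl}(6)$ the common diagonal value is $0$, so $B=0$. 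Simultaneously this computation establishes \textbf{(F4)}: it exhibits exactly that every $B\in\mathfrak{g}(\mathcal{P}_3)\cap\ker(F)$ has equal diagonal entries and vanishing off-diagonal entries on relations.

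With $F$ known to be Frobenius, conditions \textbf{(F1)}--\textbf{(F3)} are read off from its form: $F$ is small because its five summands span a subtree of the comparability graph on the six vertices (one checks $\Gamma_F$ is connected and acyclic); the source/sink partition is $D_F(\mathcal{P}_3)=\{p_1,p_2\}$, $U_F(\mathcal{P}_3)=\{p_3,p_4,p_5,p_6\}$, $B_F(\mathcal{P}_3)=\emptyset$, and here $D_F$ is visibly an order ideal and $U_F$ a filter, giving \textbf{(F2)}; and $\Gamma_F$ contains the edges $(p_1,p_5)$ and $(p_1,p_6)$, which are precisely the relations between elements of $Ext(\mathcal{P}_3)$, giving \textbf{(F3)}. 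Finally, for \textbf{(P2)}, since $F$ satisfies the hypotheses of Theorem~\ref{thm:spec01}, the principal element $\widehat{F}$ has the stated diagonal form, and by Theorem~\ref{thm:01} the spectrum of $\mathfrak{g}_A(\mathcal{P}_3)$ consists of $0$'s and $1$'s; I then partition the basis $\mathscr{B}_{\mathcal{P}_3,F}$ of Remark~\ref{rem:binbasis} into the eigenvalue-$0$ vectors (the five elements $E_{p,p}-E_{q,q}$ coming from summands of $F$, together with each $E_{p,q}$ with $p,q$ both in $D_F$ or both in $U_F$) and the eigenvalue-$1$ vectors (each $E_{p,q}$ with $p\in D_F$, $q\in U_F$), and check these two sets have equal size. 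The main obstacle is purely bookkeeping: being careful that the cascade of equations in the Frobenius verification actually terminates and that I have listed every relation of $\mathcal{P}_3$ exactly once; the dual case (ii) then requires no new work.
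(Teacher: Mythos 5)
Your proposal is correct and follows essentially the same route as the paper's Appendix~B proof of this lemma: verify \textbf{(P1)} and \textbf{(P3)} directly, establish that $F$ is Frobenius (and simultaneously \textbf{(F4)}) by showing the linear system $F([E_{p_i,p_i},B])=0$, $F([E_{p_i,p_j},B])=0$ forces $B=0$ via exactly the cascade you describe, read off \textbf{(F1)}--\textbf{(F3)} from the form of $F$, and obtain \textbf{(P2)} by partitioning $\mathscr{B}_{\mathcal{P}_3,F}$ into eight eigenvalue-$0$ and eight eigenvalue-$1$ vectors, with (ii) handled by the order-reversing symmetry. The only difference is presentational: the paper carries out the elimination explicitly in three groups of equations, whereas you defer that bookkeeping, but the computation does terminate as you expect.
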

\begin{proof}
We prove (i), as (ii) follows via a symmetric argument. To ease notation, let $\mathcal{P}=\mathcal{P}_3$ and $F=F_{\mathcal{P}_3}$. By definition, it is clear that $|Ext(\mathcal{P})|=3$ and $\Sigma(\mathcal{P})$ is contractible so that \textbf{(P1)} and \textbf{(P3)} of Definition~\ref{def:toralpair} are satisfied. Now, if $B\in\mathfrak{g}_A(\mathcal{P})\cap\ker(F)$, then $B$ must satisfy the following restrictions which are broken into 3 groups:

\bigskip
\noindent
\textbf{Group 1:}
\begin{itemize}
    \item $F([E_{p_5,p_5},B])=-E^*_{p_1,p_5}(B)=0$,
    \item $F([E_{p_1,p_3},B])=E^*_{p_3,p_5}(B)=0$,
    \item $F([E_{p_1,p_4},B])=E^*_{p_4,p_6}(B)=0$,
    \item $F([E_{p_3,p_3},B])=-E^*_{p_2,p_3}=0$,
    \item $F([E_{p_4,p_4},B])=-E^*_{p_2,p_4}=0$,
    \item $F([E_{p_3,p_3},B])=-E^*_{p_3,p_3}=0$,
    \item $F([E_{p_2,p_5},B])=-E^*_{p_1,p_2}(B)=0$,
    \item $F([E_{p_3,p_5},B])=-E^*_{p_1,p_3}(B)=0$.
\end{itemize}
\textbf{Group 2:}
\begin{itemize}
    \item $F([E_{p_1,p_1},B])=E^*_{p_1,p_5}(B)+E^*_{p_1,p_6}(B)=0$,
    \item $F([E_{p_2,p_2},B])=E^*_{p_2,p_3}(B)+E^*_{p_2,p_4}(B)+E^*_{p_2,p_6}(B)=0$,
    \item $F([E_{p_6,p_6},B])=-E^*_{p_1,p_6}(B)-E^*_{p_2,p_6}(B)=0$,
    \item $F([E_{p_1,p_2},B])=E^*_{p_2,p_5}(B)+E^*_{p_2,p_6}(B)=0$,
    \item $F([E_{p_4,p_6},B])=-E^*_{p_1,p_4}(B)-E^*_{p_2,p_4}(B)=0$.
\end{itemize}
\textbf{Group 3:}
\begin{itemize}
    \item $F([E_{p_1,p_5},B])=E^*_{p_5,p_5}(B)-E^*_{p_1,p_1}(B)=0$,
    \item $F([E_{p_1,p_6},B])=E^*_{p_6,p_6}(B)-E^*_{p_1,p_1}(B)=0$,
    \item $F([E_{p_2,p_3},B])=E^*_{p_3,p_3}(B)-E^*_{p_2,p_2}(B)=0$,
    \item $F([E_{p_2,p_4},B])=E^*_{p_4,p_6}(B)+E^*_{p_4,p_4}(B)-E^*_{p_2,p_2}(B)=0$,
    \item $F([E_{p_2,p_6},B])=-E^*_{p_1,p_2}(B)+E^*_{p_6,p_6}(B)-E^*_{p_2,p_2}(B)=0$.
\end{itemize}
The restrictions of Group 1 immediately imply that
\begin{equation}\label{eqn:h3bb1}
E^*_{p_1,p_2}(B)=E^*_{p_1,p_3}(B)=E^*_{p_1,p_5}(B)=E^*_{p_2,p_3}(B)=E^*_{p_2,p_4}(B)=E^*_{p_3,p_5}(B)=E^*_{p_4,p_6}(B)=0.
\end{equation}
Combining the restrictions of Group 1 with those of Group 2, we find that
\begin{equation}\label{eqn:h3bb2}
E^*_{p_1,p_4}(B)=E^*_{p_1,p_6}(B)=E^*_{p_2,p_5}(B)=E^*_{p_2,p_6}(B)=0.
\end{equation}
Finally, combining the restrictions of Groups 1 and 2 with the restrictions of Group 3, we find that 
\begin{equation}\label{eqn:h3bb3}
E^*_{p_i,p_i}(B)=E^*_{p_j,p_j}(B)\text{ for all }p_i,p_j\in\mathcal{P}.
\end{equation} 
Since $B\in \mathfrak{sl}(6)$, equation (\ref{eqn:h3bb3}) implies that $E^*_{p_i,p_i}(B)=0$ for all $p_i\in\mathcal{P}$. Thus, $B=0$ and $\mathfrak{g}_A(\mathcal{P})$ is Frobenius with a choice of Frobenius functional given by $F$.

Given the form of the Frobenius functional $F$, we have that
$F$ satisfies \textbf{(F1)} through \textbf{(F4)} of Definition~\ref{def:toralpair} as follows:
\begin{itemize}
    \item $F$ is clearly small,
    \item $D_{F}(\mathcal{P})=\{p_1,p_2\}$ forms an ideal in $\mathcal{P}$, $U_{F}(\mathcal{P})=\{p_3,p_4,p_5,p_6\}$ forms a filter, and $B_{F}(\mathcal{P})=\emptyset$,
    \item $\Gamma_F$ contains the only edges, $(p_1,p_5)$ and $(p_1,p_6)$, between elements of $Ext(\mathcal{P})$, and
    \item equations (\ref{eqn:h3bb1}), (\ref{eqn:h3bb2}), and (\ref{eqn:h3bb3}) show that $F$ satisfies \textbf{(F4)} of Definition~\ref{def:toralpair}.
\end{itemize}
It remains to show that \textbf{(P2)} is satisfied; that is, $\mathfrak{g}_A(\mathcal{P})$ has a spectrum consisting of an equal number of 0's and 1's. To determine the spectrum of $\mathfrak{g}_A(\mathcal{P})$, it suffices to calculate $[\widehat{F},x]$ for $x\in\mathscr{B}_{\mathcal{P},F}$. Note that $\mathscr{B}_{\mathcal{P},F}$ can be partitioned into two sets: 
$$G_0=\{E_{p_5,p_5}-E_{p_1,p_1},E_{p_6,p_6}-E_{p_1,p_1},E_{p_3,p_3}-E_{p_2,p_2},E_{p_4,p_4}-E_{p_2,p_2},E_{p_6,p_6}-E_{p_2,p_2},E_{p_1,p_2},E_{p_3,p_5},E_{p_4,p_6}\}$$
which consists of eigenvectors of $ad(\widehat{F})$ with eigenvalue 0, and
$$G_1=\{E_{p_1,p_5},E_{p_1,p_6},E_{p_1,p_3},E_{p_1,p_4},E_{p_2,p_5},E_{p_2,p_6},E_{p_2,p_3},E_{p_2,p_4}\}$$
which are eigenvectors with eigenvalue 1. As $|G_0|=|G_1|$, we conclude that $\mathfrak{g}_A(\mathcal{P})$ has a binary spectrum and $(\mathcal{P},F)$ forms a toral-pair.
\end{proof}

\begin{lemma}\label{lem:3and4}
Each of the following pairs, consisting of a poset $\mathcal{P}$ and a functional $F_{\mathcal{P}}$, form a toral-pair $(\mathcal{P},F_{\mathcal{P}})$.
\begin{enumerate} [label=\textup(\roman*\textup)]
    \setcounter{enumi}{2}
    \item $\mathcal{P}_{4,n}=\{p_1,\hdots,p_n\}$ with $p_1\preceq p_2\preceq\hdots\preceq p_{n-1}$ as well as $p_1\preceq p_2\preceq\hdots\preceq p_{\lfloor\frac{n}{2}\rfloor}\preceq p_n$, and $$F_{\mathcal{P}_{4,n}}=\sum_{i=1}^{\lfloor\frac{n-1}{2}\rfloor}E^*_{p_i,p_{n-i}}+\sum_{i=1}^{\lfloor\frac{n}{2}\rfloor}E^*_{p_i,p_n}.$$
    \item $\mathcal{P}^*_{4,n}=\{p_1,\hdots,p_n\}$ with $p_1\preceq p_2\preceq \hdots\preceq p_{\lfloor\frac{n-1}{2}\rfloor}\preceq \lceil\frac{n}{2}\rceil\preceq\hdots\preceq p_{n}$ as well as $p_{\lfloor\frac{n-1}{2}\rfloor+1}\preceq p_{\lfloor\frac{n-1}{2}\rfloor+2}\preceq \hdots\preceq p_{n}$, and $$F_{\mathcal{P}^*_{4,n}}=\sum_{i=1}^{\lfloor\frac{n-1}{2}\rfloor}E^*_{p_i,p_{n+1-i}}+\sum_{i=\lceil\frac{n}{2}\rceil}^{n}E^*_{p_{\lceil\frac{n}{2}\rceil},p_i}.$$
\end{enumerate}
\end{lemma}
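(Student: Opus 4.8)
The plan is to mirror the proofs of Theorem~\ref{thm:h2bb} and Lemma~\ref{lem:1and2}: for each pair I verify properties \textbf{(P1)}--\textbf{(P3)} and \textbf{(F1)}--\textbf{(F4)} of Definition~\ref{def:toralpair} in turn. Since $\mathcal{P}^*_{4,n}$ is (isomorphic to) the order dual of $\mathcal{P}_{4,n}$ --- one minimal element versus one maximal element --- and $F_{\mathcal{P}^*_{4,n}}$ is obtained from $F_{\mathcal{P}_{4,n}}$ by reversing all relations, part (iv) will follow from (iii) by the order-reversal symmetry, so I focus on $\mathcal{P}=\mathcal{P}_{4,n}$ and $F=F_{\mathcal{P}_{4,n}}$. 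Here $\mathcal{P}$ has the single minimal element $p_1$ and exactly the two maximal elements $p_{n-1}$ and $p_n$, so $|Ext(\mathcal{P})|=3$, giving \textbf{(P1)}; and every chain of $\mathcal{P}$ lies either in the main chain $\{p_1,\dots,p_{n-1}\}$ or in $\{p_1,\dots,p_{\lfloor n/2\rfloor},p_n\}$, so $\Sigma(\mathcal{P})$ is a union of two simplices glued along the common face $\{p_1,\dots,p_{\lfloor n/2\rfloor}\}$ and is therefore contractible (a union of two contractible complexes along a contractible subcomplex; alternatively collapse the simplex containing $p_n$ onto that face), giving \textbf{(P3)}.

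Next I read off \textbf{(F1)}--\textbf{(F3)} from $\Gamma_F$. The $n-1$ relations appearing as summands of $F$ form a connected subgraph on $\{p_1,\dots,p_n\}$ --- every $p_i$ with $i\le\lfloor n/2\rfloor$ is joined to $p_n$, and every $p_{n-i}$ with $1\le i\le\lfloor(n-1)/2\rfloor$ is joined to $p_i$ --- hence a spanning tree, so $F$ is small. Reading off sources and sinks (all edges run from lower to higher index) gives $D_F(\mathcal{P})=\{p_1,\dots,p_{\lfloor n/2\rfloor}\}$, a prefix of the main chain and hence an ideal; $U_F(\mathcal{P})=\{p_{\lfloor n/2\rfloor+1},\dots,p_{n-1},p_n\}$, a filter; and $B_F(\mathcal{P})=\emptyset$, which is \textbf{(F2)}. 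Finally \textbf{(F3)} holds because the only comparabilities inside $Ext(\mathcal{P})=\{p_1,p_{n-1},p_n\}$ are $p_1\preceq p_{n-1}$ and $p_1\preceq p_n$, both of which are summands of $F$ (the $i=1$ terms of the two sums).

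The technical heart is \textbf{(F4)}, which will simultaneously show $F$ is Frobenius. Given $B\in\mathfrak{g}(\mathcal{P})$ with $F([x,B])=0$ for all $x$, I would first peel the tree $\Gamma_F$ from its non-extremal leaves inward, exactly as in the procedure $Proc$ of Lemma~\ref{lem:bl1}: a current leaf $p\notin Ext(\mathcal{P})$ yields $F([E_{p,p},B])=\pm E^*_{p,q}(B)=0$, forcing that summand of $F$ to vanish on $B$; after finitely many steps only the edges $(p_1,p_{n-1})$ and $(p_1,p_n)$ remain, and $F([E_{p_{n-1},p_{n-1}},B])=0$, $F([E_{p_n,p_n},B])=0$ kill the last two, so $E^*_{p,q}(B)=0$ for every summand $E^*_{p,q}$ of $F$. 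Then the equations $F([E_{p,q},B])=0$ for the relations $(p,q)$ appearing in $F$ read $E^*_{q,q}(B)-E^*_{p,p}(B)=0$, and connectivity of $\Gamma_F$ forces all diagonal entries of $B$ equal; finally the remaining equations $F([E_{p,q},B])=0$, processed in order of increasing length of the relation $p\preceq q$, force $E^*_{p,q}(B)=0$ for all $p\preceq q$. This is \textbf{(F4)}; restricting to $\mathfrak{sl}(n)$ forces the common diagonal value to be $0$, so $B=0$ and $\mathfrak{g}_A(\mathcal{P})$ is Frobenius with Frobenius functional $F$.

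It remains to establish \textbf{(P2)}. With \textbf{(F1)}--\textbf{(F2)} in hand, Theorem~\ref{thm:spec01} gives the form of $\widehat{F}$ and Theorem~\ref{thm:01} shows the spectrum consists of $0$'s and $1$'s; moreover, as in the proof of Theorem~\ref{thm:01}, in the basis $\mathscr{B}_{\mathcal{P},F}$ the eigenvalue-$1$ eigenvectors are exactly the $E_{p,q}$ with $p\in D_F(\mathcal{P})$ and $q\in U_F(\mathcal{P})$, all other basis vectors having eigenvalue $0$. Since $D_F(\mathcal{P})$ is an ideal there are no relations from $U_F(\mathcal{P})$ to $D_F(\mathcal{P})$, so $|Rel(\mathcal{P})|$ splits as (relations inside $D_F$) $+$ (relations inside $U_F$) $+$ (relations from $D_F$ to $U_F$); hence the spectrum is binary exactly when twice the number of relations from $D_F(\mathcal{P})$ to $U_F(\mathcal{P})$ equals $\dim\mathfrak{g}_A(\mathcal{P})=|Rel(\mathcal{P})|+n-1$. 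I would verify this by a direct count: the relations from $D_F$ to $U_F$ number $\lfloor n/2\rfloor\lceil n/2\rceil=\lfloor n^2/4\rfloor$, while $|Rel(\mathcal{P}_{4,n})|=\binom{n-1}{2}+\lfloor n/2\rfloor$, and the identity $2\lfloor n^2/4\rfloor=\binom{n-1}{2}+\lfloor n/2\rfloor+(n-1)$ then holds, checking $n$ even and $n$ odd separately. The main obstacle I expect is the bookkeeping in \textbf{(F4)} and \textbf{(P2)}: making the peel-and-propagate argument airtight uniformly in $n$ requires pinning down the order in which the non-summand relations $p\preceq q$ are eliminated, and the floor/ceiling functions in the definitions of $\mathcal{P}_{4,n}$ and $F_{\mathcal{P}_{4,n}}$ force one to carry the parity of $n$ through the computation; a cleaner alternative would be to organize the linear system into ``Group 1 / Group 2 / Group 3'' as in Lemma~\ref{lem:1and2}, or to induct on $n$ by deleting $p_{n-1}$ and $p_n$, after which the residual work is routine.
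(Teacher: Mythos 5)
Your overall route coincides with the paper's: verify \textbf{(P1)}, \textbf{(P3)}, \textbf{(F1)}--\textbf{(F3)} directly, establish \textbf{(F4)} and the Frobenius property by analyzing $B\in\mathfrak{g}(\mathcal{P})\cap\ker(F)$, and get \textbf{(P2)} from Theorems~\ref{thm:spec01} and~\ref{thm:01} plus a count. Your treatment of \textbf{(P1)}, \textbf{(P3)}, \textbf{(F1)}--\textbf{(F3)}, the dual reduction for $\mathcal{P}^*_{4,n}$, and the binary-spectrum count are fine; indeed your criterion (binary iff twice the number of relations from $D_F$ to $U_F$ equals $|Rel(\mathcal{P})|+n-1$, verified via $2\lfloor n^2/4\rfloor=\binom{n-1}{2}+\lfloor n/2\rfloor+(n-1)$) is a cleaner bookkeeping of what the paper does by listing $S_1,\dots,S_5$ with explicit parity-dependent cardinalities.

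The gap is in the step you yourself flag as the technical heart, \textbf{(F4)}. After the leaf-peeling (which is correct and kills $E^*_{p,q}(B)$ for summands of $F$), your next claim --- that the equations $F([E_{p,q},B])=0$ for summands $(p,q)$ of $F$ ``read $E^*_{q,q}(B)-E^*_{p,p}(B)=0$'' --- is false as stated: since several summands of $F$ share endpoints, these equations carry extra off-diagonal terms, e.g.\ $F([E_{p_i,p_n},B])=E^*_{p_n,p_n}(B)-E^*_{p_i,p_i}(B)-\sum_{j<i}E^*_{p_j,p_i}(B)$, so equality of the diagonal entries cannot be concluded before the other off-diagonal entries are eliminated. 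Moreover, your proposed elimination order (``increasing length of the relation'') points the wrong way for this poset: the entries supported on \emph{short} relations inside $D_F$ or $U_F$ are killed only by equations indexed by \emph{long} cross relations, e.g.\ $F([E_{p_i,p_{n-j}},B])=-E^*_{p_j,p_i}(B)$ for $j<i\le\lfloor\frac{n}{2}\rfloor$ and $F([E_{p_i,p_{n-j}},B])=E^*_{p_{n-j},p_{n-i}}(B)$ for $i<j$, while the equations indexed by short relations $(p_i,p_j)$ with $i<j\le\lfloor\frac{n}{2}\rfloor$ yield mixed sums like $E^*_{p_j,p_{n-i}}(B)+E^*_{p_j,p_n}(B)=0$ that are only useful after substitution. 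This is exactly the content of the paper's Group~1/Group~2/Group~3 organization (the fallback you mention), which processes the non-summand equations first and only then extracts the diagonal equalities; until that elimination scheme is written out, the Frobenius claim and \textbf{(F4)} remain unproved, so the argument as proposed does not yet close.
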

\begin{proof}
We prove (i), as (ii) follows via a symmetric argument. To ease notation, let $\mathcal{P}=\mathcal{P}_{4,n}$ and $F=F_{\mathcal{P}_{4,n}}$. By definition, it is clear that $|Ext(\mathcal{P})|=3$ so that \textbf{(P1)} of Definition~\ref{def:toralpair} is satisfied. To see that \textbf{(P3)} of Definition~\ref{def:toralpair} is satisfied, i.e., $\Sigma(\mathcal{P})$ is contractible, note that $\Sigma(\mathcal{P})$ is formed by adjoining a $\lceil\frac{n}{2}\rceil$-simplex to a $(n-1)$-simplex along a face; such a space is star-convex and thus contractible. Now, if $B\in\mathfrak{g}_A(\mathcal{P})\cap\ker(F)$, then $B$ must satisfy the following restrictions which are broken into 3 groups:

\bigskip
\noindent
\textbf{Group 1:}
\begin{itemize}
    \item $F([E_{p_{\lfloor\frac{n}{2}\rfloor},p_{\lfloor\frac{n}{2}\rfloor}},B])=E^*_{p_{\lfloor\frac{n}{2}\rfloor},p_n}(B)=0$ (for $n$ even),
    \item $F([E_{p_{n-i},p_{n-i}},B])=-E^*_{p_i,p_{n-i}}(B)=0$ for $1\le i\le \lfloor\frac{n-1}{2}\rfloor$,
    \item $F([E_{p_i,p_{n-j}},B])=E^*_{p_{n-j},p_{n-i}}(B)=0$ for $1\le i<j\le \lfloor\frac{n}{2}\rfloor$,
    \item $F([E_{p_i,p_{n-j}},B])=-E^*_{p_j,p_i}(B)=0$ for $1\le j<i\le \lfloor\frac{n}{2}\rfloor$,
    \item $F([E_{p_{n-j},p_{n-i}},B])=-E^*_{p_i,p_{n-j}}(B)=0$ for $1\le i<j\le\lfloor\frac{n}{2}\rfloor$.
\end{itemize}
\textbf{Group 2:}
\begin{itemize}
    \item $F([E_{p_i,p_i},B])=E^*_{p_i,p_{n-i}}(B)+E^*_{p_i,p_n}=0$ for $1\le i\le \lfloor\frac{n-1}{2}\rfloor$,
    \item $F([E_{p_i,p_j},B])=E^*_{p_j,p_{n-i}}(B)+E^*_{p_j,p_n}(B)=0$ for $1\le i<j\le \lfloor\frac{n}{2}\rfloor$.
\end{itemize}
\textbf{Group 3:}
\begin{itemize}
    \item $F(E_{p_i,p_{n-i}},B])=E^*_{p_{n-i},p_{n-i}}(B)-E^*_{p_i,p_i}(B)=0$ for $1\le i\le \lfloor\frac{n-1}{2}\rfloor$,
    \item $F([E_{p_i,p_n},B])=E^*_{p_n,p_n}(B)-E^*_{p_i,p_i}(B)-\sum_{1\le j<i}E^*_{p_j,p_i}(B)=0$ for $1\le i\le \lfloor\frac{n}{2}\rfloor$.
\end{itemize}
The restrictions of Group 1 immediately imply that
\begin{equation}\label{eqn:hnbb11}
E^*_{p_{\lfloor\frac{n}{2}\rfloor},p_n}(B)=0\text{ for }n\text{ even},
\end{equation}
\begin{equation}\label{eqn:hnbb12}
E^*_{p_i,p_{n-i}}(B)=0\text{ for }1\le i\le \bigg\lfloor\frac{n-1}{2}\bigg\rfloor,
\end{equation}
\begin{equation}\label{eqn:hnbb13}
E^*_{p_{n-j},p_{n-i}}(B)=0\text{ for }1\le i<j\le \bigg\lfloor\frac{n}{2}\bigg\rfloor,
\end{equation}
\begin{equation}\label{eqn:hnbb14}
E^*_{p_j,p_i}(B)=0\text{ for }1\le j<i\le \bigg\lfloor\frac{n}{2}\bigg\rfloor,
\end{equation}
and
\begin{equation}\label{eqn:hnbb15}
E^*_{p_i,p_{n-j}}(B)=0\text{ for }1\le i<j\le\bigg\lfloor\frac{n}{2}\bigg\rfloor.
\end{equation}
Combining the restrictions of Group 1 with those of Group 2, we can conclude that 
\begin{equation}\label{eqn:hnbb16}
E^*_{p_i,p_n}(B)=0\text{ for }1\le i\le \bigg\lfloor\frac{n-1}{2}\bigg\rfloor
\end{equation}
and 
\begin{equation}\label{eqn:hnbb17}
E^*_{p_j,p_{n-i}}(B)=0\text{ for }1\le i<j\le \bigg\lfloor\frac{n}{2}\bigg\rfloor.
\end{equation}
Finally, combining the restrictions of Groups 1 and 2 with those of Group 3, we find
that 
\begin{equation}\label{eqn:hnbb18}
E^*_{p_i,p_i}(B)=E^*_{p_j,p_j}(B)\text{ for }p_i,p_j\in\mathcal{P}.
\end{equation}
Since $B\in\mathfrak{sl}(n)$, equation (\ref{eqn:hnbb18}) implies that $E^*_{p_i,p_i}(B)=0$ for all $i\in\mathcal{P}$. Thus, $B=0$ and $\mathfrak{g}_A(\mathcal{P})$ is Frobenius with a choice of Frobenius functional given by $F$.

Given the form of the Frobenius functional $F$, we have that
$F$ satisfies \textbf{(F1)} through \textbf{(F4)} of Definition~\ref{def:toralpair} as follows:
\begin{itemize}
    \item $F$ is clearly small,
    \item $D_{F}(\mathcal{P})=\{p_i~|~1\le i\le \lfloor\frac{n}{2}\rfloor\}$ forms an ideal in $\mathcal{P}$, $U_{F}(\mathcal{P})=\{p_i~|~\lfloor\frac{n}{2}\rfloor< i\le n \}$ forms a filter, and $B_{F}(\mathcal{P})=\emptyset$,
    \item $\Gamma_F$ contains the only edges, $(p_1,p_{n-1})$ and $(p_1,p_n)$, between elements of $Ext(\mathcal{P})$, and
    \item equations (\ref{eqn:hnbb11}) through (\ref{eqn:hnbb18}) show that $F$ satisfies \textbf{(F4)} of Definition~\ref{def:toralpair}.
\end{itemize}
It remains to show that \textbf{(P2)} is satisfied; that is, $\mathfrak{g}_A(\mathcal{P})$ has a spectrum consisting of an equal number of 0's and 1's. To determine the spectrum of $\mathfrak{g}_A(\mathcal{P})$, it suffices to calculate $[\widehat{F},x]$ for $x\in\mathscr{B}_{\mathcal{P},F}$. Note that the elements of $\mathscr{B}_{\mathcal{P},F}$ can be partitioned as follows: $$S_1=\{E_{p_i,p_j}~|~1\preceq i\prec j\preceq \bigg\lfloor\frac{n}{2}\bigg\rfloor\},$$
$$S_2=\{E_{p_i,p_{n-j}}~|~1\le i,j\le \bigg\lfloor\frac{n}{2}\bigg\rfloor\text{ and }i\neq n-j\},$$
$$S_3=\{E_{p_{n-j},p_{n-i}}~|~1\le i<j\le \bigg\lfloor\frac{n-1}{2}\bigg\rfloor\},$$
$$S_4=\{E_{p_i,p_n}~|~1\preceq i\preceq \bigg\lfloor\frac{n}{2}\bigg\rfloor\},$$
$$S_5=\{E_{p_i,p_i}-E_{p_n,p_n}~|~1\le i<n\},$$ where 

$$|S_1| = \sum_{i=1}^{\lfloor\frac{n}{2}\rfloor}\bigg(\bigg\lfloor\frac{n}{2}\bigg\rfloor-i\bigg)=  \begin{cases} 
      \frac{n^2-4n+3}{8}, & n\text{ odd}; \\
                                                &                        \\
      \frac{n^2-2n}{8}, & n\text{ even},
   \end{cases}$$

$$|S_2| = \begin{cases} 
      \lfloor\frac{n-1}{2}\rfloor, & n\text{ odd}; \\
                                                &                        \\
      \lfloor\frac{n-1}{2}\rfloor(\lfloor\frac{n-1}{2}\rfloor-1), & n\text{ even},
   \end{cases} = \begin{cases} 
     \frac{n^2-2n+1}{4}, & n\text{ odd}; \\
                                                &                        \\
      \frac{n^2-2n}{4}, & n\text{ even},
   \end{cases}$$

$$|S_3| = \sum_{i=1}^{\lfloor\frac{n-1}{2}\rfloor}\bigg(\bigg\lfloor\frac{n-1}{2}\bigg\rfloor-i\bigg)= \begin{cases} 
      \frac{n^2-4n+3}{8}, & n\text{ odd}; \\
                                                &                        \\
      \frac{n^2-6n+8}{8}, & n\text{ even},
   \end{cases}$$

$$|S_4| = \bigg\lfloor\frac{n}{2}\bigg\rfloor = \begin{cases} 
      \frac{n-1}{2}, & n\text{ odd}; \\
                                                &                        \\
      \frac{n}{2}, & n\text{ even},
   \end{cases}$$  and 

$$|S_5|=n-1.$$ Furthermore, note that the elements contained in $G_0=S_1\cup S_3\cup S_5$ are eigenvalues of $ad(\widehat{F})$ with eigenvalue 0, while the elements contained in $G_1=S_2\cup S_4$ are eigenvectors with eigenvalue 1. Thus, as $\lfloor\frac{n}{2}\rfloor=\lfloor\frac{n-1}{2}\rfloor=\frac{n-1}{2}$ when $n$ is odd and $\lfloor\frac{n-1}{2}\rfloor=\frac{n-2}{2}$ when $n$ is even,
\[|G_0|=|G_1| =  \begin{cases} 
      \frac{n^2-1}{4}, & n\text{ odd}; \\
                                                &                        \\
      \frac{n^2}{4}, & n\text{ even}.
   \end{cases}
\]
Therefore, $\mathfrak{g}_A(\mathcal{P})$ has a binary spectrum and $(\mathcal{P},F)$ is a toral-pair.
\end{proof}

\begin{lemma}\label{lem:5and6}
Each of the following pairs, consisting of a poset $\mathcal{P}$ and a functional $F_{\mathcal{P}}$, form a toral-pair $(\mathcal{P},F_{\mathcal{P}})$.
\begin{enumerate} [label=\textup(\roman*\textup)]
    \setcounter{enumi}{4}
    \item $\mathcal{P}_{5,n}=\{p_1,\hdots,p_{2n+1}\}$ with $p_i\preceq p_j$ for $1\le i<2n$ odd and $i+1\le j\le 2n+1$ as well as $p_i\preceq p_j$ for $1<i<2n$ even and $i+2\le j\le 2n+1$, and $$F_{\mathcal{P}_{5,n}}=E^*_{p_1,p_{2n+1}}+\sum_{i=1}^{\lceil\frac{n-1}{2}\rceil}E^*_{p_i, p_{2n}}+\sum_{k=1}^{\lfloor\frac{n-1}{2}\rfloor}E^*_{p_{2k}, p_{2n-2k}}+\sum_{k=1}^{\lfloor\frac{n-1}{2}\rfloor}E^*_{p_{2k+1}, p_{2n-2k+1}}.$$
    \item $\mathcal{P}^*_{5,n}=\{p_1,\hdots,p_{2n+1}\}$ with $p_i\preceq p_j$ for $1\le i<2n$ odd and $i+2\le j\le 2n+1$, as well as $p_i\preceq p_j$ for $1< i<2n$ even and $i+1\le j\le 2n+1$, and $$F_{\mathcal{P}^*_{5,n}}=E^*_{p_1,p_{2n+1}}+\sum_{i=2\lfloor\frac{n+1}{4}\rfloor+1}^{2n+1}E^*_{p_2, p_i}+\sum_{k=2}^{\lfloor\frac{n+1}{2}\rfloor}E^*_{p_{2k}, p_{2n-2k+4}}+\sum_{k=1}^{\lfloor\frac{n-1}{2}\rfloor}E^*_{p_{2k+1}, p_{2n-2k+1}}.$$
\end{enumerate}
\end{lemma}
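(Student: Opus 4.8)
The plan is to mirror the proofs of Theorems~\ref{thm:h1bb}--\ref{thm:h2bb} and Lemmas~\ref{lem:1and2}--\ref{lem:3and4}: establish part (i) in full, and then obtain (ii) from the order-reversing duality on posets, after first confirming that $\mathcal{P}^*_{5,n}$ is the order-dual of $\mathcal{P}_{5,n}$ and that $F_{\mathcal{P}^*_{5,n}}$ is the functional induced on the dual. So set $\mathcal{P}=\mathcal{P}_{5,n}$ and $F=F_{\mathcal{P}_{5,n}}$. Property \textbf{(P1)} is immediate since $Ext(\mathcal{P})=\{p_1,p_{2n},p_{2n+1}\}$. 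For \textbf{(P3)}, note that $p_1\preceq p_j$ for every $j$, so every chain of $\mathcal{P}$ extends to one containing $p_1$; hence $\Sigma(\mathcal{P})$ is a cone with apex the vertex $p_1$ and is therefore contractible (this is the analogue of the star-convexity argument used in Lemma~\ref{lem:3and4}).

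Next I would show that $\mathfrak{g}_A(\mathcal{P})$ is Frobenius with Frobenius functional $F$, following the heuristic of Remark~\ref{rem:proofheur}. Take $B\in\mathfrak{g}_A(\mathcal{P})\cap\ker(F)$ and write down the system $F([E_{p_i,p_i},B])=0$ and $F([E_{p_i,p_j},B])=0$. As in the earlier proofs, organize the equations into three groups: \textbf{Group 1}, the equations $F([E_{p_i,p_i},B])=0$ for $p_i$ a leaf of $\Gamma_F$ together with the equations $F([E_{p_i,p_j},B])=0$ for those relations $p_i\preceq p_j$ whose left side reduces to a single coordinate $\pm E^*_{r,s}(B)$ — these directly annihilate a first batch of off-diagonal coordinates; \textbf{Group 2}, the equations which, after substituting the Group 1 vanishings, annihilate the remaining off-diagonal coordinates, in particular every summand of $F$; and \textbf{Group 3}, the equations $F([E_{p_i,p_j},B])=0$ attached to the summands $E^*_{p_i,p_j}$ of $F$, which then collapse to $E^*_{p_i,p_i}(B)=E^*_{p_j,p_j}(B)$ for all $i,j$. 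Since $B\in\mathfrak{sl}(2n+1)$, this forces $B=0$. Executing this requires following the edges of $\Gamma_F$ through the ``doubled-chain'' layers of $\mathcal{P}_{5,n}$ and splitting on the parity of $n$ wherever the floor and ceiling functions in $F$ require; this is the bulk of the work.

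With Frobeniusness in hand, \textbf{(F1)}--\textbf{(F4)} are checked exactly as in Lemma~\ref{lem:3and4}: $\Gamma_F$ is a spanning tree of the comparability graph of $\mathcal{P}$, so \textbf{(F1)} holds; $D_F(\mathcal{P})$ (the sources of $\Gamma_F$) is an initial segment and hence an ideal, $U_F(\mathcal{P})$ (the sinks) is a filter, and $B_F(\mathcal{P})=\emptyset$, giving \textbf{(F2)}; $\Gamma_F$ contains the two edges of $Rel_E(\mathcal{P})$, namely $(p_1,p_{2n})$ and $(p_1,p_{2n+1})$, giving \textbf{(F3)}; and the vanishing relations produced in the Frobenius step are precisely \textbf{(F4)}.

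Finally, for \textbf{(P2)}, I would invoke the basis $\mathscr{B}_{\mathcal{P},F}$ of Remark~\ref{rem:binbasis} together with the eigenvalue computation in the proof of Theorem~\ref{thm:01} (whose hypotheses are supplied by \textbf{(F1)}--\textbf{(F2)} via Theorem~\ref{thm:spec01}): every $E_{p,p}-E_{q,q}$ and every $E_{p,q}$ with $p,q$ both in $D_F(\mathcal{P})$ or both in $U_F(\mathcal{P})$ has eigenvalue $0$ under $ad(\widehat{F})$, while every $E_{p,q}$ with $p\in D_F(\mathcal{P})$ and $q\in U_F(\mathcal{P})$ has eigenvalue $1$. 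Partitioning $\mathscr{B}_{\mathcal{P},F}$ into the corresponding sets $G_0$ and $G_1$, I would enumerate each as an explicit, parity-dependent function of $n$ — counting relations inside $D_F(\mathcal{P})$, inside $U_F(\mathcal{P})$, and across — in the style of the sets $S_1,\dots,S_5$ of Lemma~\ref{lem:3and4}, and verify $|G_0|=|G_1|$ for $n$ even and for $n$ odd separately. I expect this cardinality count, along with the Group 1/2/3 bookkeeping above, to be the main obstacle: there is no conceptual content beyond what Lemma~\ref{lem:3and4} already illustrates, but the index arithmetic for $\mathcal{P}_{5,n}$, and still more for the substantially more intricate functional $F_{\mathcal{P}^*_{5,n}}$, is delicate, and one must first confirm that the duality between the two cases is exact before delegating (ii) to it.
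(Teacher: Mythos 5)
Your outline follows the same route as the paper's proof: verify \textbf{(P1)} and \textbf{(P3)} directly, establish that $F_{\mathcal{P}_{5,n}}$ is Frobenius by the kernel computation of Remark~\ref{rem:proofheur}, read off \textbf{(F1)}--\textbf{(F4)}, count eigenvalues via $\mathscr{B}_{\mathcal{P},F}$ and Theorems~\ref{thm:spec01} and~\ref{thm:01} for \textbf{(P2)}, and dispose of (ii) by duality. Your cone argument for \textbf{(P3)} is correct and in fact cleaner than the paper's suspension argument, since $p_1\preceq p_j$ for all $j$ makes $\Sigma(\mathcal{P}_{5,n})$ a cone with apex $p_1$. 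The genuine gap is that everything after this point is announced rather than done. You never write down the system $F([E_{p,p},B])=0$, $F([E_{p,q},B])=0$ for this particular poset, never derive the vanishings $E^*_{p,q}(B)=0$ and the diagonal equalities $E^*_{p,p}(B)=E^*_{q,q}(B)$ (which are literally the content of \textbf{(F4)} and of the Frobenius claim), never identify $D_F(\mathcal{P})$ and $U_F(\mathcal{P})$ explicitly (the paper finds $D_F(\mathcal{P})=\{p_i~|~1\le i\le 2\lceil\frac{n-1}{2}\rceil+1\}$ and $U_F(\mathcal{P})$ its complement), and never carry out the count $|G_0|=|G_1|$. In the paper these steps require six groups of equations, a chain of substitutions threading the parity of the indices, and a parity-split enumeration of the basis into classes $S_1,\dots,S_4$; your anticipated ``three groups'' in the style of Lemma~\ref{lem:1and2} understates this bookkeeping, and phrases like ``this is the bulk of the work'' and ``I expect this cardinality count to be the main obstacle'' concede that the proof has not been executed. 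As written, the proposal proves \textbf{(P1)} and \textbf{(P3)} and is otherwise a plan.

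The same applies to the reduction of (ii) to (i). You are right that $\mathcal{P}^*_{5,n}$ is the order dual of $\mathcal{P}_{5,n}$ (both are ordinal sums of antichains, of sizes $1,2,\dots,2$ and $2,\dots,2,1$ respectively), but the other half of your proposed reduction --- that $F_{\mathcal{P}^*_{5,n}}$ is the image of $F_{\mathcal{P}_{5,n}}$ under the transpose anti-isomorphism and relabeling --- is not visibly true from the formulas: the lower limit $2\lfloor\frac{n+1}{4}\rfloor+1$ in the second sum of $F_{\mathcal{P}^*_{5,n}}$ has no counterpart in $F_{\mathcal{P}_{5,n}}$, so the ``symmetric argument'' is a genuine verification (either a direct rerun of the kernel computation for $\mathcal{P}^*_{5,n}$, or an explicit matching of the two functionals), and you flag it without performing it. Supplying the omitted computations --- the kernel analysis, the explicit $D_F/U_F$, the binary count for both parities of $n$, and the check for (ii) --- is what would turn this outline into a proof.
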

\begin{proof}
We prove (i), as (ii) follows via a symmetric argument. To ease notation, let $\mathcal{P}=\mathcal{P}_{5,n}$ and $F=F_{\mathcal{P}_{5,n}}$. By definition, it is clear that $|Ext(\mathcal{P})|=3$ so that \textbf{(P1)} of Definition~\ref{def:toralpair} is satisfied. To see that \textbf{(P3)} of Definition~\ref{def:toralpair} is satisfied, i.e., $\Sigma(\mathcal{P})$ is contractible, note that $\Sigma(\mathcal{P})$ is formed by taking $n$ suspensions of a point; since the suspension of a contractible simplicial complex is contractible, the result follows. Now, if $B\in\mathfrak{g}_A(\mathcal{P})\cap\ker(F)$, then $B$ must satisfy the following restrictions which are broken into six groups:

\bigskip
\noindent
\textbf{Group 1:}
\begin{itemize}
    \item $F([E_{p_i,p_{2n+1}},B])=-E^*_{p_1,p_i}(B)=0$ for $1<i<2n$,
    \item $F([E_{p_{2n+1},p_{2n+1}},B])=-E^*_{p_1,p_{2n+1}}(B)=0$,
    \item $F([E_{p_1,p_1},B])=E^*_{p_1,p_{2n}}(B)+E^*_{p_1,p_{2n+1}}(B)=0$.
\end{itemize}
\textbf{Group 2:}
\begin{itemize}
    \item $F([E_{p_{2n-2k},p_{2n-2k}},B])=-E^*_{p_{2k},p_{2n-2k}}(B)=0$ for $1\le k\le \lfloor\frac{n-1}{2}\rfloor$,
    \item $F([E_{p_{2n-2k+1},p_{2n-2k+1}},B])=-E^*_{p_{2k+1},p_{2n-2k+1}}(B)=0$ for $1\le k\le \lfloor\frac{n-1}{2}\rfloor$.
\end{itemize}
\textbf{Group 3:} 
\begin{itemize}
    \item $F([E_{p_{2k},p_{2k}},B])=E^*_{p_{2k},p_{2n-2k}}(B)+E^*_{p_{2k},p_{2n}}(B)=0$ for $1\le k\le \lfloor\frac{n-1}{2}\rfloor$,
    \item $F([E_{p_{2k+1},p_{2k+1}},B])=E^*_{p_{2k+1},p_{2n-2k+1}}(B)+E^*_{p_{2k+1},p_{2n}}(B)=0$ for $1\le k\le \lfloor\frac{n-1}{2}\rfloor$,
    \item $F([E_{p_{2k},p_{2k}},B])=E^*_{p_{2k},p_{2n}}(B)$ for $n=\lceil\frac{n-1}{2}\rceil$ and $n$ even,
    \item $F([E_{p_{2k+1},p_{2k+1}},B])=E^*_{p_{2k+1},p_{2n}}(B)$ for $n=\lceil\frac{n-1}{2}\rceil$ and $n$ even,
    \item $F([E_{p_{2k+1}, p_{2n-2k}},B])=E^*_{p_{2n-2k}, p_{2n}}(B)=0$ for $1\le k\le \lfloor\frac{n-1}{2}\rfloor$,
    \item $F([E_{p_{2k}, p_{2n-2k+1}},B])=E^*_{p_{2n-2k+1}, p_{2n}}(B)=0$ for $1\le k\le \lfloor\frac{n-1}{2}\rfloor$,
    \item $F([E_{p_1,p_i},B])=E^*_{p_i,p_{2n}}(B)+E^*_{p_i,p_{2n+1}}(B)=0$ for $1<i<2n$.
\end{itemize}
\textbf{Group 4:}
\begin{itemize}
    \item $F([E_{p_{2k},p_{2n-2k_1}},B])=E^*_{p_{2n-2k_1}, p_{2n}}(B)-E^*_{p_{2k_1}, p_{2k}}(B)=0$ for $1\le k_1<k\le\lceil\frac{n-1}{2}\rceil$,
    \item $F([E_{p_{2k},p_{2n-2k_1+1}},B])=E^*_{p_{2n-2k_1+1}, p_{2n}}(B)-E^*_{p_{2k_1+1}, p_{2k}}(B)=0$ for $1\le k_1<k\le\lceil\frac{n-1}{2}\rceil$,
    \item $F([E_{p_{2k+1}, p_{2n-2k_1}},B])=E^*_{p_{2n-2k_1}, p_{2n}}(B)-E^*_{p_{2k_1}, p_{2k+1}}(B)=0$ for $1\le k_1<k\le\lceil\frac{n-1}{2}\rceil$,
    \item $F([E_{p_{2k+1},p_{2n-2k_1+1}},B])=E^*_{p_{2n-2k_1+1}, p_{2n}}(B)-E^*_{p_{2k_1+1}, p_{2k+1}}(B)=0$ for $1\le k_1<k\le\lceil\frac{n-1}{2}\rceil$.
\end{itemize}
\textbf{Group 5:}
\begin{itemize}
    \item $F([E_{p_{2n-2k},p_{2n-2k_1}},B])=-E^*_{p_{2k_1},p_{2n-2k}}(B)=0$ for $1\le k_1<k\le\lfloor\frac{n-1}{2}\rfloor$,
    \item $F([E_{p_{2n-2k},p_{2n-2k_1+1}},B])=-E^*_{p_{2k_1+1},p_{2n-2k}}(B)=0$ for $1\le k_1<k\le\lfloor\frac{n-1}{2}\rfloor$,
    \item $F([E_{p_{2n-2k+1},p_{2n-2k_1}},B])=-E^*_{p_{2k_1},p_{2n-2k+1}}(B)=0$ for $1\le k_1<k\le\lfloor\frac{n-1}{2}\rfloor$,
    \item $F([E_{p_{2n-2k+1},p_{2n-2k_1+1}},B])=-E^*_{p_{2k_1+1},p_{2n-2k+1}}(B)=0$ for $1\le k_1<k\le\lfloor\frac{n-1}{2}\rfloor$,
    \item $F([E_{p_{2k},p_i},B])=E^*_{p_i, p_{2n-2k}}(B)+E^*_{p_i, p_{2n}}(B)=0$ for $1\le k\le\lfloor\frac{n-1}{2}\rfloor$ and $2k+1<i<2n-2k$,
    \item $F([E_{p_{2k+1},p_i},B])=E^*_{p_i, p_{2n-2k+1}}(B)+E^*_{p_i, p_{2n}}(B)=0$ for $1\le k\le\lfloor\frac{n-1}{2}\rfloor$ and $2k+1<i<2n-2k$,
    \item $F([E_{p_i, p_{2n}},B])=-\sum_{1\le j\le 2\lceil\frac{n-1}{2}\rceil+1}E^*_{p_j, p_i}(B)=0$ for $2\lceil\frac{n-1}{2}\rceil+1< i<2n$.
\end{itemize}
\textbf{Group 6:}
\begin{itemize}
    \item $F([E_{p_1,p_i},B])=E^*_{p_i,p_i}(B)-E^*_{p_1,p_1}(B)=0$ for $i=2n, 2n+1$,
    \item $F([E_{p_{2k}, p_{2n-2k}},B])=E^*_{p_{2n-2k}, p_{2n-2k}}(B)-E^*_{p_{2k}, p_{2k}}(B)+E^*_{p_{2n-2k}, p_{2n}}(B)=0$ for $1\le k\le\lfloor\frac{n-1}{2}\rfloor$,
    \item $F([E_{p_{2k+1}, p_{2n-2k+1}},B])=E^*_{p_{2n-2k+1}, p_{2n-2k+1}}(B)-E^*_{p_{2k+1}, p_{2k+1}}(B)+E^*_{p_{2n-2k+1}, p_{2n}}(B)=0$ for $1\le k\le\lfloor\frac{n-1}{2}\rfloor$,
    \item $F([E_{p_i, p_{2n}},B])=E^*_{p_{2n}, p_{2n}}(B)-E^*_{p_i,p_i}(B)-\sum_{1\le j<i}E^*_{p_j, p_i}(B)=0$ for $1\le i\le2\lceil\frac{n-1}{2}\rceil+1$.
\end{itemize}
Group 1 immediately implies that 
\begin{equation}\label{eqn:hnbb21}
E^*_{p_1,p_i}(B)=0\text{ for }1<i\le 2n+1.
\end{equation}
Next, Group 2 immediately implies that
\begin{equation}\label{eqn:hnbb22}
E^*_{p_{2k},p_{2n-2k}}(B)=E^*_{p_{2k+1},p_{2n-2k+1}}(B)=0\text{ for }1\le k\le \bigg\lfloor\frac{n-1}{2}\bigg\rfloor.
\end{equation}
Combining the restrictions of Groups 1 and 2 with those of Group 3, we may conclude that 
\begin{equation}\label{eqn:hnbb23}
E^*_{p_i,p_{2n}}(B)=E^*_{p_i,p_{2n+1}}(B)=0\text{ for }1\le i<2n.
\end{equation}
The restrictions of Groups 1 and 3 applied to those of Group 4, allow us to conclude that
\begin{equation}\label{eqn:hnbb24}
E^*_{p_i,p_{2k}}(B)=E^*_{p_i,p_{2k+1}}(B)=0\text{ for }1\le k\le \bigg\lceil\frac{n-1}{2}\bigg\rceil\text{ and }i<2k,2k+1.
\end{equation}
Applying the restrictions of Groups 1, 2, 3, and 4 to the restrictions of Group 5, implies that 
\begin{equation}\label{eqn:hnbb25}
E^*_{p_i,p_{2n-2k}}(B)=E^*_{p_i,p_{2n-2k+1}}(B)=0\text{ for }1\le k\le \bigg\lceil\frac{n-1}{2}\bigg\rceil\text{ and }1\le i<2n-2k.
\end{equation}
Note that the restrictions 1 through 6 of Group 5, when applied with the restrictions of the previous Groups, imply that
\begin{equation}\label{eqn:hnbb26}
E^*_{p_i,p_{2n-2k}}(B)=0\text{ for }1\le k\le \bigg\lceil\frac{n-1}{2}\bigg\rceil\text{ and }1\le i\neq 2k+1<2n-2k
\end{equation}
and
\begin{equation}\label{eqn:hnbb27}
E^*_{p_i,p_{2n-2k+1}}(B)=0\text{ for }1\le k\le \bigg\lceil\frac{n-1}{2}\bigg\rceil\text{ and }1\le i\neq 2k<2n-2k
\end{equation}
Combining restrictions 1 through 6 of Group 5 with restriction 7 we find that 
\begin{equation}\label{eqn:hnbb28}
E^*_{p_{2k+1},p_{2n-2k}}(B)=E^*_{p_{2k},p_{2n-2k+1}}(B)\text{ for }1\le k\le \bigg\lceil\frac{n-1}{2}\bigg\rceil.
\end{equation}
Finally, combining all of the above restrictions with the restrictions of Group 6 implies that
\begin{equation}\label{eqn:hnbb29}
E^*_{p_i,p_i}(B)=E^*_{p_j,p_j}(B)\text{ for }1\le i,j\le 2n+1.
\end{equation}
Since $B\in\mathfrak{sl}(2n+1)$, equation (\ref{eqn:hnbb29}) implies that $E^*_{p_i,p_i}(B)=0$ for all $1\le i\le 2n+1$. Thus, $B=0$ and $\mathfrak{g}_A(\mathcal{P})$ is Frobenius with a choice of Frobenius functional given by $F$.

Given the form of the Frobenius functional $F$, we have that
$F$ satisfies \textbf{(F1)} through \textbf{(F4)} of Definition~\ref{def:toralpair} as follows:
\begin{itemize}
    \item $F$ is clearly small,
    \item $D_{F}(\mathcal{P})=\{p_i~|~1\le i\le 2\lceil\frac{n-1}{2}\rceil+1\}$ forms an ideal in $\mathcal{P}$, $U_{F}(\mathcal{P})=\{p_i~|~2\lceil\frac{n-1}{2}\rceil+1<i\le 2n+1\}$ forms a filter, and $B_{F}(\mathcal{P})=\emptyset$,
    \item $\Gamma_F$ contains the only edges, $(p_1,p_{2n})$ and $(p_1,p_{2n+1})$, between elements of $Ext(\mathcal{P})$, and
    \item equations (\ref{eqn:hnbb21}) through (\ref{eqn:hnbb29}) show that $F$ satisfies \textbf{(F4)} of Definition~\ref{def:toralpair}.
\end{itemize}
It remains to show that \textbf{(P2)} is satisfied; that is, $\mathfrak{g}_A(\mathcal{P})$ has a spectrum consisting of an equal number of 0's and 1's. To determine the spectrum of $\mathfrak{g}_A(\mathcal{P})$, it suffices to calculate $[\widehat{F},x]$ for $x\in\mathscr{B}_{\mathcal{P},F}$. Note that the elements of $\mathscr{B}_{\mathcal{P},F}$ can be partitioned as follows: $$S^1_1=\{E_{p_{2n+1},p_{2n+1}}-E_{p_1,p_1},E_{p_{2n},p_{2n}}-E_{p_1,p_1}\},$$ $$S^2_1=\{E_{p_{2n},p_{2n}}-E_{p_{2k},p_{2k}},E_{p_{2n},p_{2n}}-E_{p_{2k+1},p_{2k+1}}~|~1\le k\le\bigg\lceil\frac{n-1}{2}\bigg\rceil\},$$ $$S^3_1= \{E_{p_{2n-2k},p_{2n-2k}}-E_{p_{2k},p_{2k}},E_{p_{2n-2k+1},p_{2n-2k+1}}-E_{p_{2k},p_{2k}}~|~1\le k\le\bigg\lfloor\frac{n-1}{2}\bigg\rfloor\},$$ $$S_1=S_1^1~\cup~S_1^2~\cup~S_1^3,$$
$$S_2=\{E_{p_i,p_j}~|~1\preceq i\preceq 2\bigg\lceil\frac{n-1}{2}\bigg\rceil+1,2\bigg\lceil\frac{n-1}{2}\bigg\rceil+1\prec j\preceq 2n+1\},$$
$$S_3=\{E_{p_i,p_j}~|~1\preceq i\prec j\preceq 2\bigg\lceil\frac{n-1}{2}\bigg\rceil+1\},$$
$$S_4=\{E_{p_i,p_j}~|~2\bigg\lceil\frac{n-1}{2}\bigg\rceil+1\prec i\prec j\preceq 2n+1\},$$ where $$|S_1|=2n,$$ 

$$|S_2| =  \begin{cases} 
      n^2+n, & n\text{ odd}; \\
      n^2+n, & n\text{ even};
   \end{cases}$$

$$|S_3| =  \begin{cases} 
      \frac{n^2-2n+1}{2}, & n\text{ odd}; \\
      \frac{n^2}{2}, & n\text{ even};
   \end{cases}$$
and
$$|S_4| =  \begin{cases} 
      \frac{n^2-1}{2}, & n\text{ odd}; \\
      \frac{n^2-2n}{2}, & n\text{ even}.
   \end{cases}$$
Furthermore, note that the elements contained in $G_0=S_1\cup S_3\cup S_5$ are eigenvectors of $ad(\widehat{F})$ with eigenvalue 0, while the elements of $G_1=S_2$ are eigenvectors with eigenvalue 1. Therefore,
\[|G_0|=|G_1| =  \begin{cases} 
      n^2+n, & n\text{ odd}; \\
                                                &                        \\
       n^2+n, & n\text{ even}.
   \end{cases}
\]
Therefore, $\mathfrak{g}_A(\mathcal{P})$ has a binary spectrum and $(\mathcal{P},F)$ is a toral-pair.
\end{proof}

\end{document}